\documentclass{amsart}
\usepackage[utf8]{inputenc}
\usepackage[pdftex]{graphicx}
\usepackage{amsfonts,amssymb,verbatim}    
\usepackage{amsthm}
\usepackage{amsmath}
\usepackage{latexsym}
\usepackage{fancyhdr}
\usepackage{nameref}
\usepackage{amsmath,amscd}
\usepackage[all,cmtip]{xy}
\usepackage[letter,center]{crop}
\usepackage{geometry}
\usepackage{multicol}
\usepackage{paralist}
\usepackage{todonotes}
\usepackage{diagrams}

\usepackage{tikz}
\usetikzlibrary{arrows.meta} 
\usetikzlibrary{matrix,arrows}

\usepackage[pagebackref,bookmarksopen]{hyperref}
\hypersetup{
	colorlinks=true,
	linkcolor=blue,
	citecolor=magenta,
	urlcolor=cyan,
}
\usepackage{bookmark} 

\newcommand{\Comment}[1]{}
\newcommand{\N}{\mathbb{N}}
\newcommand{\Z}{\mathbb{Z}}
\newcommand{\R}{\mathbb{R}}
\newcommand{\C}{\mathbb{C}}

\newcommand{\set}[2]{\ensuremath{\{\,{#1}\mid {#2}\,\}}}

\newcommand{\inpr}[3][]{\ensuremath{\langle#2,#3\rangle_{#1}}}
\newcommand{\norm}[1]{\Vert #1\Vert}
\newcommand{\abs}[1]{| #1|}


\DeclareMathOperator{\ind}{Ind}

\DeclareMathOperator{\im}{Im}
\DeclareMathOperator{\re}{Re}

\newcommand{\Ind}[1][]{\ind_{\mathrm{#1}}}
\newtheorem{theorem}{Theorem}
\newtheorem{corollary}[theorem]{Corollary}
\newtheorem{proposition}[theorem]{Proposition}
\newtheorem{lemma}[theorem]{Lemma}

\theoremstyle{definition}
\newtheorem{definition}[theorem]{Definition}

\newtheorem{remark}[theorem]{Remark}

\newtheorem{example}[theorem]{Example}

 \newtheorem{theorem*}{Theorem}
 \newtheorem{corollary*}[theorem*]{Corollary}
 \newtheorem{proposition*}[theorem*]{Proposition}

\title{Indexes of vector fields for mixed functions}
\author{Jos\'e Luis Cisneros-Molina}
\address{Instituto de Matem\'aticas, Unidad Cuernavaca\\ Universidad Nacional Aut\'onoma de M\'exico\\ Avenida Universidad s/n, 
Colonia Lomas de Chamilpa\\ CP62210, Cuernavaca, Morelos Mexico}
\email{jlcisneros@im.unam.mx}
\author{Agust\'in Romano-Vel\'azquez}
\address{Instituto de Matem\'aticas, Unidad Cuernavaca\\ Universidad Nacional Aut\'onoma de M\'exico\\ Avenida Universidad s/n,
Colonia Lomas de Chamilpa\\ CP62210, Cuernavaca, Morelos Mexico}
\email{agustin.romano@im.unam.mx}

\thanks{This research was supported by the projects CONACYT 253506 and UNAM-DGAPA-PAPIIT-IN105121.
The first author was supported by an UNAM-DGAPA-PASPA sabbatical scholarship.
The second author is partially supported by ERCEA 615655 NMST Consolidator Grant, CONACYT CB 2016-1 Num. 286447, FORDECYT 265667, TIFR Visiting Fellow, NKFIH Grants “Elvonal (Frontier)” KKP 126683, KKP 144148 and by Lend\"ulet
30001. The second author thanks Erd\H{o}s Center for its hospitality and for providing a perfect work environment during the semester Singularities and Low Dimensional Topology - 2023 Spring.}
\subjclass[2020]{Primary 14J17, 32C18; Secondary 57R45.}

\begin{document}
\begin{abstract}
A mixed function is a real analytic map $f\colon\C^n\to\C$ in the complex variables $z_1,\dots,z_n$ and their conjugates $\bar{z}_1,\dots,\bar{z}_n$.
In this article we define an integer valued index for vector fields $v$ with isolated singularity at $\mathbf{0}$ on real analytic varieties $V_f:=f^{-1}(0)$ defined by mixed functions $f$ with isolated critical point at 
$\mathbf{0}$. We call this index the \emph{mixed GSV-index} and it generalizes the classical GSV-index defined by Gomez-Mont, Seade and Verjovsky in \cite{Gomez-Mont-Seade-Verjovsky:IHFIS}, i.e., if the function $f$ is holomorphic, then the mixed GSV-index coincides with the GSV-index. Furthermore, the mixed GSV-index is a lifting to $\Z$ of the $\Z_2$-valued real GSV-index defined by Aguilar, Seade and Verjovsky in \cite{Aguilar-Seade-Verjovsky:IVFTIRAS}. As applications we prove that the mixed GSV-index is equal to the Poincar\'e-Hopf index of $v$ on a Milnor fiber. If $f$ also satisfies the strong Milnor condition, i.~e., for every $\epsilon>0$ (small enough) the map $\frac{f}{\|f\|}\colon \mathbb{S}_\epsilon \setminus L_f \to \mathbb{S}^1$ is a fiber bundle, we prove that the mixed GSV-index is equal to the curvatura integra of $f$ defined by Cisneros-Molina, Grulha and Seade in \cite{Cisneros-Grulha-Seade:OTRAM} based on the curvatura integra defined by Kervaire in \cite{M1956}.

\end{abstract}

\maketitle

\section*{Introduction}
Let $v$ be a continuous vector field with isolated singularities on a smooth closed manifold $M$, the classical Poincar\'e-Hopf theorem says that the Poincar\'e-Hopf index
of $v$ is the Euler-Poincar\'e characteristic $\chi(M)$ of $M$.
In the case of a singular variety, an index for vector fields was defined by G\'omez-Mont, Seade and Verjovsky~\cite{Gomez-Mont-Seade-Verjovsky:IHFIS}. This index is called the \emph{GSV-index}
and it is defined for vector fields on a complex hypersurface with an isolated singularity. Later, a GSV-index was defined for vector fields on a real complete intersection with an isolated singularity by Aguilar, Seade and Verjovsky~\cite{Aguilar-Seade-Verjovsky:IVFTIRAS}. This index is called the \emph{real GSV-index}. 
Both versions of the GSV-index are important invariants for singular varieties (see the book \cite{Brasselet-etal:VFSV}).
For example, Klehn~\cite{klehn_2005} relates the GSV-index of a vector field $v$ in a complex isolated complete intersection singularity to the dimension of some vector space. Using this approach Klehn gives a sufficient criterion for the existence of a particular deformation of $v$. Smirnov and Torres~\cite{zbMATH07174536} use the real GSV-index to characterize the existence of a $\text{Spin}(1,3)_0$-Lorentzian cobordism between two closed oriented $3$-manifolds. There are many other related articles such as \cite{Aleksandrov:IVFLDF,Brasselet-etal:ECRFJC,Callejas-Bedregal-etal:LGFFAS,Gomez-Mont:AFIVFHIS,Lehmann-etal:IHVFTSV,Seade:IVFCASS,Seade-Suwa:AFLCI,Seade-Suwa:RFIHF}. 

Let $f\colon \R^{2n} \to \R^2$  be a real analytic map with isolated critical point at the origin. Let $v$ be a continuous vector field on $V_f:=f^{-1}(0)$ with isolated singularity at the origin. Under these assumptions, one has two possibilities:
\begin{enumerate}
    \item If $f$ is holomorphic (under the natural identification $\R^2 \cong \C$), then the GSV-index is an integer.\label{it:holo}
    \item If $f$ is not holomorphic, then the real GSV-index is an integer modulo $2$.\label{it:real}
\end{enumerate}
This follows because the definition of the GSV-index uses the Stiefel manifold of complex frames in \eqref{it:holo} and the Stiefel manifold of real frames in \eqref{it:real}. Thus, there is a topological obstruction to define the real GSV-index as an integer for this type of maps. If the difference of the dimensions of the domain and the target is odd, then the real GSV-index is an integer (see \cite{Aguilar-Seade-Verjovsky:IVFTIRAS}).

A mixed function is a real analytic map $f\colon\C^n\to\C$ in the complex variables $z_1,\dots,z_n$ and their conjugates $\bar{z}_1,\dots,\bar{z}_n$. 
In this article we define an integer valued index for vector fields $v$ on real analytic varieties $V_f:=f^{-1}(0)$ defined by mixed functions called the \emph{mixed GSV-index}. 
The general idea of the construction of the mixed GSV-index is the following: let $\mathbf{z}\in V_f$ be a regular point and consider the vector $v(\mathbf{z})$. We just need to find a  vector field $v^{\perp}$ such that $(v^{\perp}(\mathbf{z}),v(\mathbf{z}))$ is a complex $2$-frame. By obstruction theory, it is easy to prove the existence of such vector field $v^{\perp}$. Furthermore, the election of $v^{\perp}$ can be made such that it is compatible with the real GSV index. Such families of vector fields are called \emph{frame homotopic} to $v$. 

The mixed GSV-index generalizes the classical GSV-index, i.e., if the function $f$ is holomorphic, then the mixed GSV-index coincides with the GSV-index. This is done in Corollary~\ref{cor:MGSVequalGSV}. Furthermore, the real GSV-index is the reduction modulo two of the mixed GSV-index. Hence, the mixed GSV-index is a lifting to $\Z$ of the real GSV-index. This is done in Theorem~\ref{theorem.soniguales}.

Recall that given a complex (real) analytic variety $V_f$ with an isolated singularity at the origin, its \emph{link} $L_f=V_f\cap \mathbb{S}^{2n-1}_\varepsilon$ is a smooth manifold for a sphere $\mathbb{S}^{2n-1}_\varepsilon$ centered at the origin of sufficiently small radious $\varepsilon$.
One has a \emph{Milnor's fibration} (see~\cite{Milnor:SPCH}) whose fiber $F$ can be regarded as a compact $(2n-2)$-manifold with boundary $\partial F=L_f$.
By the Transversal Isotopy Lemma, there is an ambient isotopy on $\mathbb{S}^{2n-1}_\varepsilon$ taking $L_f$ to $\partial F$ and $v$ can be regarded as a non-singular vector field on $\partial F$, which can be extended with isolated singularities to $F$.
It is known that (under some hypothesis) the (real) GSV-index coincide with the (modulo two) Poincar\'e-Hopf index in a Milnor fiber $F$ (see~\cite[\S3.2, \S4.3]{Brasselet-etal:VFSV}). In this article we generalize the previous statement proving that if $f$ is a mixed function, then the mixed GSV-index is equal to the Poincar\'e-Hopf index of $v$
on a Milnor fiber $F$ (Theorem~\ref{theorem.sonigualesPH}). 

The last application of the mixed GSV-index is related with the \emph{cuvatura integra} $\varphi(M)$ of a closed submanifold $M$ of the Euclidean space with trivial normal bundle defined by Kervaire~\cite{M1956}. Given a real analytic map $f\colon\R^n\to\R^k$ with isolated critical point and vector field $v$ on $V_f=f^{-1}(0)$ with isolated singularity at the origin, under some conditions the real GSV-index of $v$ coincides with the cuvatura integra $\varphi(L_f)$ of the link $L_f$ (see~\cite[Theorem~3.3]{Aguilar-Seade-Verjovsky:IVFTIRAS}).
Given a real analytic map $f\colon\R^n\to\R^k$ with isolated critical point such that the map $f/\Vert f\Vert\colon\mathbb{S}^{n-1}_\varepsilon\to\mathbb{S}^{k-1}$ is a fiber bundle, in \cite{Cisneros-Grulha-Seade:OTRAM} Cisneros-Molina, Grulha and Seade defined the curvatura integra $\Psi(f)$ of $f$ in terms of Kervaire's curvatura integra.
In the case that $f$ is a mixed function and $v$ is a vector field on $V_f$ with isolated singularity at the origin that is everywhere transverse to the link $L_f$,
we prove that the mixed GSV-index coincides with the curvatura integra $\Psi(f)$ of $f$ (Corollary~\ref{cor:CurvaturaIntegra}).

The organization of this paper is as follows: In \S~\ref{sec:GSV-Stiefel} we give preliminary results about the Stiefel manifolds, and the real and complex GSV-index. In \S~\ref{sec:mixed} we prove some properties about mixed functions that we will use later. In \S~\ref{sec:TheMGSV} we define the mixed GSV-index. In \S~\ref{Applications} we compare the GSV-index with the real GSV-index, the Poincar\'e-Hopf index and the curvartura integra.

\section*{Notation}
Throughout this article we consider $\C^n$ with coordinates $z_1, \dots, z_n$. Let $\bar{z}_j$ be the complex conjugate of $z_j$. We shall write $z_j = x_j +i y_j$ with $x_j,y_j \in \R$.
To simplify notation we shall write $\mathbf{z}=(z_1,\dots,z_n)$, $\bar{\mathbf{z}}=(\bar{z}_1,\dots,\bar{z}_n)$, 
$(\mathbf{x},\mathbf{y})=(x_1,y_1,\dots,x_n,y_n)$. Note the order of the variables in the latter.
We also denote by $\mathbf{0}$ the origin in $\C^n$ and by $\R^+$ the positive real numbers.  
To every vector $\mathbf{z}\in\C^n$ corresponds the vector $(\mathbf{x},\mathbf{y})\in\R^{2n}$ and \textit{vice versa}.
To simplify notation we shall write $\mathbf{z}_\R=(\mathbf{x},\mathbf{y})$ and $(\mathbf{x},\mathbf{y})_\C=\mathbf{z}$.

Endow $\C^n$ with the standard Hermitian inner product denoted by
$\inpr[\C^n]{-}{-}$.
Consider the standard Euclidean inner product on $\R^{2n}$ denoted by $\inpr[\R^{2n}]{-}{-}$. By the definitions, it is easy to check that
$\inpr[\R^{2n}]{-}{-}=\re\inpr[\C^n]{-}{-}$.

Let $f\colon U \subset\mathbb{R}^n\to\mathbb{R}^k$ be a real analytic function where $U$ is an open neighborhood of the origin in $\mathbb{R}^n$. From now on, we denote by
\begin{equation*}
 V_f:=f^{-1}(0), \quad  V_f^*:=V_f-\mathrm{Sing}\, V_f.
\end{equation*}
Let $\mathbb{S}^{n-1}_\varepsilon\subset\R^n$ be a sphere of  sufficiently small radius $\varepsilon>0$ with center at the origin. We denote the link of $f$ at $\mathbf{0}$ by
\begin{equation*}
    L_f:=V_f\cap\mathbb{S}^{2n-1}_\varepsilon.
\end{equation*} 

\section{GSV-index and real GSV-index}
\label{sec:GSV-Stiefel}
In this section we recall the definition of the GSV-index given in \cite{Gomez-Mont-Seade-Verjovsky:IHFIS} and of the real GSV-index given in \cite{Aguilar-Seade-Verjovsky:IVFTIRAS}. We begin by recalling some properties about Stiefel manifolds.

From now on, the \emph{Stiefel manifold} of real orthonormal $k$-frames in $\R^m$ is denoted by $V(k,m)$. Analogously, the Stiefel manifold of complex orthonormal $k$-frames in $\C^m$ is denoted by $W(k,m)$. Recall that  $V(k,m)$ is $m-k-1$-connected (see~\cite[\S25.6]{Steenrod:TFB})) and $W(k,m)$ is $2(m-k)$-connected (see~\cite[\S25.7]{Steenrod:TFB}), that is,
\begin{equation}\label{eq:pii.wkm}
\pi_i\bigl(W(k,m)\bigr)=\begin{cases}
                  0 & \text{for $i\leq 2(m-k)$,}\\
                  \Z & \text{for $i=2(m-k)+1$,}
                 \end{cases} \quad
\pi_i\bigl(V(k,m)\bigr)=\begin{cases}
                  0 & \text{for $i< m-k$,}\\
                  \Z & \text{if $i=m-k$ is even, or $k=1$,}\\
                  \Z_2 & \text{if $i=m-k$ is odd.}
                 \end{cases}
\end{equation}
The Stiefel manifold $V(k,m)$ fibers over $V(k-1,m)$ with fiber $S^{m-k}$ (see for instance \cite[\S2]{James:SM} or \cite[\S7]{Steenrod:TFB}), that is, we have the fiber bundle
\begin{equation}\label{eq:fib.gen.real}
\begin{gathered}
\xymatrix{\mathbb{S}^{m-k}\cong V(1,m-k+1)\ar@{^{(}->}[r]^(.7){\tilde{\gamma}} & V(k,m)\ar[r]^{\tilde{p}} & V(k-1,m),}\\[5pt]
\tilde{p}(v_1,v_2,\dots,v_k)=(v_2,\dots,v_k)\qquad\text{and}\qquad \tilde{\gamma}(v)=(v,v_2,\dots,v_k).
\end{gathered}
\end{equation}
Analogously, the Stiefel manifold $W(k,m)$ fibers over $W(k-1,m)$ with fiber $S^{2(m-k)+1}$,
\begin{equation}\label{eq:fib.gen}
\begin{gathered}
\xymatrix{\mathbb{S}^{2(m-k)+1}\cong W(1,m-k+1)\ar@{^{(}->}[r]^(.7){\gamma} & W(k,m)\ar[r]^{p} & W(k-1,m),}\\[5pt]
p(v_1,v_2,\dots,v_k)=(v_2,\dots,v_k)\qquad\text{and}\qquad \gamma(v)=(v,v_2,\dots,v_k).
\end{gathered}
\end{equation}
A complex $k$-frame in $\C^m$ determines a real $2k$-frame in $\R^{2m}$ as follows:
\begin{equation}\label{eq:cpl.fr.re.fr}
(v_1,\dots,v_k)\mapsto(v_1,iv_1,\dots,v_k,iv_k).
\end{equation}
If $(v_1,\dots,v_k)$ is orthonormal, then so is $(v_1,iv_1,\dots,v_k,iv_k)$.
Thus, $\eqref{eq:cpl.fr.re.fr}$ gives a embedding
\begin{equation}\label{eq:comp.real}
 q\colon W(k,m)\to V(2k,2m).
\end{equation}

Let $M$ be a $2(m-k)+1$-dimensional closed oriented manifold. Let
\begin{equation*}
 \phi\colon M\to W(k,m),   
\end{equation*}
be a continuous map.
The map $\phi$ has a well defined degree $\deg(\phi)\in\Z$ given by considering the homomorphism induced in homology
\begin{equation*}
\phi_*\colon H_{2(m-k)+1}(M;\Z)\cong\Z\to H_{2(m-k)+1}\bigl(W\bigl(k,m\bigr);\Z\bigr)\cong\Z,
\end{equation*}
the image of the fundamental class $[M]\in H_{2n-3}(M;\Z)$ of $M$ has the form 
\begin{equation*}
\phi_*([M])=\lambda\cdot \gamma_*([\mathbb{S}^{2(m-k)+1}]),
\end{equation*}
for some integer $\lambda$.
The \textit{degree of $\phi$} is given by 
\begin{equation}\label{eq:deg}
\deg(\phi):=\lambda.
\end{equation}
The following proposition gives a geometric interpretation of the degree of the map $\phi$ (compare with \cite[Lemma~2.5]{Aguilar-Seade-Verjovsky:IVFTIRAS}).

\begin{proposition} \label{proposition:mixedGSVobstruction}
Let $M$ be a $2(m-k)+1$-dimensional, closed oriented manifold. 
Let
\begin{equation*}
 \phi \colon M \to W(k,m),   
\end{equation*}
be a continuous map. 
Let $\gamma \colon \mathbb{S}^{2(m-k)+1} \to  W(k,m)$ be the canonical embedding given in \eqref{eq:fib.gen}. 
Then, there exists a  map
\begin{equation*}
\hat{\phi}\colon M \to \mathbb{S}^{2(m-k)+1},
\end{equation*}
unique up to homotopy, such that $\gamma \circ \hat{\phi}$ is homotopic to $\phi$.
\end{proposition}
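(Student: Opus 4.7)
The plan is to exploit the fiber bundle \eqref{eq:fib.gen}, namely $\mathbb{S}^{2(m-k)+1}\xrightarrow{\gamma} W(k,m)\xrightarrow{p} W(k-1,m)$, and deduce the conclusion from a Whitehead-type bijection. Write $n:=2(m-k)+1$; the proposition is equivalent to the assertion that the map $\gamma_{*}\colon[M,\mathbb{S}^{n}]\to[M,W(k,m)]$ induced by post-composition with $\gamma$ is a bijection.

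First I would determine the connectivity of the fiber inclusion $\gamma$. Applying \eqref{eq:pii.wkm} with $k$ replaced by $k-1$ shows that $W(k-1,m)$ is $2(m-k)+2=n+1$-connected, so $\pi_{i}(W(k-1,m))=0$ for $i\leq n+1$. The long exact sequence of the fibration $p$ then gives
\[
0=\pi_{n+1}(W(k-1,m))\to\pi_{n}(\mathbb{S}^{n})\xrightarrow{\gamma_{*}}\pi_{n}(W(k,m))\to\pi_{n}(W(k-1,m))=0,
\]
so $\gamma_{*}$ is an isomorphism on $\pi_{n}$; it is trivially an isomorphism on $\pi_{i}$ for $i<n$ (both groups vanish), and surjective on $\pi_{n+1}$ because $\pi_{n+1}(W(k-1,m))=0$. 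Hence $\gamma$ is an $(n+1)$-equivalence.

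Next, since $M$ is a closed manifold of dimension $n$ it admits a CW structure of dimension $n$, and by the relative Whitehead theorem applied to the $(n+1)$-equivalence $\gamma$, the induced map $\gamma_{*}\colon[M,\mathbb{S}^{n}]\to[M,W(k,m)]$ is a bijection. Surjectivity furnishes the required lift $\hat\phi$ with $\gamma\circ\hat\phi\simeq\phi$, and injectivity yields the uniqueness up to homotopy.

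The step I expect to require the most care is the surjectivity of $\gamma_{*}$ on $\pi_{n+1}$ (responsible for the uniqueness clause rather than mere existence). A more hands-on alternative, bypassing Whitehead, is obstruction theory applied directly to the fibration: the composition $p\circ\phi\colon M\to W(k-1,m)$ is null-homotopic by cellular approximation since $\dim M=n<n+1$ and $W(k-1,m)$ is $(n+1)$-connected, so the homotopy lifting property of $p$ deforms $\phi$ into the fiber $\mathbb{S}^{n}$, and the same argument applied to $M\times I$ (whose dimension is still $\leq n+1$) compares any two such lifts to produce the desired homotopy in $\mathbb{S}^{n}$.
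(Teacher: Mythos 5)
Your argument is correct: the connectivity computation ($W(k-1,m)$ is $(n{+}1)$-connected, hence $\gamma$ is an $(n{+}1)$-equivalence) together with the Whitehead/compression step gives exactly the claimed bijection $[M,\mathbb{S}^{n}]\to[M,W(k,m)]$. The paper supplies no details here — it only defers to \cite[Lemma~2.5]{Aguilar-Seade-Verjovsky:IVFTIRAS} — and the fibration-plus-obstruction-theory argument you give is precisely the standard proof of that cited lemma (there carried out for real Stiefel manifolds), so your proposal takes the intended approach while actually writing it out.
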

\begin{proof} The proof is the same as~\cite[Lemma~2.5]{Aguilar-Seade-Verjovsky:IVFTIRAS}.
\end{proof}	

\begin{remark}\label{rmk:deg.deg}
By Proposition~\ref{proposition:mixedGSVobstruction}, up to homotopy, the map $\phi\colon M\to W\bigl(k,m\bigr)$ can be regarded as a map $\hat{\phi}\colon M\to\mathbb{S}^{2(m-k)+1}$. Its degree $\deg(\hat{\phi})$ is the usual degree of a map between closed oriented manifolds of the same dimension, and, since the homomorphism
$\gamma_*$ induced in homology is an isomorphism we have that $\deg(\phi)=\deg(\hat{\phi})$. Furthermore, let $M_1,\dots,M_r$ be the connected components of $M$ and let $\phi_i=\phi|_{M_i}$ and 
$\hat{\phi}_i=\hat{\phi}|_{M_i}$  for $i=1,\dots,r$. Then $\deg(\phi_i)=\deg(\hat{\phi}_i)$.
\end{remark}

\begin{remark}\label{rmk:deg.v1}
Note that the map $\phi \colon M \to W(k,m)$ has the form $\phi(x)=(v_1(x),\dots,v_k(x))$, where $v_i\colon M\to\mathbb{S}^{2m-1}\subset\C^m$ and $(v_1(x),\dots,v_k(x))$ is a complex orthonormal $k$-frame in $\C^m$.
If we fix $(v_2(x),\dots,v_k(x))$ then $v_1(x)$ is in the unit sphere $\mathbb{S}^{2(m-k)+1}$ in the $m-k+1$-dimensional complex subspace of $\C^m$ orthogonal to the $(k-1)$-dimensional complex subspace of $\C^m$
generated by $v_2(x),\dots,v_k(x)$.  Thus, we can take the map $\hat{\phi}\colon M\to \mathbb{S}^{2(m-k)+1}$ as the map $v_1\colon M\to \mathbb{S}^{2(m-k)+1}$, by \eqref{eq:fib.gen} is clear that $\gamma\circ v_1=\phi$
and by Proposition~\ref{proposition:mixedGSVobstruction} we have $\deg(\phi)=\deg(v_1)$.
\end{remark}

Now let $M$ be a $m-k$-dimensional closed oriented manifold. Let \begin{equation*}
\varphi\colon M\to V(m,k),
\end{equation*}
be a continuous map. We only describe the case that we use, so suppose that $m-k$ is odd.
The map $\varphi$ has a well defined \textit{degree modulo $2$} $\deg_2(\varphi)\in\Z_2$ given by considering the homomorphism induced in homology
\begin{equation*}
\varphi_*\colon H_{m-k}(M;\Z)\cong\Z\to H_{m-k}\bigl(V\bigl(k,m)\bigr);\Z\bigr)\cong\Z_2,
\end{equation*}
and taking
\begin{equation}\label{eq:deg2}
\deg_2(\varphi)=\varphi_*([M])\in\Z_2.
\end{equation}
There is also a result analogous to Proposition~\ref{proposition:mixedGSVobstruction} (see~\cite[Proposition~2.6]{Aguilar-Seade-Verjovsky:IVFTIRAS}). 

\begin{proposition}\label{prop:Vkm.smn}
 Let $M$ be a $m-k$-dimensional, closed oriented manifold. Let
\begin{equation*}
 \varphi \colon M \to V(k,m),   
\end{equation*}
be a continuous map. 
Let $\tilde{\gamma} \colon \mathbb{S}^{m-k} \to  V(k,m)$ be the canonical embedding given in \eqref{eq:fib.gen.real}. 
Then, there exists a map
\begin{equation*}
\hat{\varphi}\colon M \to \mathbb{S}^{m-k},
\end{equation*}
unique up to homotopy, such that $\tilde{\gamma} \circ \hat{\varphi}$ is homotopic to $\varphi$. 
\end{proposition}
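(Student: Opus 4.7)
The plan is to mimic the obstruction-theoretic argument of Proposition~\ref{proposition:mixedGSVobstruction} (equivalently~\cite[Proposition~2.6]{Aguilar-Seade-Verjovsky:IVFTIRAS}), now applied to the real Stiefel fibration~\eqref{eq:fib.gen.real}
\[
\mathbb{S}^{m-k}\xrightarrow{\tilde{\gamma}} V(k,m)\xrightarrow{\tilde{p}} V(k-1,m),
\]
whose base $V(k-1,m)$ is $(m-k)$-connected by~\eqref{eq:pii.wkm}.

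For existence, since $\dim M=m-k$ and $V(k-1,m)$ is $(m-k)$-connected, every continuous map $M\to V(k-1,m)$ is null-homotopic; in particular, $\tilde{p}\circ\varphi$ admits a null-homotopy $H\colon M\times[0,1]\to V(k-1,m)$ with $H_{0}=\tilde{p}\circ\varphi$ and $H_{1}$ constant at a basepoint $x_{0}$. Using the homotopy lifting property of $\tilde{p}$, lift $H$ to $\widetilde{H}\colon M\times[0,1]\to V(k,m)$ with $\widetilde{H}_{0}=\varphi$. The time-$1$ slice $\widetilde{H}_{1}$ then takes values in the single fiber $\tilde{p}^{-1}(x_{0})\cong\mathbb{S}^{m-k}$, so it factors uniquely as $\widetilde{H}_{1}=\tilde{\gamma}\circ\hat{\varphi}$ for a continuous $\hat{\varphi}\colon M\to\mathbb{S}^{m-k}$, and the homotopy $\widetilde{H}$ certifies $\tilde{\gamma}\circ\hat{\varphi}\simeq\varphi$.

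For uniqueness up to homotopy, given two lifts $\hat{\varphi}_{1},\hat{\varphi}_{2}$ with $\tilde{\gamma}\circ\hat{\varphi}_{1}\simeq\tilde{\gamma}\circ\hat{\varphi}_{2}$ in $V(k,m)$, combine the Hopf classification $[M,\mathbb{S}^{m-k}]\cong H^{m-k}(M;\mathbb{Z})$ with the primary-obstruction classification $[M,V(k,m)]\cong H^{m-k}(M;\pi_{m-k}(V(k,m)))$ (valid because $V(k,m)$ is $(m-k-1)$-connected by~\eqref{eq:pii.wkm}) and the surjectivity of $\tilde{\gamma}_{*}$ on $\pi_{m-k}$ from the long exact sequence of~\eqref{eq:fib.gen.real} to identify the classifying classes of $\hat{\varphi}_{1}$ and $\hat{\varphi}_{2}$, yielding $\hat{\varphi}_{1}\simeq\hat{\varphi}_{2}$. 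The hard part, and what differs from Proposition~\ref{proposition:mixedGSVobstruction}, is precisely this uniqueness step: in the complex case $\gamma_{*}$ was an outright isomorphism on $\pi_{2(m-k)+1}$, while here $\tilde{\gamma}_{*}$ is only surjective on $\pi_{m-k}$, so one must carefully track the extra freedom using the long exact sequence together with the $\mathbb{Z}_{2}$-structure of $\pi_{m-k}(V(k,m))$ visible in~\eqref{eq:pii.wkm} (compatibly with the mod-$2$ degree in~\eqref{eq:deg2}), exactly as carried out in~\cite[Proposition~2.6]{Aguilar-Seade-Verjovsky:IVFTIRAS}.
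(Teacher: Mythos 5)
Your existence argument is correct, and since the paper gives no proof of Proposition~\ref{prop:Vkm.smn} at all (it simply defers to \cite[Proposition~2.6]{Aguilar-Seade-Verjovsky:IVFTIRAS}), this is the natural argument to supply: $V(k-1,m)$ is $(m-k)$-connected, $\dim M=m-k$, so $\tilde{p}\circ\varphi$ is null-homotopic, and lifting the null-homotopy through the fibration \eqref{eq:fib.gen.real} pushes $\varphi$ into a single fibre $\mathbb{S}^{m-k}$.

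The genuine gap is the uniqueness step, and it is not one you can close by ``carefully tracking the extra freedom'' as you propose. With the classifications you quote, $\tilde{\gamma}$ induces the map $[M,\mathbb{S}^{m-k}]\cong H^{m-k}(M;\Z)\to[M,V(k,m)]\cong H^{m-k}\bigl(M;\pi_{m-k}(V(k,m))\bigr)$ given by change of coefficients along $\tilde{\gamma}_*\colon\Z\to\pi_{m-k}(V(k,m))$. Surjectivity of $\tilde{\gamma}_*$ gives a second proof of existence, but uniqueness needs injectivity, and in the case the paper actually uses ($m-k$ odd, $k\geq 2$, so $\pi_{m-k}(V(k,m))\cong\Z_2$ by \eqref{eq:pii.wkm}) the kernel of $\tilde{\gamma}_*$ is $2\Z$: in the long exact sequence of \eqref{eq:fib.gen.real} the boundary map $\partial\colon\pi_{m-k+1}(V(k-1,m))\cong\Z\to\pi_{m-k}(\mathbb{S}^{m-k})\cong\Z$ has image $2\Z$. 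Concretely, for $M=\mathbb{S}^{m-k}$ and $\varphi=\tilde{\gamma}$, both the identity and any map of degree $3$ serve as $\hat{\varphi}$, and they are not homotopic; so the literal ``unique up to homotopy'' fails here, and only the residue $\deg(\hat{\varphi})\bmod 2$ is determined by $\varphi$. That mod-$2$ uniqueness is exactly what Remark~\ref{rmk:deg2.deg} and the definition of $\deg_2$ in \eqref{eq:deg2} require, and it is what your cohomological comparison actually proves; integral uniqueness holds only when $\tilde{\gamma}_*$ is injective (e.g.\ $m-k$ even, or $k=1$). You should therefore either state and prove the uniqueness only modulo $2$, or restrict the proposition to the cases where $\tilde{\gamma}_*$ is an isomorphism. (Be aware that this issue propagates: the proof of Lemma~\ref{lema:IndependenciaV} invokes the integral uniqueness of $\hat{\varphi}_2$.)
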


\begin{remark}\label{rmk:deg2.deg}
Up to homotopy, the map $\varphi\colon M\to V\bigl(k,m\bigr)$ can be regarded as a map $\hat{\varphi}\colon M\to\mathbb{S}^{m-k}$.
Its degree $\deg(\hat{\varphi})$ is the usual degree of a map, and, since the homomorphism
$\tilde{\gamma}_*$ induced in homology is the reduction modulo $2$, we have that
\begin{equation*}
    \deg_2(\varphi)=\deg(\hat{\varphi}) \mod{2}.
\end{equation*}
Furthermore, let $M_1,\dots,M_r$ be the connected components of $M$ and let $\varphi_i=\varphi|_{M_i}$ and 
$\hat{\varphi}_i=\hat{\varphi}|_{M_i}$  for $i=1,\dots,r$. Then $\deg(\varphi_i)=\deg(\hat{\varphi}_i)\mod 2$.
\end{remark}

\begin{remark}\label{rmk:deg2.v1}
The map $\varphi \colon M \to V(k,m)$ has the form $\phi(x)=(v_1(x),\dots,v_k(x))$, we can take $\hat{\varphi}\colon M\to \mathbb{S}^{m-k}$ as the map 
$v_1\colon M\to \mathbb{S}^{m-k}$ and by Remark~\ref{rmk:deg2.deg} we have $\deg_2(\varphi)=\deg(v_1)\mod 2$.
\end{remark}

\begin{remark}
In \cite[\S2]{Aguilar-Seade-Verjovsky:IVFTIRAS} $\deg_2(\varphi)$ is defined in an equivalent way, by evaluating a \textit{characteristic element} $u\in H^{m-k}\bigl(V\bigl(k,m\bigr);\Z_2\bigr)$
on $\varphi_*([M])$, that is $\deg_2(\varphi)=\langle u,\varphi_*([M])\rangle$.
\end{remark}

\subsection{The GSV-index}\label{ssec:GSV}
Let $f \colon \mathbb{C}^n \to \mathbb{C}$ be a holomorphic function with an isolated critical point at the origin. Let $L_f$ be the link of $f$. We have that $L_f$ is a compact oriented smooth manifold of dimension $2n-3$.
Also consider a continuous vector field $v$ on $V_f$ with an isolated singularity at the origin.
The (conjugate) of the gradient vector field $\overline{\nabla f}$ is normal to $V_f^*$ for the standard Hermitian 
inner product in $\mathbb{C}^n$. Therefore the ordered set of vectors $(v(\mathbf{z}),\overline{\nabla f(\mathbf{z})})$ is a complex $2$-frame in $\C^n$ at each point $\mathbf{z}$ in $V_f^*$, 
and up to homotopy, it can be assumed to be orthonormal. Hence, we get a continuous map from $V_f^*$ to $W(2,n)$ which we can restrict to the link to obtain a map
\begin{equation}\label{eq:GSV-map.complex}
\tilde{\phi}_v:= (v,\overline{\nabla f}) \colon L_f \to W(2,n).
\end{equation}

\begin{definition}[\cite{Gomez-Mont-Seade-Verjovsky:IHFIS}]
The \textit{GSV-index} $\Ind[GSV](v,\mathbf{0})$ of $v$ is given by $\Ind[GSV](v,\mathbf{0}):=\deg(\tilde{\phi}_v)$.
\end{definition}

Sometimes we shall need to consider the $2$-frame $(v,\overline{\nabla f})$ with the reverse order $(\overline{\nabla f},v)$.
The next Lemma tell us that that both frames give us the same index.

\begin{lemma}
\label{lemma:CambioW}
Let $M$ be a $2n-3$-dimensional, closed oriented manifold.
Let
\begin{equation*}
 \phi_1 \colon M \to W(2,n) \quad \text{and} \quad    \phi_2 \colon M \to W(2,n),
\end{equation*}
be continuous maps given by
\begin{equation*}
    \phi_1(m)=(v_1(m), v_2(m)) \quad \text{and} \quad \phi_2(m)=(v_2(m), v_1(m)).
\end{equation*}
Then, $\deg(\phi_1)=\deg(\phi_2).$
\end{lemma}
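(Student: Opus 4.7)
The plan is to prove that $\phi_1$ and $\phi_2$ are in fact homotopic as maps $M\to W(2,n)$; the equality of degrees then follows automatically from the expression $\phi_{i*}([M])=\deg(\phi_i)\cdot\gamma_*([\mathbb{S}^{2n-3}])$ in \eqref{eq:deg}. Since $\phi_2=\sigma\circ\phi_1$, where $\sigma\colon W(2,n)\to W(2,n)$ is the swap map $(w_1,w_2)\mapsto(w_2,w_1)$, it suffices to exhibit a homotopy from $\sigma$ to the identity of $W(2,n)$.

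I would build such a homotopy in two stages. First, consider the real rotation
\[
H_t(w_1,w_2)=(\cos t\, w_1+\sin t\, w_2,\;-\sin t\, w_1+\cos t\, w_2),\qquad t\in[0,\pi/2].
\]
A short calculation with the Hermitian inner product, using $\langle w_1,w_2\rangle=0$ and $\|w_i\|=1$, shows that $H_t$ takes values in $W(2,n)$ for every $t$; at the endpoints $H_0=\id$ and $H_{\pi/2}$ equals the map $R(w_1,w_2)=(w_2,-w_1)$. Second, the phase rotation $K_s(w_1,w_2)=(w_1,e^{i\pi s}w_2)$, $s\in[0,1]$, is obviously in $W(2,n)$ throughout and is a homotopy from $\id$ to $\tau(w_1,w_2)=(w_1,-w_2)$. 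Since $\sigma=\tau\circ R$, concatenating the two homotopies gives $\sigma\simeq\id$, hence $\phi_2\simeq\phi_1$ and so $\deg(\phi_1)=\deg(\phi_2)$.

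The only step that needs genuine care is verifying that the first homotopy $H_t$ really lands in the complex Stiefel manifold rather than only the real one: mixing real trigonometric coefficients with complex vectors is exactly where sign and conjugation conventions tend to bite. Once this routine check is made, the rest of the argument is formal and works uniformly for all $n\geq 2$.
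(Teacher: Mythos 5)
Your proof is correct and is essentially the paper's argument: both reduce to showing the swap map on $W(2,n)$ is homotopic to the identity, which the paper does by invoking path-connectedness of $\mathrm{U}(2)$ to join $\mathrm{Id}_2$ to the permutation matrix $P$, while your rotation $H_t$ followed by the phase rotation $K_s$ is precisely an explicit such path (right multiplication by $\begin{pmatrix}\cos t & -\sin t\\ \sin t & \cos t\end{pmatrix}$ and then by $\mathrm{diag}(1,e^{i\pi s})$, whose endpoints compose to $P$). Your verification that $H_t$ stays in the complex Stiefel manifold is the routine check you flag, and it goes through.
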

\begin{proof}
    Consider the following map,
    \begin{align*}
        \lambda \colon W(2,n) &\to W(2,n) \\
        (v_1,v_2) &\mapsto (v_2,v_1)
    \end{align*}
Since $\phi_2=\lambda \circ \phi_1$, it is enough to prove that $\lambda$ is homotopic to the identity. We prove this assertion. Denote by $\mathrm{Id}_{n-2}$ the identity matrix of order $n-2$,
\begin{equation*}
    P=\begin{pmatrix}
0 & 1 \\
1 & 0
\end{pmatrix} \quad \text{and} \quad \Lambda=\begin{pmatrix}
P & 0 \\
0 & \mathrm{Id}_{n-2}
\end{pmatrix}.
\end{equation*}
Note that multiplication on the right by $\Lambda$ gives a map from $\mathrm{U}(n)$ to itself. As usual, denote by $\{e_1,\dots,e_n\}$ the standar complex basis of $\C^n$. The following map
\begin{align*}
    \alpha\colon  \mathrm{U}(n) &\to W(2,n) \\
    U &\mapsto \{Ue_1,Ue_2\}
\end{align*}
induces the following conmutative diagram
\begin{equation}
\label{eq:ChangeCoef.1}
\begin{diagram}[width=7em]
 \mathrm{U}(n) &\rTo^{\alpha}& W(2,n)\\
 \dTo^{\Lambda} &\circlearrowleft&\dTo^{\lambda}\\
\mathrm{U}(n) &\rTo^{\alpha}& W(2,n)
\end{diagram}
\end{equation}
Since $P \in \mathrm{U}(2)$ and $\mathrm{U}(2)$ is path-connected, we can find $r\colon [0,1] \to \mathrm{U}(2)$ a continuous path from the identiy matrix to $P$, i.e., $r(0)=\mathrm{Id}_2$ and $r(1)=P$.
Using the path $r$, we have the following homotopy from the identity to the map given by the multiplication by $\Lambda$
\begin{align*}
    H\colon \mathrm{U}(n) \times [0,1] &\to \mathrm{U}(n) \\
    (U,t) &\mapsto U\begin{pmatrix}
r(t) & 0 \\
0 & \mathrm{Id}_{n-2}
\end{pmatrix}
\end{align*}
Since $\mathrm{U}(n)/\mathrm{U}(n-2) \cong W(2,n)$ and the homotopy $H$ is the idendity in the last $n-2$ columns of any matrix $U \in \mathrm{U}(n)$. Then, $H$ induces a homotopy between
$\lambda$ and the identity map. This proves the lemma.
\end{proof}

\subsection{The real GSV-index}
Let $(V, \mathbf{0})$ be the germ of a complete intersection with an isolated singularity at the origin, defined by a real analytic function
\begin{equation*}
f=(f_1,f_2)\colon U \subset \R^{2n} \to \R^2,
\end{equation*}
where $U$ is an open neighborhood of the origin in $\R^{2n}$. Let $L_f$ be the link of $f$ at $0$. We have that $L_f$ is a compact oriented smooth manifold of dimension $2n-3$. Let $v$ be a continuous vector field on $V_f$ which is singular only at the origin. Let $(\nabla {f_1},\nabla {f_2})$ be the gradient vector fields of $(f_1,f_2)$. Since $V_f$ is an  isolated complete intersection singularity, the vectors $\{\nabla {f_1},\nabla {f_2}\}$
are linearly independent everywhere on $V_f^*$. Hence, up to homotopy, one has a continuous map 
\begin{equation}\label{eq:GSV-map.real}
\tilde{\varphi}_v:= (v,\nabla {f_1},\nabla {f_2}) \colon L_f \to V(3,2n).
\end{equation}

\begin{definition}[\cite{Aguilar-Seade-Verjovsky:IVFTIRAS}]
The \emph{real GSV-index} of $v$ at the origin is given by
\begin{equation*}
\Ind[\R GSV](v,0):=\deg_2(\tilde{\varphi}_v).
\end{equation*}
Let $L_1,\dots,L_r$ be the connected components of the link $L_f$. The \textit{multi-index} $\Ind[\R GSV]^{\mathrm{multi}}(v,\mathbf{0})$ of the vector field $v$ is defined in~\cite{Aguilar-Seade-Verjovsky:IVFTIRAS} by
\begin{equation*}
\Ind[\R GSV]^{\mathrm{multi}}(v,\mathbf{0}):=\left(\deg_2(\tilde{\varphi}_{v_1}),\dots,\deg_2(\tilde{\varphi}_{v_r})\right),
\end{equation*}
where $\tilde{\varphi}_{v_j}$ is the restriction of $\tilde{\varphi}_v$ to the component $L_j$. Notice that one has
\begin{equation*}
\Ind[\R GSV](v,\mathbf{0})=\deg_2(\tilde{\varphi}_{v_1})+\dots+\deg_2(\tilde{\varphi}_{v_r}).
\end{equation*}
\end{definition}
For a complete discussion see \cite{Aguilar-Seade-Verjovsky:IVFTIRAS} or \cite[Chapter~4]{Brasselet-etal:VFSV}.

\section{Mixed functions}
\label{sec:mixed}
Let $\mu=(\mu_1,\dots,\mu_n)$ and $\nu=(\nu_1,\dots,\nu_n)$ with $\mu_j, \nu_j \in \N\cup\{0\}$. Set $\mathbf{z}^\mu=z_1^{\mu_1}\dots z_n^{\mu_n}$ and 
$\bar{\mathbf{z}}^\nu=\bar{z}_1^{\nu_1}\dots\bar{z}_n^{\nu_n}$. 
Consider a complex valued function $f\colon\mathbb{C}^n\to\mathbb{C}$ expanded in a convergent power series of variables $\mathbf{z}$ and $\bar{\mathbf{z}}$,
\begin{equation*}
f(\mathbf{z})=\sum_{\mu,\nu}c_{\mu,\nu}\mathbf{z}^{\mu}\bar{\mathbf{z}}^{\nu}.
\end{equation*}
We call $f$ a \emph{mixed analytic function} (or a \emph{mixed polynomial}, if $f$ is a polynomial). 

We consider $f$ as a function $f\colon\R^{2n}\to\R^2$ in the $2n$ real variables $(\mathbf{x},\mathbf{y})$ writing 
\begin{equation}\label{eq:f.re.im}
f(\mathbf{z}_\R)=f(\mathbf{x},\mathbf{y})= g(\mathbf{x},\mathbf{y}) + i h(\mathbf{x},\mathbf{y}),
\end{equation}
where $g,h\colon \C^n \cong \mathbb{R}^{2n} \to \mathbb{R}$ are real analytic functions. 
Let $\bar{f}$ be the conjugate of $f$, i.e.,
\begin{equation*}
\bar{f}(\mathbf{z})=\sum_{\mu,\nu}\bar{c}_{\mu,\nu}\bar{\mathbf{z}}^{\mu}\mathbf{z}^{\nu}.
\end{equation*}
Hence we can get $g$ and $h$ by
\begin{equation}\label{eq:gyh}
g=\frac{1}{2}\left(f+\bar{f}\right),\qquad\text{and}\qquad h=\frac{1}{2i}\left(f-\bar{f}\right).
\end{equation}
As usual, we define the real gradients of $g$ and $h$ by 
\begin{equation}\label{eq:nablas}
\nabla g = \left (\frac{\partial g}{\partial x_1},\frac{\partial g}{\partial y_1}, \dots, \frac{\partial g}{\partial x_n},\frac{\partial g}{\partial y_n}\right),\qquad 
\nabla h = \left (\frac{\partial h}{\partial x_1},\frac{\partial h}{\partial y_1}, \dots, \frac{\partial h}{\partial x_n},\frac{\partial h}{\partial y_n}\right).
\end{equation}
Hence, the differential of $f$ at a point $(\mathbf{x},\mathbf{y})$ is given by the real $2\times 2n$ matrix
\begin{equation}\label{eq:differential}
Df_{(\mathbf{x},\mathbf{y})}=\begin{pmatrix}
                                            \nabla g(\mathbf{x},\mathbf{y})\\
                                            \nabla h(\mathbf{x},\mathbf{y})
                                           \end{pmatrix}=\begin{pmatrix}
                              \frac{\partial g}{\partial x_1}(\mathbf{x},\mathbf{y}) & \frac{\partial g}{\partial y_1}(\mathbf{x},\mathbf{y}) & \dots & \frac{\partial g}{\partial x_n}(\mathbf{x},\mathbf{y}) & \frac{\partial g}{\partial y_n}(\mathbf{x},\mathbf{y})\\[5pt]
                              \frac{\partial h}{\partial x_1}(\mathbf{x},\mathbf{y}) & \frac{\partial h}{\partial y_1}(\mathbf{x},\mathbf{y}) & \dots & \frac{\partial h}{\partial x_n}(\mathbf{x},\mathbf{y}) & \frac{\partial h}{\partial y_n}(\mathbf{x},\mathbf{y})
                             \end{pmatrix}.
\end{equation}
Recall that for any real analytic function $k\colon\R^{2n}\to\R$ we have
\begin{equation}\label{eq:wirtinger}
\frac{\partial k}{\partial z_j}=\frac{1}{2} \left (\frac{\partial k}{\partial x_j} - i\frac{\partial k}{\partial y_j} \right ), \qquad
\frac{\partial k}{\partial \bar{z}_j}=\frac{1}{2}\left(\frac{\partial k}{\partial x_j} + i\frac{\partial k}{\partial y_j} \right ).
\end{equation}
We also have
\begin{equation}\label{eq:partial.f.z}
\frac{\partial f}{\partial z_j}=\frac{\partial g}{\partial z_j} + i\frac{\partial h}{\partial z_j}\;, \qquad 
\frac{\partial f}{\partial \bar{z}_j}=\frac{\partial g}{\partial \bar{z}_j} + i\frac{\partial h}{\partial \bar{z}_j}.
\end{equation}
Using \eqref{eq:partial.f.z} and \eqref{eq:wirtinger} we get
\begin{equation}\label{eq:df.dg.dh}
\frac{\partial f}{\partial z_j}=\frac{1}{2}\biggl(\frac{\partial g}{\partial x_j}+\frac{\partial h}{\partial y_j} + i \Bigl(\frac{\partial h}{\partial x_j}-\frac{\partial g}{\partial y_j}\Bigr)\biggr),\qquad
\frac{\partial f}{\partial \bar{z}_j}=\frac{1}{2}\biggl(\frac{\partial g}{\partial x_j}-\frac{\partial h}{\partial y_j} + i \Bigl(\frac{\partial h}{\partial x_j}+\frac{\partial g}{\partial y_j}\Bigr)\biggr).
\end{equation}
From the definition it is easy to see that
\begin{equation}\label{eq:partials}
 \frac{\partial \bar{f}}{\partial z_i}=\overline{\frac{\partial f}{\partial \bar{z}_i}},\qquad\text{and}\qquad\frac{\partial \bar{f}}{\partial \bar{z}_i}=\overline{\frac{\partial f}{\partial z_i}}.
\end{equation}
Adding and substracting equations \eqref{eq:wirtinger} we get
\begin{equation}\label{eq:wirt.inv}
\frac{\partial k}{\partial x_j}=\frac{\partial k}{\partial z_j}+\frac{\partial k}{\partial \bar{z}_j},\qquad 
\frac{\partial k}{\partial y_j}=i\biggl(\frac{\partial k}{\partial z_j}-\frac{\partial k}{\partial \bar{z}_j}\biggr).
\end{equation}
Using \eqref{eq:gyh}, \eqref{eq:partials} and \eqref{eq:wirt.inv} for $j=1,\dots,n$ we obtain
\begin{align*}
\frac{\partial g}{\partial x_j}&=\re\biggl(\overline{\frac{\partial f}{\partial z_j}}+\frac{\partial f}{\partial \bar{z}_j}\biggr), &
\frac{\partial g}{\partial y_j}&=\im\biggl(\overline{\frac{\partial f}{\partial z_j}}+\frac{\partial f}{\partial \bar{z}_j}\biggr), \\
\frac{\partial h}{\partial x_j}&=\re\Biggl(i\biggl(\overline{\frac{\partial f}{\partial z_j}}-\frac{\partial f}{\partial \bar{z}_j}\biggr)\Biggr), &
\frac{\partial h}{\partial y_j}&=\im\Biggl(i\biggl(\overline{\frac{\partial f}{\partial z_j}}-\frac{\partial f}{\partial \bar{z}_j}\biggr)\Biggr). 
\end{align*}
Hence, to the vectors $\nabla g(\mathbf{x},\mathbf{y})$ and $\nabla h(\mathbf{x},\mathbf{y})$ in $\R^{2n}$ given in \eqref{eq:nablas}, correspond the vectors in $\C^n$
\begin{align}
\nabla g(\mathbf{z})&=\nabla g(\mathbf{x},\mathbf{y})_\C=\Biggl(\overline{\frac{\partial f}{\partial z_1}}+\frac{\partial f}{\partial \bar{z}_1},\dots,\overline{\frac{\partial f}{\partial z_n}}+\frac{\partial f}{\partial \bar{z}_n}\Biggr),\label{eq:nablag.z}\\
\nabla h(\mathbf{z})&=\nabla h(\mathbf{x},\mathbf{y})_\C=\Biggl(i\biggl(\overline{\frac{\partial f}{\partial z_1}}-\frac{\partial f}{\partial \bar{z}_1}\biggr),\dots,i\biggl(\overline{\frac{\partial f}{\partial z_n}}-\frac{\partial f}{\partial \bar{z}_n}\biggr)\Biggr).\label{eq:nablah.z}
\end{align}
Following Oka \cite{Oka:TPWHH} set
\begin{equation*}
\mathrm{d} f(\mathbf{z})= \left ( \frac{\partial f}{\partial z_1}(\mathbf{z}), \dots , \frac{\partial f}{\partial z_n}(\mathbf{z}) \right )\;, \quad \quad \bar{\mathrm{d}}f(\mathbf{z})= \left ( 
\frac{\partial 
f}{\partial \bar{z}_1}(\mathbf{z}), \dots , \frac{\partial f}{\partial \bar{z}_n}(\mathbf{z}) \right )\;.
\end{equation*}
From \eqref{eq:nablag.z} and \eqref{eq:nablah.z} it is easy to see that
\begin{equation}\label{eq:nabla.d}
\nabla g(\mathbf{z})=\overline{\mathrm{d}f(\mathbf{z})}+\bar{\mathrm{d}}f(\mathbf{z}),\qquad\nabla h(\mathbf{z})=i\overline{\mathrm{d}f(\mathbf{z})}-i\bar{\mathrm{d}}f(\mathbf{z}).
\end{equation}
From \eqref{eq:nabla.d} we get
\begin{equation}
\label{eq:df.usando.g.h}
\overline{\mathrm{d}f(\mathbf{z})}=\frac{1}{2}\bigl(\nabla g(\mathbf{z})-i\nabla h(\mathbf{z})\bigr),\qquad \bar{\mathrm{d}}f(\mathbf{z})=\frac{1}{2}\bigl(\nabla g(\mathbf{z})+i\nabla h(\mathbf{z})\bigr).
\end{equation}
From \eqref{eq:nabla.d} the following proposition is immediate.

\begin{proposition}\label{prop:cplx.partials}
Let $f=g+ih\colon\C^n\to\C$ be a mixed function. Then
\begin{enumerate}[1.]
 \item The vector $\mathrm{d}f(\mathbf{z})$ has all components equal to zero if and only if $\nabla h(\mathbf{z})=-i\bar{\mathrm{d}}f(\mathbf{z})=-i\nabla g(\mathbf{z})$.\label{it:df=0}
 \item The vector $\bar{\mathrm{d}}f(\mathbf{z})$ has all components equal to zero if and only if $\nabla h(\mathbf{z})=i\overline{\mathrm{d}f(\mathbf{z})}=i\nabla g(\mathbf{z})$.\label{it:bdf=0}
\end{enumerate}
\end{proposition}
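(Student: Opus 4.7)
The plan is to deduce the proposition directly from the two identities recorded in \eqref{eq:nabla.d}, namely
\begin{equation*}
\nabla g(\mathbf{z})=\overline{\mathrm{d}f(\mathbf{z})}+\bar{\mathrm{d}}f(\mathbf{z}),\qquad \nabla h(\mathbf{z})=i\overline{\mathrm{d}f(\mathbf{z})}-i\bar{\mathrm{d}}f(\mathbf{z}).
\end{equation*}
Each statement is an ``iff'' between the vanishing of one of the Wirtinger covectors and a linear relation involving $\nabla g$ and $\nabla h$. The strategy is just to substitute the hypothesis into these two formulas and solve; no further geometric input is needed.

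For part \eqref{it:df=0}, I would first run the forward direction: assuming $\mathrm{d}f(\mathbf{z})=0$, conjugation gives $\overline{\mathrm{d}f(\mathbf{z})}=0$, and the two formulas collapse to $\nabla g(\mathbf{z})=\bar{\mathrm{d}}f(\mathbf{z})$ and $\nabla h(\mathbf{z})=-i\bar{\mathrm{d}}f(\mathbf{z})$. Combining these yields $\nabla h(\mathbf{z})=-i\bar{\mathrm{d}}f(\mathbf{z})=-i\nabla g(\mathbf{z})$, which is the asserted chain of equalities. For the converse, I would substitute the hypothesis $\nabla h(\mathbf{z})=-i\bar{\mathrm{d}}f(\mathbf{z})$ into the second identity of \eqref{eq:nabla.d} to obtain $-i\bar{\mathrm{d}}f(\mathbf{z})=i\overline{\mathrm{d}f(\mathbf{z})}-i\bar{\mathrm{d}}f(\mathbf{z})$, which forces $\overline{\mathrm{d}f(\mathbf{z})}=0$ and hence $\mathrm{d}f(\mathbf{z})=0$.

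Part \eqref{it:bdf=0} is entirely analogous, swapping the roles of $\mathrm{d}f(\mathbf{z})$ and $\bar{\mathrm{d}}f(\mathbf{z})$: assuming $\bar{\mathrm{d}}f(\mathbf{z})=0$ reduces \eqref{eq:nabla.d} to $\nabla g(\mathbf{z})=\overline{\mathrm{d}f(\mathbf{z})}$ and $\nabla h(\mathbf{z})=i\overline{\mathrm{d}f(\mathbf{z})}$, giving the stated identities; and conversely, substituting $\nabla h(\mathbf{z})=i\overline{\mathrm{d}f(\mathbf{z})}$ into the second formula of \eqref{eq:nabla.d} forces $\bar{\mathrm{d}}f(\mathbf{z})=0$.

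There is no real obstacle here: the proposition is a bookkeeping consequence of \eqref{eq:nabla.d}, which is why the text calls it ``immediate.'' The only thing to be careful about is keeping track of conjugates and of the factors of $\pm i$ so that, in the forward direction, the statement is written as the full chain of three equal expressions ($\nabla h=\pm i\bar{\mathrm{d}}f=\pm i\nabla g$ or $\nabla h=\pm i\overline{\mathrm{d}f}=\pm i\nabla g$) that the proposition requires.
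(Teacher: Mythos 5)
Your proposal is correct and is exactly the argument the paper intends: the paper dismisses the proposition as ``immediate'' from \eqref{eq:nabla.d}, and your substitution in both directions is precisely that bookkeeping, carried out explicitly. Nothing is missing, and the converse directions legitimately use only part of the stated chain of equalities.
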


The following proposition is a generalization of \cite[Proposition~1]{Oka:TPWHH}.

\begin{proposition}\label{prop:df.bdf.ng.nh}
Let $f=g+ih\colon\C^n\to\C$ be a mixed function. The following conditions are equivalent:
\begin{enumerate}
 \item The vectors $\overline{\mathrm{d}f(\mathbf{z})}$ and $\bar{\mathrm{d}}f(\mathbf{z})$ are linearly dependent over the complex numbers.\label{it:df.bdf.ld}
 \item The vectors $\nabla g(\mathbf{z})$, $\nabla h(\mathbf{z})$ and $i\nabla h(\mathbf{z})$ are linearly dependent over the real numbers.\label{it:ng.nh.inh}
 \item The vectors $\nabla g(\mathbf{z})$ and $\nabla h(\mathbf{z})$ are linearly dependent over the complex numbers.\label{it:ng.nh.cld}
 \item The vectors $\nabla h(\mathbf{z})$, $\nabla g(\mathbf{z})$ and $i\nabla g(\mathbf{z})$ are linearly dependent over the real numbers.\label{it:nh.ng.ing}
\end{enumerate}
\end{proposition}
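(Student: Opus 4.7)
The plan is to exploit the two $\C$-linear changes of basis encoded in \eqref{eq:nabla.d} and \eqref{eq:df.usando.g.h}, together with a single linear algebra observation about the interplay between $\C$- and $\R$-linear dependence in $\C^n$.

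First I would establish $(1)\Leftrightarrow(3)$. The identities \eqref{eq:nabla.d} and \eqref{eq:df.usando.g.h} display the pair $\{\nabla g(\mathbf{z}),\nabla h(\mathbf{z})\}$ and the pair $\{\overline{\mathrm{d}f(\mathbf{z})},\bar{\mathrm{d}}f(\mathbf{z})\}$ as $\C$-linear combinations of each other by the matrix $\bigl(\begin{smallmatrix}1 & 1\\ i & -i\end{smallmatrix}\bigr)$, which has determinant $-2i\neq 0$. Hence the two pairs span the same complex subspace of $\C^n$ and have the same $\C$-rank, so one pair is $\C$-linearly dependent iff the other is.

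Next I would prove $(3)\Leftrightarrow(2)$ and $(3)\Leftrightarrow(4)$ from the general fact: for $u,v\in\C^n$, the pair $u,v$ is $\C$-linearly dependent if and only if $u$ lies in the real span of $\{v,iv\}$ (the case $v=0$ being trivial on both sides, and likewise with the roles of $u,v$ swapped). One direction is immediate: if $u=\lambda v$ with $\lambda=c_1+ic_2\in\C$, then $u=c_1 v+c_2(iv)$, exhibiting an $\R$-linear dependence among $u,v,iv$. For the converse, an $\R$-relation $\alpha u+\beta v+\gamma(iv)=0$ with $(\alpha,\beta,\gamma)\neq 0$ regroups as $\alpha u+(\beta+i\gamma)v=0$; if $\alpha=0$ then $(\beta+i\gamma)v=0$ with $\beta+i\gamma\neq 0$ forces $v=0$ (the trivial case), and otherwise $u=-(\beta+i\gamma)/\alpha\cdot v$, giving $\C$-linear dependence. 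Applying this with $(u,v)=(\nabla g(\mathbf{z}),\nabla h(\mathbf{z}))$ yields $(3)\Leftrightarrow(2)$, and with $(u,v)=(\nabla h(\mathbf{z}),\nabla g(\mathbf{z}))$ yields $(3)\Leftrightarrow(4)$.

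There is no serious obstacle: the proof is a bookkeeping exercise once \eqref{eq:nabla.d} and \eqref{eq:df.usando.g.h} are in hand. The only mildly delicate point is the degenerate case when $\nabla g(\mathbf{z})=0$ or $\nabla h(\mathbf{z})=0$, in which all of the stated dependencies hold trivially and must be flagged separately; otherwise the cycle $(1)\Leftrightarrow(3)\Leftrightarrow(2)$ and $(3)\Leftrightarrow(4)$ closes up.
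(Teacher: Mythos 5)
Your proof is correct, but it is organized quite differently from the paper's. The paper establishes the cycle $\eqref{it:df.bdf.ld}\Rightarrow\eqref{it:ng.nh.inh}\Rightarrow\eqref{it:ng.nh.cld}\Rightarrow\eqref{it:nh.ng.ing}\Rightarrow\eqref{it:df.bdf.ld}$: for $\eqref{it:df.bdf.ld}\Rightarrow\eqref{it:ng.nh.inh}$ it writes $\overline{\mathrm{d}f(\mathbf{z})}=\alpha\bar{\mathrm{d}}f(\mathbf{z})$ with $\alpha=a+ib$, expands into Oka's real equations \eqref{eq:eq1}--\eqref{eq:eq2} and solves them to obtain the explicit relation \eqref{eq:ng.nh.inh}; for $\eqref{it:nh.ng.ing}\Rightarrow\eqref{it:df.bdf.ld}$ it substitutes into \eqref{eq:df.usando.g.h} and splits into cases according to whether $(1-d)+ci$ vanishes, invoking Proposition~\ref{prop:cplx.partials}. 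You instead get $\eqref{it:df.bdf.ld}\Leftrightarrow\eqref{it:ng.nh.cld}$ in one stroke from the invertibility of the matrix $\bigl(\begin{smallmatrix}1&1\\ i&-i\end{smallmatrix}\bigr)$ relating the two pairs via \eqref{eq:nabla.d} and \eqref{eq:df.usando.g.h}, and you deduce \eqref{it:ng.nh.inh} and \eqref{it:nh.ng.ing} from \eqref{it:ng.nh.cld} by a single reusable observation; the paper's middle implications are in fact special cases of that observation. Your route is shorter and avoids all coordinate computations. What the paper's computation buys is the explicit coefficients in \eqref{eq:ng.nh.inh} and \eqref{eq:df.bdf.li}, which are what allow the authors to read off Oka's criterium (Proposition~\ref{lema oka}) immediately afterwards --- the case $\abs{\alpha}=1$ is precisely the vanishing of the $i\nabla h(\mathbf{z})$-coefficient in \eqref{eq:ng.nh.inh} --- whereas your rank argument does not track these constants. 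One small wording issue: your key fact as stated (``$u,v$ are $\C$-dependent iff $u$ lies in the real span of $\{v,iv\}$'') fails when $v=0$ and $u\neq 0$; what your argument actually proves, and what condition \eqref{it:ng.nh.inh} of the proposition requires, is the equivalence with $\R$-linear dependence of the \emph{triple} $u,v,iv$, and with that reading your treatment of the degenerate cases is correct.
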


\begin{proof}
\paragraph{$\eqref{it:df.bdf.ld}\Rightarrow\eqref{it:ng.nh.inh}$}
If $\mathrm{d}f(\mathbf{z})=0$ or $\bar{\mathrm{d}}f(\mathbf{z}) = 0$, then by \eqref{eq:df.usando.g.h} we are done.
Thus, we may assume  $\overline{\mathrm{d}f(\mathbf{z})}$ and $\bar{\mathrm{d}}f(\mathbf{z})$ are different from zero. By hypothesis, the vectors are linearly dependent over the complex numbers, hence there exists a complex number $\alpha =a+ib$ such that
\begin{equation*}
 \overline{\mathrm{d}f(\mathbf{z})}=\alpha\bar{\mathrm{d}}f(\mathbf{z}).
\end{equation*}
Using \eqref{eq:df.dg.dh} in this equality we get for each $j=1,\dots,n$
the following pair of equations obtained by Oka in the proof of \cite[Proposition~1]{Oka:TPWHH}:
\begin{align}
(1-a)\frac{\partial g}{\partial x_j} + b \frac{\partial g}{\partial y_j} &= -b \frac{\partial h}{\partial x_j} - (1+a) \frac{\partial h}{\partial y_j},\label{eq:eq1}\\
-b \frac{\partial g}{\partial x_j} + (1-a)\frac{\partial g}{\partial y_j}&= (a+1)\frac{\partial h}{\partial x_j} - b \frac{\partial h}{\partial y_j}.\label{eq:eq2}
\end{align}
Solving \eqref{eq:eq1} and \eqref{eq:eq2} assuming that $\alpha\neq1$ we get
\begin{equation}\label{eq:ng.nh.inh}
\nabla g(\mathbf{z})=\frac{-2b}{(1-a)^2+b^2}\nabla h(\mathbf{z})+\frac{(1-\abs{\alpha}^2)}{(1-a)^2+b^2}i\nabla h(\mathbf{z}).
\end{equation}
If $\alpha=1$, that is, $a=1$ and $b=0$, equations \eqref{eq:eq1} and \eqref{eq:eq2} imply that $\nabla h=0$ and the linear dependence is obvious.

\paragraph{$\eqref{it:ng.nh.inh}\Rightarrow\eqref{it:ng.nh.cld}$}
By hypothesis, there exist real numbers $t_1,t_2$ and $t_3$, not all zero, such that
\begin{equation*}
t_1\nabla g(\mathbf{z})+t_2\nabla h(\mathbf{z})+t_3i\nabla h(\mathbf{z})=0.
\end{equation*}
Let $\alpha=t_2+it_3$. Under this identification, the previous equality is
\begin{equation*}
t_1\nabla g(\mathbf{z})+\alpha\nabla h(\mathbf{z})=0.
\end{equation*}
Therefore, the vectors $\nabla g(\mathbf{z})$ and $\nabla h(\mathbf{z})$ are linearly dependent over the complex numbers.

\paragraph{$\eqref{it:ng.nh.cld}\Rightarrow\eqref{it:nh.ng.ing}$}
If $\nabla h(\mathbf{z})=0$ or $\nabla g(\mathbf{z})=0$, then the statement is true. Thus, we may assume that both vectors are different from zero. Suppose $\nabla h(\mathbf{z})=\beta \nabla g(\mathbf{z})$ for some complex number $\beta=c+id$, then we have 
\begin{equation}\label{eq:nh.ng.ing}
\nabla h(\mathbf{z})=c \nabla g(\mathbf{z})+di\nabla g(\mathbf{z}).
\end{equation}
Therefore the vectors $\nabla h(\mathbf{z})$, $\nabla g(\mathbf{z})$ and $i\nabla g(\mathbf{z})$ are linearly dependent over the real numbers.

\paragraph{$\eqref{it:nh.ng.ing}\Rightarrow\eqref{it:df.bdf.ld}$}
Suppose that there exist real numbers $t_1,t_2$ and $t_3$, not all zero, such that
\begin{equation}\label{eq:nh.ng.ing.1}
t_1\nabla h(\mathbf{z})+t_2 \nabla g(\mathbf{z})+t_3i\nabla g(\mathbf{z})=0.
\end{equation}
Suppose $t_1=0$. By equation~\eqref{eq:nh.ng.ing.1} we get
\begin{equation*}
\nabla g(\mathbf{z})(t_2+it_3)=0.
\end{equation*}
Since $t_2+it_3$ is not zero, then $\nabla g(\mathbf{z})=0$. Therefore by equation \eqref{eq:nabla.d} the vectors $\overline{\mathrm{d}f(\mathbf{z})}$ and $\bar{\mathrm{d}}f(\mathbf{z})$ are linearly dependent over the complex numbers.

Now suppose $t_1\neq 0$. By equation \eqref{eq:nh.ng.ing.1} and changing the variables we get
\begin{equation}\label{eq:nh.ng.ing.2}
\nabla h(\mathbf{z})=c \nabla g(\mathbf{z})+di\nabla g(\mathbf{z}).
\end{equation}
Using \eqref{eq:nh.ng.ing.2} in \eqref{eq:df.usando.g.h} we get
\begin{equation}
\label{eq:df.bardf.1}
2\overline{\mathrm{d}f(\mathbf{z})}=\bigl((1+d)-ci\bigr) \nabla g(\mathbf{z}),\qquad
2\bar{\mathrm{d}}f(\mathbf{z})=\bigl((1-d)+ci\bigr)\nabla g(\mathbf{z}).
\end{equation}
If $(1-d)+ci\neq0$, then by equation~\eqref{eq:df.bardf.1} we get \begin{equation}\label{eq:df.bdf.li}
\overline{\mathrm{d}f(\mathbf{z})}=\frac{(1+d)-ci}{(1-d)+ci}\bar{\mathrm{d}}f(\mathbf{z}).
\end{equation}
Hence $\overline{\mathrm{d}f(\mathbf{z})}$ and $\bar{\mathrm{d}}f(\mathbf{z})$ are linearly dependent over the complex numbers.
If $(1-d)+ci=0$, then $c=0$ and $d=1$. By~\eqref{eq:nh.ng.ing.1} we get $\nabla h(\mathbf{z})=i\nabla g(\mathbf{z})$. Thus, by Proposition~\ref{prop:cplx.partials}-\ref{it:bdf=0} this is
equivalent to have $\bar{\mathrm{d}}f(\mathbf{z})=0$. Now the linear dependence of $\overline{\mathrm{d}f(\mathbf{z})}$ and $\bar{\mathrm{d}}f(\mathbf{z})$ is obvious.
\end{proof}

By Proposition~\ref{prop:df.bdf.ng.nh}, we get the following useful criterium due to Oka.
\begin{proposition}[Oka's Criterium {\cite[Proposition~1]{Oka:TPWHH}}]\label{lema oka}
Let $f=g+ih\colon\C^n\to\C$ be a mixed function. Let $\mathbf{z} \in \C^n$. The following two conditions are equivalent,  
\begin{enumerate}
\item The vectors $\nabla g(\mathbf{z})$ and $\nabla h(\mathbf{z})$ are linearly dependent over $\R$.\label{it:oka.crit1}
\item There exists a complex number $\alpha \in \mathbb{S}^1$ such that $\overline{\mathrm{d} f(\mathbf{z})}=\alpha \bar{\mathrm{d}}f(\mathbf{z})$.\label{it:oka.crit2}
\end{enumerate}
\end{proposition}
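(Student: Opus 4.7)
The strategy is to derive Oka's Criterium as a refinement of Proposition~\ref{prop:df.bdf.ng.nh}, which already gives the equivalence between $\C$-linear dependence of $\overline{\mathrm{d}f(\mathbf{z})}, \bar{\mathrm{d}}f(\mathbf{z})$ and $\C$-linear dependence of $\nabla g(\mathbf{z}), \nabla h(\mathbf{z})$. The new content in Oka's Criterium is that \emph{real} linear dependence of $\nabla g, \nabla h$ is detected precisely by the scalar $\alpha$ having unit modulus. The plan is therefore to identify, inside the proof of Proposition~\ref{prop:df.bdf.ng.nh}, exactly where the condition $|\alpha|=1$ appears, and then to verify the two implications using the explicit formulas \eqref{eq:df.usando.g.h}.

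For $\eqref{it:oka.crit1}\Rightarrow\eqref{it:oka.crit2}$, I would first dispose of the degenerate cases: if both $\nabla g(\mathbf{z})=0$ and $\nabla h(\mathbf{z})=0$, then both $\overline{\mathrm{d}f(\mathbf{z})}$ and $\bar{\mathrm{d}}f(\mathbf{z})$ vanish and any $\alpha\in\mathbb{S}^1$ works; if exactly one of them is zero, then \eqref{eq:df.usando.g.h} directly gives $\overline{\mathrm{d}f(\mathbf{z})}=\pm\bar{\mathrm{d}}f(\mathbf{z})$. Otherwise, by $\R$-linear dependence we may write $\nabla h(\mathbf{z})=\lambda\,\nabla g(\mathbf{z})$ for some $\lambda\in\R$ (after possibly swapping roles). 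Substituting into \eqref{eq:df.usando.g.h} gives
\begin{equation*}
\overline{\mathrm{d}f(\mathbf{z})}=\tfrac{1}{2}(1-i\lambda)\nabla g(\mathbf{z}),\qquad
\bar{\mathrm{d}}f(\mathbf{z})=\tfrac{1}{2}(1+i\lambda)\nabla g(\mathbf{z}),
\end{equation*}
so $\overline{\mathrm{d}f(\mathbf{z})}=\alpha\,\bar{\mathrm{d}}f(\mathbf{z})$ with $\alpha=(1-i\lambda)/(1+i\lambda)$, and $|\alpha|=1$ since numerator and denominator are complex conjugates.

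For $\eqref{it:oka.crit2}\Rightarrow\eqref{it:oka.crit1}$, I would reuse the computation from the proof of $\eqref{it:df.bdf.ld}\Rightarrow\eqref{it:ng.nh.inh}$ in Proposition~\ref{prop:df.bdf.ng.nh}. Writing $\alpha=a+ib$, the case $\alpha=1$ already yields $\nabla h(\mathbf{z})=0$, hence $\R$-linear dependence trivially. For $\alpha\neq 1$, equation \eqref{eq:ng.nh.inh} reads
\begin{equation*}
\nabla g(\mathbf{z})=\frac{-2b}{(1-a)^2+b^2}\nabla h(\mathbf{z})+\frac{1-|\alpha|^2}{(1-a)^2+b^2}\,i\nabla h(\mathbf{z}).
\end{equation*}
The hypothesis $|\alpha|=1$ kills the coefficient of $i\nabla h(\mathbf{z})$, leaving $\nabla g(\mathbf{z})$ as a real scalar multiple of $\nabla h(\mathbf{z})$, which is the desired $\R$-linear dependence.

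The only real obstacle is bookkeeping of degenerate cases (one of the gradients vanishes, or $\alpha=1$ so that $(1+i\lambda)$ or $(1-a)^2+b^2$ needs care); aside from this, the content is entirely a consequence of \eqref{eq:df.usando.g.h} together with the already-proven Proposition~\ref{prop:df.bdf.ng.nh}, so the proof should be short.
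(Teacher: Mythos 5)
Your proof is correct, and it follows essentially the route the paper takes: the paper simply asserts the criterium as a consequence of Proposition~\ref{prop:df.bdf.ng.nh} (citing Oka), and your argument fills in exactly the intended details by tracking where $|\alpha|=1$ enters the computations \eqref{eq:df.usando.g.h} and \eqref{eq:ng.nh.inh}. The degenerate cases are handled adequately, so nothing is missing.
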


\subsection{Polar weighted homogeneous polynomials}

An important class of mixed functions, that we shall use in Examples~\ref{exa:strong.polar} and \ref{example:fallaLI},
is given by polar weighted homogeneous polynomials, which are real analytic maps that generalize complex weighted homogeneous polynomials.
They are mixed polynomials which are weighted homogeneous with respect to an $\mathbb{R}^+$-action and also with respect to a $\mathbb{S}^1$-action.
Let $p_1,\dots,p_n$ and $q_1,\dots,q_n$ be non-zero integers such that
\begin{equation*}
 \gcd (p_1, \dots , p_n) = 1\quad \text{and} \quad \gcd(q_1, \dots, q_n) = 1.
\end{equation*}
Let $w \in \C^*$ written in its polar form
$w=t\tau$, with $t \in \mathbb{R}^+$ and $\tau \in \mathbb{S}^1$. A \emph{polar $\C^*$-action} on $\C^n$ with \emph{radial weights} $(p_1, \dots , p_n)$ and \emph{angular weights} $(q_1,\dots, q_n)$
is given by:
\begin{equation}\label{eq:polar-action}
 t\tau \bullet \mathbf{z} = (t^{p_1}\tau^{q_1}z_1, \dots , t^{p_n}\tau^{q_n}z_n)\;.
\end{equation}

\begin{definition}
A mixed function $f\colon \C^n \to \C$ is \emph{polar weighted homogeneous\/} if there
exists $p_1, \dots , p_n$ positive integers, $q_1,\dots, q_n$ non-zero integers, $a, c$ positive integers, and a polar $\C^*$-action given by \eqref{eq:polar-action} such that $f$ satisfies the
following functional equation
\begin{equation}\label{eq:func.eq}
f(t\tau \bullet \mathbf{z}) = t^a\tau^c f(\mathbf{z})\;.
\end{equation}
We say that the polar weighted homogeneous function $f$ has \emph{radial weight type} $(p_1,\dots,p_n;a)$ and \emph{angular weight type} $(q_1,\dots,q_n;c)$.
Following Oka in \cite{Oka:OMPC} we say that $f$ is \textit{strongly polar weighted homogeneous} if $p_j=q_j$ for all $j=1,\dots, n$.
\end{definition}

Polar weighted homogeneous polynomials were defined in \cite{Cisneros-Molina:Join} inspired by the definition of the so-called twisted Brieskorn-Pham polynomials defined in \cite{Ruas-Seade-Verjovsky:RSMF,Seade:OBDAHVF}.
They have been intensively studied by Oka \cite{Oka:TPWHH,Oka:NDMF,Oka:MPCPD1,Oka:MBV,Oka:OMPC,Oka:MFSPWHFT}
and other authors \cite{Blanloeil-Oka:TSPWHL,Cisneros-Romano:CIPWHS,Cisneros-Romano:REMSMF,HenandezDLC-LopezDM:SFIS,Inaba:OEMNCMP,Inaba-etal:TMHCT}.

\subsection{Vector fields}
In this subsection we define the notion of vector field in our context. Let $f\colon \C^n \to \C$ be a mixed function with an isolated singularity at the origin.

\begin{definition}
By a (continuous, smooth or holomorphic) \textit{vector field} on $V_f$ we understand the restriction $v$ to $V_f$ of a (continuous, smooth or holomorphic) vector field on a neighborhood of $V_f$ in $\C^n$, 
for which $V_f$ is an invariant set. In other words, if an integral curve of the vector field intersects $V_f$, then it is contained in $V_f$.
\end{definition}

 \begin{definition}
	Let $u$ and $v$ be vector fields on $V_f$ that vanish only at the origin. We say that $u$ and $v$ are \emph{homotopic} if there exists a continuous $1$-parameter 
	family $w_t$ of vector fields on $V_f$ and $t \in [0,1]$, such that $w_0=u$, $w_1=v$ and, for each $t$, the vector field $w_t$ vanish only at the origin. 
	We denote by $\Theta(V_f,\mathbf{0})$ the set of homotopy classes of such vector fields.
\end{definition}

\begin{definition}
We say that a vector field $v$ on $V_f$ is a \textit{complex vector field} if $iv$ is also a vector field on $V_f$.
\end{definition}

\begin{proposition}\label{prop:mixed.vf}
Let $f\colon \C^n \to \C$ be a mixed function with an isolated critical point at the origin and  $\mathbf{z}\in V_f^*$. Let $v=(v_1,\dots,v_n)$ be a vector field on $\C^n$. Then, the following are equivalent:
\begin{enumerate}[(i)]
\item The vector $v(\mathbf{z})$ is tangent to $V_f^*$ at $\mathbf{z}$.\label{it:mvf}
\item The following equalities are satisfied:\label{it:eqs.mixed.vf}
\begin{align}
\inpr[\R^{2n}]{\nabla g(\mathbf{z})}{v(\mathbf{z})}&=\re\Biggl(\sum_{j=1}^n\biggl(\overline{\frac{\partial f}{\partial z_j}}+\frac{\partial f}{\partial \bar{z}_j}\biggr)\bar{v}_j\Biggr)=0,\label{eq:eq.vf.g}\\
\inpr[\R^{2n}]{\nabla h(\mathbf{z})}{v(\mathbf{z})}&=\re\Biggl(\sum_{j=1}^ni\biggl(\overline{\frac{\partial f}{\partial z_j}}-\frac{\partial f}{\partial \bar{z}_j}\biggr)\bar{v}_j\Biggr)=0.\label{eq:eq.vf.h}
\end{align}
\item The following equalities are satisfied:\label{it:eqs.df.dbf}
\begin{align}\label{eq:v.df.dbf}
\inpr[\C^n]{\overline{\mathrm{d}f(\mathbf{z})}}{v(\mathbf{z})}=\sum_{j=1}^n\overline{\frac{\partial f}{\partial z_j}}\bar{v}_j&=c+di, &   \inpr[\C^n]{\bar{\mathrm{d}}f(\mathbf{z})}{v(\mathbf{z})}=\sum_{j=1}^n\frac{\partial f}{\partial \bar{z}_j}\bar{v}_j&=-c+di,
\end{align}
for some real numbers $c$ and $d$.
\end{enumerate}
\end{proposition}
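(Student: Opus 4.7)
The plan is to reduce (i) to a pair of real orthogonality conditions and then to translate these first into the real expressions of (ii) and next into the complex expressions of (iii) using the Wirtinger-type formulas already assembled in Section~\ref{sec:mixed}. Nothing deep is required beyond the identity $\inpr[\R^{2n}]{-}{-}=\re\inpr[\C^n]{-}{-}$ and careful bookkeeping with the factor of $i$; the argument splits naturally as (i)$\Leftrightarrow$(ii) and (ii)$\Leftrightarrow$(iii).

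For (i)$\Leftrightarrow$(ii), I first observe that since $f$ has an isolated critical point at $\mathbf{0}$ and $\mathbf{z}\in V_f^*$ is not the origin, the differential $Df_\mathbf{z}$ has full rank $2$, so $\nabla g(\mathbf{z})$ and $\nabla h(\mathbf{z})$ are $\R$-linearly independent in $\R^{2n}$. Locally, $V_f^*$ is the transverse intersection of the regular level sets $\{g=0\}$ and $\{h=0\}$, and hence $T_\mathbf{z}V_f^*$ equals the Euclidean orthogonal complement of $\mathrm{span}_\R\{\nabla g(\mathbf{z}),\nabla h(\mathbf{z})\}$. Consequently $v(\mathbf{z})\in T_\mathbf{z}V_f^*$ if and only if $\inpr[\R^{2n}]{\nabla g(\mathbf{z})}{v(\mathbf{z})}=0=\inpr[\R^{2n}]{\nabla h(\mathbf{z})}{v(\mathbf{z})}$. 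The explicit formulas in (ii) then drop out upon applying $\inpr[\R^{2n}]{-}{-}=\re\inpr[\C^n]{-}{-}$ to the $\C^n$-descriptions \eqref{eq:nablag.z} and \eqref{eq:nablah.z} of $\nabla g(\mathbf{z})$ and $\nabla h(\mathbf{z})$.

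For (ii)$\Leftrightarrow$(iii), introduce the complex scalars
\[
A:=\inpr[\C^n]{\overline{\mathrm{d}f(\mathbf{z})}}{v(\mathbf{z})},\qquad B:=\inpr[\C^n]{\bar{\mathrm{d}}f(\mathbf{z})}{v(\mathbf{z})}.
\]
Using \eqref{eq:nabla.d} together with linearity in the first slot of $\inpr[\C^n]{-}{-}$, I get
$\inpr[\C^n]{\nabla g(\mathbf{z})}{v(\mathbf{z})}=A+B$ and $\inpr[\C^n]{\nabla h(\mathbf{z})}{v(\mathbf{z})}=i(A-B)$, whence
\[
\inpr[\R^{2n}]{\nabla g(\mathbf{z})}{v(\mathbf{z})}=\re(A+B),\qquad \inpr[\R^{2n}]{\nabla h(\mathbf{z})}{v(\mathbf{z})}=\re\bigl(i(A-B)\bigr)=-\im(A-B).
\]
The two equations in (ii) therefore say precisely $\re A=-\re B$ and $\im A=\im B$; setting $c:=\re A$ and $d:=\im A$ yields $A=c+di$ and $B=-c+di$, which is (iii). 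The converse is immediate by computing $A+B=2di$ and $A-B=2c$ from the form in (iii).

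The only point where a slip is easy is the sign from $\re(iz)=-\im z$ that arises when passing from $\nabla h$ to $A-B$; apart from this the argument is a routine real/complex bookkeeping exercise and I expect no substantive obstacle.
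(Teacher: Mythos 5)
Your proposal is correct and follows essentially the same route as the paper: tangency is identified with vanishing of $Df_\mathbf{z}(v(\mathbf{z}))$, i.e.\ Euclidean orthogonality to $\nabla g(\mathbf{z})$ and $\nabla h(\mathbf{z})$, and the passage between (ii) and (iii) is the same real/imaginary-part bookkeeping via \eqref{eq:nabla.d} and $\inpr[\R^{2n}]{-}{-}=\re\inpr[\C^n]{-}{-}$, which you merely package more compactly with the scalars $A$ and $B$.
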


\begin{proof}
\paragraph{$\eqref{it:mvf}\Leftrightarrow\eqref{it:eqs.mixed.vf}$}
The vector $v(\mathbf{z})$ is tangent to $V_f^*$ if and only if
\begin{equation}
Df_\mathbf{z}(v(\mathbf{z}))=\begin{pmatrix}
                                            \nabla g(\mathbf{z})\\
                                            \nabla h(\mathbf{z})
                                           \end{pmatrix}\cdot v(\mathbf{z})^t=\begin{pmatrix}
                             0\\0
                            \end{pmatrix},
\end{equation}
but this is equivalent to equations \eqref{eq:eq.vf.g} and \eqref{eq:eq.vf.h}.
\paragraph{$\eqref{it:eqs.mixed.vf}\Rightarrow\eqref{it:eqs.df.dbf}$}
By equation~\eqref{eq:eq.vf.g} we have
\begin{equation*}
\re\Biggl(\sum_{j=1}^n\frac{\partial f}{\partial \bar{z}_j}\bar{v}_j\Biggr)=-\re\Biggl(\sum_{j=1}^n\overline{\frac{\partial f}{\partial z_j}}\bar{v}_j\Biggr)=-c.
\end{equation*}
By equation~\eqref{eq:eq.vf.h} we have
\begin{equation*}
\im\Biggl(\sum_{j=1}^n\frac{\partial f}{\partial \bar{z}_j}\bar{v}_j\Biggr)=-\re\Biggl(\sum_{j=1}^ni\frac{\partial f}{\partial \bar{z}_j}\bar{v}_j\Biggr)
=-\re\Biggl(\sum_{j=1}^ni\overline{\frac{\partial f}{\partial z_j}}\bar{v}_j\Biggr)=\im\Biggl(\sum_{j=1}^n\overline{\frac{\partial f}{\partial z_j}}\bar{v}_j\Biggr)=d.
\end{equation*}
\paragraph{$\eqref{it:eqs.df.dbf}\Rightarrow\eqref{it:eqs.mixed.vf}$}
It is straightforward.
\end{proof}

As a corollary we get.
\begin{corollary}
\label{cor:complex.vf}
Let $f\colon \C^n \to \C$ be a mixed function with an isolated critical point at the origin and  $\mathbf{z}\in V_f^*$. Let $v=(v_1,\dots,v_n)$ be a vector field on $\C^n$. 
Then, the following are equivalent:
\begin{enumerate}[(i)]
\item The vectors $v(\mathbf{z})$ and $iv(\mathbf{z})$ are tangent to $V_f^*$ at $\mathbf{z}$.\label{it:cvf}
\item The following equalities are satisfied:\label{it:eqs.mixed}
\begin{align}\label{eq:mixed}
 \sum_{j=1}^n\biggl(\overline{\frac{\partial f}{\partial z_j}}+\frac{\partial f}{\partial \bar{z}_j}\biggr)\bar{v}_j&=0, & 
\sum_{j=1}^n\biggl(\overline{\frac{\partial f}{\partial z_j}}-\frac{\partial f}{\partial \bar{z}_j}\biggr)\bar{v}_j&=0.
\end{align}
\item The following equalities are satisfied:\label{it:eqs.holo}
\begin{align}\label{eq:holo}
\sum_{j=1}^n\overline{\frac{\partial f}{\partial z_j}}\bar{v}_j&=0, &   \sum_{j=1}^n\frac{\partial f}{\partial \bar{z}_j}\bar{v}_j&=0.
\end{align}
\end{enumerate}
\end{corollary}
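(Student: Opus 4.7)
The plan is to derive the corollary directly from Proposition~\ref{prop:mixed.vf} by applying its three-way characterization to both $v$ and to the vector field $iv$. The key observation is that if $v=(v_1,\dots,v_n)$ then $iv=(iv_1,\dots,iv_n)$, whose componentwise conjugates are $-i\bar v_j$; plugging these into the sums appearing in Proposition~\ref{prop:mixed.vf}(iii) multiplies each sum by $-i$, which swaps the roles of the real and imaginary parts.

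I would handle the equivalence $\eqref{it:eqs.mixed}\Leftrightarrow\eqref{it:eqs.holo}$ first, since it is purely formal: adding the two equations in \eqref{eq:mixed} gives $2\sum_{j}\overline{\partial f/\partial z_j}\,\bar v_j=0$, while subtracting them gives $2\sum_{j}(\partial f/\partial \bar z_j)\,\bar v_j=0$, producing \eqref{eq:holo}; the converse is the same manipulation run backwards.

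For $\eqref{it:eqs.holo}\Rightarrow\eqref{it:cvf}$, assume \eqref{eq:holo} holds. Then Proposition~\ref{prop:mixed.vf}\eqref{it:eqs.df.dbf} applied to $v$ with $c=d=0$ shows that $v(\mathbf{z})$ is tangent to $V_f^*$. Multiplying the two equations \eqref{eq:holo} by $-i$ and using $\overline{iv_j}=-i\bar v_j$ gives the analogous equalities for $iv$ (again with $c=d=0$), so $iv(\mathbf{z})$ is also tangent. For the reverse direction $\eqref{it:cvf}\Rightarrow\eqref{it:eqs.holo}$, apply Proposition~\ref{prop:mixed.vf}\eqref{it:eqs.df.dbf} to $v$ to obtain real numbers $c,d$ with $\sum_j\overline{\partial f/\partial z_j}\,\bar v_j=c+di$ and $\sum_j(\partial f/\partial \bar z_j)\,\bar v_j=-c+di$. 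Applying the same proposition to $iv$ produces real numbers $c',d'$ satisfying $\sum_j\overline{\partial f/\partial z_j}(-i\bar v_j)=c'+d'i$ and $\sum_j(\partial f/\partial \bar z_j)(-i\bar v_j)=-c'+d'i$. Simplifying the left-hand sides with the expressions just found for $v$ gives the two matching conditions $c'=d,\ d'=-c$ (from the first pair) and $c'=-d,\ d'=c$ (from the second), which together force $c=d=0$, hence \eqref{eq:holo}.

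The only real pitfall is the bookkeeping of complex conjugates when replacing $v$ by $iv$; once this is tracked carefully, the argument reduces to the observation that requiring both the ``real-part conditions'' of Proposition~\ref{prop:mixed.vf}\eqref{it:eqs.mixed.vf} for $v$ and for $iv$ is equivalent to asking that the underlying complex sums themselves vanish, which is exactly the content of \eqref{eq:mixed} and \eqref{eq:holo}.
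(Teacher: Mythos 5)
Your proof is correct and follows exactly the route the paper intends: the paper's own proof consists of the single line that the corollary ``follows straightforward by Proposition~\ref{prop:mixed.vf}'', and your argument is precisely the careful spelling-out of that deduction (applying the proposition to both $v$ and $iv$, tracking $\overline{iv_j}=-i\bar v_j$, and noting that the linear change between \eqref{eq:mixed} and \eqref{eq:holo} is invertible). No gaps; the bookkeeping forcing $c=d=0$ checks out.
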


\begin{proof}
The proof follows straightforward by Proposition~\ref{prop:mixed.vf}.
\end{proof}

\begin{proposition}\label{prop:mult.orth}
Let $f\colon \C^n \to \C$ be a mixed function with an isolated critical point at the origin and $\mathbf{z}\in V^*$. Let $v=(v_1,\dots,v_n)$ be a continuous vector field on $V$. If the vectors $\overline{\mathrm{d}f(\mathbf{z})}$ and $\bar{\mathrm{d}}f(\mathbf{z})$ are linearly dependent over $\C$,
then
\begin{equation}\label{eq:df.bdf.orto.v}
\inpr[\C^n]{\overline{\mathrm{d}f(\mathbf{z})}}{v(\mathbf{z})}=0,\qquad\text{and}\qquad\inpr[\C]{\bar{\mathrm{d}}f(\mathbf{z})}{v(\mathbf{z})}=0.
\end{equation}
\end{proposition}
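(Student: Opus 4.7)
The plan is to combine the constrained form of the tangency relations from Proposition~\ref{prop:mixed.vf} with a compatibility constraint on the proportionality constant supplied by Oka's criterion.

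First, since $v(\mathbf{z})$ is tangent to $V_f^*$ at the regular point $\mathbf{z}$, Proposition~\ref{prop:mixed.vf}-\ref{it:eqs.df.dbf} provides real numbers $c,d$ with
\begin{equation*}
\inpr[\C^n]{\overline{\mathrm{d}f(\mathbf{z})}}{v(\mathbf{z})} = c + di, \qquad \inpr[\C^n]{\bar{\mathrm{d}}f(\mathbf{z})}{v(\mathbf{z})} = -c + di,
\end{equation*}
so the claim reduces to showing $c = d = 0$. I would first dispose of the degenerate cases: the vectors $\overline{\mathrm{d}f(\mathbf{z})}$ and $\bar{\mathrm{d}}f(\mathbf{z})$ cannot vanish simultaneously, since by \eqref{eq:nabla.d} that would force $\nabla g(\mathbf{z}) = \nabla h(\mathbf{z}) = 0$, contradicting $\mathbf{z} \in V_f^*$; and if exactly one of them vanishes then the corresponding inner product equals zero, which immediately forces $c = d = 0$ and thus the other inner product as well.

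In the remaining case both vectors are nonzero, so the $\C$-linear dependence hypothesis produces a unique $\alpha \in \C^*$ with $\overline{\mathrm{d}f(\mathbf{z})} = \alpha\, \bar{\mathrm{d}}f(\mathbf{z})$. The key observation is that $\mathbf{z} \in V_f^*$ forces $f\colon\R^{2n}\to\R^2$ to be a submersion at $\mathbf{z}$, so $\nabla g(\mathbf{z})$ and $\nabla h(\mathbf{z})$ are $\R$-linearly independent there. By Oka's criterion (Proposition~\ref{lema oka}), this excludes $\alpha \in \mathbb{S}^1$, hence $\abs{\alpha}\neq 1$. Substituting $\overline{\mathrm{d}f(\mathbf{z})} = \alpha\,\bar{\mathrm{d}}f(\mathbf{z})$ into the first inner product and using linearity in the first slot gives $c + di = \alpha(-c+di)$; taking moduli and using $\abs{-c+di} = \abs{c+di}$ yields $\abs{c+di} = \abs{\alpha}\,\abs{c+di}$, which forces $c = d = 0$.

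I expect the main obstacle to be recognizing that Oka's criterion supplies the needed rigidity: the bare identity $c+di = \alpha(-c+di)$ admits nontrivial solutions for generic $\alpha \in \C^*$, and it is precisely the constraint $\abs{\alpha}\neq 1$ furnished by the regularity of $\mathbf{z}$ that rules them out. Once this is in hand, the modulus computation closes the argument and one does not need to analyze $\alpha$ further.
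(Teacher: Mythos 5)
Your proposal is correct and follows essentially the same route as the paper's proof: reduce to showing $c=d=0$ via Proposition~\ref{prop:mixed.vf}, invoke Oka's criterion (Proposition~\ref{lema oka}) to get $\abs{\alpha}\neq 1$ from the regularity of $\mathbf{z}$, and conclude by taking moduli in $c+di=\alpha(-c+di)$. Your explicit treatment of the case where exactly one of $\overline{\mathrm{d}f(\mathbf{z})}$, $\bar{\mathrm{d}}f(\mathbf{z})$ vanishes is a minor refinement of the paper's argument, which only notes that they cannot both vanish.
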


\begin{proof}
If $v(\mathbf{z})=0$ it is clear that equations \eqref{eq:df.bdf.orto.v} are satisfied. So, suppose $v(\mathbf{z})\neq0$. Since $\mathbf{z}\in V^*$ the vectors
$\mathrm{d}f(\mathbf{z})$ and $\bar{\mathrm{d}}f(\mathbf{z})$ cannot be both zero. 
By Proposition~\ref{prop:mixed.vf} there exist real numbers $c$ and $d$ such that
\begin{align*}
\inpr[\C^n]{\overline{\mathrm{d}f(\mathbf{z})}}{v(\mathbf{z})}&=\sum_{j=1}^n\overline{\frac{\partial f}{\partial z_j}}(\mathbf{z})\bar{v}_j(\mathbf{z})=c+id,\\
\inpr[\C^n]{\bar{\mathrm{d}}f(\mathbf{z})}{v(\mathbf{z})}&=\sum_{j=1}^n\frac{\partial f}{\partial \bar{z}_j}(\mathbf{z})\bar{v}_j(\mathbf{z})=-c+id.
\end{align*}
By hypothesis, there exists a complex number $\alpha$,  such that $\overline{\mathrm{d}f(\mathbf{z})}=\alpha\bar{\mathrm{d}}f(\mathbf{z})$. Since $\mathbf{z}\in V^*$ by Proposition~\ref{lema oka}
we have that $|\alpha|\neq1$.
Then we have that
\begin{equation}
\label{eq:Prop.c.id}
c+id=\inpr[\C^n]{\overline{\mathrm{d}f(\mathbf{z})}}{v(\mathbf{z})}=\inpr[\C^n]{\alpha\bar{\mathrm{d}}f(\mathbf{z})}{v(\mathbf{z})}=\alpha(-c+id).
\end{equation}
Computing the norm in both sides of the equality~\eqref{eq:Prop.c.id} we get
\begin{equation*}
|c+di| = |\alpha| |-c+di|.    
\end{equation*}
Since $|\alpha|\neq1$, we get $c=d=0$.
\end{proof}

The following example shows the existence of complex vector fields on real analytic varieties given by strongly polar weighted homogeneous polynomials. See~\cite{Cisneros-Molina:Join, Cisneros-Romano:CIPWHS, Oka:TPWHH} for more details about weighted homogeneous polynomials.

\begin{example}\label{exa:strong.polar}
Let $f\colon \C^n \to \C$ be a strongly polar weighted homogeneous polynomial. Let $v_{\R^+}$ and $v_{\mathbb{S}^1}$ be the radial and the angular vector fields, respectively. 
Since $f$ is a strongly polar weighted homogeneous polynomial we have that $iv_{\R^+}=v_{\mathbb{S}^1}$. If $\lambda\colon\C^n\to\C$ is a
(continuous, smooth or holomorphic) function, then 
\begin{equation*}
\lambda(\mathbf{z)}v_{\R^+}(\mathbf{z)},\qquad\text{for $\mathbf{z}\in V$,}
\end{equation*}
is a complex vector field on $V$. See \cite{Cisneros-Romano:CIPWHS} and \cite{Oka:SMPCC} for specific examples of strongly polar weighted homogeneous polynomial.
\end{example}

\section{The mixed GSV-index}
\label{sec:TheMGSV}
Let $f\colon \C^n \to \C$ be a mixed function. Suppose that  $\mathbf{0}$ is an isolated singular point of $V_f$. Let $v$ be a continuous vector field on $V_f$ which is non-zero on the set of regular points $V_f^*$ of $V_f$. In this section we generalize the GSV-index for the vector field $v$. The idea is to assign to each point $\mathbf{z}$ in $V_f^*$ a complex $2$-frame in $\C^n$.
Let $\mathbf{z}\in V_f^*$ and consider the vector $v(\mathbf{z})$, the following lemma tells us when we can add a vector $u\in\C^n$ to get a complex $2$-frame $(u,v(\mathbf{z}))_\C$ in $\C^n$.

\begin{lemma}\label{lem:NF.li}
Let $R$ be the complex line generated by $v(\mathbf{z})$. Decompose $\C^n$ as the direct sum 
\begin{equation*}
 \C^n=R\oplus R^\perp,   
\end{equation*}
where $R^\perp$ is the orthogonal complement of $R$
with respect to the Hermitian product $\inpr[\C^n]{\ }{\ }$ on $\C^n$. Let $P_{R}\colon \C^n\to R$ and $P_{R^\perp}\colon \C^n\to R^\perp$ be the corresponding orthogonal projections.
Let $u\in\C^n$ be different from the zero vector. Then, the following are equivalent:
\begin{enumerate}
 \item The vector $u$ is not a multiple of $v(\mathbf{z})$.\label{it:no.mult}
 \item The projection $P_{R^\perp}(u)\neq0$.\label{it:no.zero}
\end{enumerate}
\end{lemma}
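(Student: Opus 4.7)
The statement is essentially a restatement of how the orthogonal decomposition $\C^n=R\oplus R^{\perp}$ interacts with the definition of the line $R$. My plan is to go through the equivalence contrapositively, showing that $P_{R^{\perp}}(u)=0$ if and only if $u$ is a scalar multiple of $v(\mathbf{z})$.

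First, I would record that since $\mathbf{z}\in V_f^*$ and $v$ is non-zero on the regular locus, we have $v(\mathbf{z})\neq 0$, so $R$ is genuinely a complex line and $\{v(\mathbf{z})\}$ is a basis of $R$. Consequently a vector $w\in\C^n$ lies in $R$ if and only if $w=\lambda\, v(\mathbf{z})$ for some $\lambda\in\C$, i.e., if and only if $w$ is a multiple of $v(\mathbf{z})$.

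Next, I would use the direct sum decomposition. For any $u\in\C^n$ one has the unique expression
\begin{equation*}
u=P_R(u)+P_{R^{\perp}}(u),\qquad P_R(u)\in R,\ P_{R^{\perp}}(u)\in R^{\perp}.
\end{equation*}
Hence $P_{R^{\perp}}(u)=0$ is equivalent to $u=P_R(u)\in R$, which by the previous paragraph is equivalent to $u$ being a multiple of $v(\mathbf{z})$. Taking contrapositives gives the stated equivalence between \eqref{it:no.mult} and \eqref{it:no.zero}. For the concrete formula one can, if desired, write $P_R(u)=\frac{\inpr[\C^n]{u}{v(\mathbf{z})}}{\inpr[\C^n]{v(\mathbf{z})}{v(\mathbf{z})}}v(\mathbf{z})$, but this is not needed for the equivalence itself.

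There is no real obstacle here: the lemma is a direct consequence of the uniqueness of the decomposition $\C^n=R\oplus R^{\perp}$ together with the fact that $R$ is one-dimensional. The only point requiring a brief remark is the non-vanishing of $v(\mathbf{z})$, which is built into the hypothesis that $v$ is non-singular on $V_f^*$.
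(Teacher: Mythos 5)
Your proof is correct and follows essentially the same route as the paper's: both rest on the unique decomposition $u=P_R(u)+P_{R^\perp}(u)$ and the observation that $P_{R^\perp}(u)=0$ exactly when $u$ lies in the line $R$ spanned by $v(\mathbf{z})$. You simply spell out the details (non-vanishing of $v(\mathbf{z})$, the explicit projection formula) that the paper leaves implicit.
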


\begin{proof}
  \paragraph{$\eqref{it:no.mult}\Leftrightarrow\eqref{it:no.zero}$} We have that $u$ decomposes as $u=P_R(u)+P_{R^\perp}(u)$. Hence $u$ is not a multiple of $v(\mathbf{z})$ if and only if $P_{R^\perp}(u)\neq0$.
\end{proof}

Now, we want to see when $(\nabla g(\mathbf{z}),v(\mathbf{z}))_\C$ is a complex $2$-frame in $\C^n$. 
We only need to check if $\nabla g(\mathbf{z})$ is a complex multiple of $v(\mathbf{z})$ or not using Lemma~\ref{lem:NF.li}.
The following results decompose $\nabla g(\mathbf{z})$ with respect to $R$ and $R^\perp$.

\begin{lemma}\label{lem:u}
Let $R$, $R^\perp$, $P_{R}$ and $P_{R^\perp}$ be as in Lemma~\ref{lem:NF.li}. 
\begin{enumerate}
 \item The projection $P_{R}\bigl(\nabla g(\mathbf{z})\bigr)$ is the vector\label{it:u}
\begin{equation}\label{eq:u=aev}
u(\mathbf{z})=\Biggl(\frac{1}{\norm{v(\mathbf{z})}^2}\sum_{i=1}^n\biggl(\overline{\frac{\partial f}{\partial z_i}}(\mathbf{z})+\frac{\partial f}{\partial \bar{z}_i}(\mathbf{z})\biggr)\bar{v}_i(\mathbf{z})\Biggr)v(\mathbf{z}).
\end{equation}

\item The projection $P_{R^\perp}\bigl(\nabla g(\mathbf{z})\bigr)$ is the vector\label{it:w}
\begin{equation*}
w(\mathbf{z})=\nabla g(\mathbf{z})-u(\mathbf{z}).
\end{equation*}
\end{enumerate}
\end{lemma}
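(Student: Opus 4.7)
The plan is to observe that both items are immediate consequences of the standard formula for orthogonal projection onto a complex line in a Hermitian inner product space, combined with the coordinate expression for $\nabla g(\mathbf{z})$ derived in equation \eqref{eq:nablag.z}.

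For item \eqref{it:u}, I would first recall that since $R = \mathbb{C}\cdot v(\mathbf{z})$ is the complex line spanned by the nonzero vector $v(\mathbf{z})$, the orthogonal projection of any vector $x \in \mathbb{C}^n$ onto $R$ with respect to $\inpr[\mathbb{C}^n]{-}{-}$ is given by
\begin{equation*}
P_R(x) = \frac{\inpr[\mathbb{C}^n]{x}{v(\mathbf{z})}}{\norm{v(\mathbf{z})}^2}\, v(\mathbf{z}).
\end{equation*}
Applying this to $x = \nabla g(\mathbf{z})$ and using the expression for $\nabla g(\mathbf{z})$ in $\mathbb{C}^n$ coordinates from \eqref{eq:nablag.z}, the Hermitian inner product (with conjugation in the second slot, as used throughout Proposition~\ref{prop:mixed.vf}) gives
\begin{equation*}
\inpr[\mathbb{C}^n]{\nabla g(\mathbf{z})}{v(\mathbf{z})} = \sum_{i=1}^n\biggl(\overline{\frac{\partial f}{\partial z_i}}(\mathbf{z})+\frac{\partial f}{\partial \bar{z}_i}(\mathbf{z})\biggr)\bar{v}_i(\mathbf{z}),
\end{equation*}
and substituting into the projection formula yields exactly the vector $u(\mathbf{z})$ in \eqref{eq:u=aev}.

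For item \eqref{it:w}, I would simply invoke the orthogonal direct sum decomposition $\mathbb{C}^n = R \oplus R^\perp$ from Lemma~\ref{lem:NF.li}, which forces
\begin{equation*}
\nabla g(\mathbf{z}) = P_R\bigl(\nabla g(\mathbf{z})\bigr) + P_{R^\perp}\bigl(\nabla g(\mathbf{z})\bigr) = u(\mathbf{z}) + P_{R^\perp}\bigl(\nabla g(\mathbf{z})\bigr),
\end{equation*}
so $P_{R^\perp}(\nabla g(\mathbf{z})) = \nabla g(\mathbf{z}) - u(\mathbf{z}) = w(\mathbf{z})$.

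There is no real obstacle here, just careful bookkeeping with conjugation conventions; the only point requiring attention is to make sure the Hermitian inner product convention matches the one fixed at the start of the paper (conjugation on the second slot), so that the component formula \eqref{eq:nablag.z} for $\nabla g(\mathbf{z})$ and the product with $\bar{v}_i$ combine correctly to give the coefficient displayed in $u(\mathbf{z})$.
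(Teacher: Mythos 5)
Your proposal is correct and follows essentially the same route as the paper: both compute $P_R(\nabla g(\mathbf{z}))$ via the standard projection formula $\frac{\inpr[\C^n]{\nabla g(\mathbf{z})}{v(\mathbf{z})}}{\norm{v(\mathbf{z})}^2}\,v(\mathbf{z})$ using the coordinate expression \eqref{eq:nablag.z}, and then obtain item \eqref{it:w} from the orthogonal decomposition $\C^n=R\oplus R^\perp$. No substantive difference from the paper's argument.
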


\begin{proof}
Firstly notice that $\inpr[\C^n]{v(\mathbf{z})}{v(\mathbf{z})}=\norm{v(\mathbf{z})}^2$. The component of $\nabla g(\mathbf{z})$ in the direction of $v(\mathbf{z})$
is given by
\begin{equation}\label{eq:proj.on.ev}
\begin{split}
\frac{\inpr[\C^n]{\nabla g(\mathbf{z})}{v(\mathbf{z})}}{\inpr[\C^n]{v(\mathbf{z})}{v(\mathbf{z})}}
&=\frac{1}{\norm{v(\mathbf{z})}^2}\sum_{i=1}^n\biggl(\overline{\frac{\partial f}{\partial z_i}}(\mathbf{z})+\frac{\partial f}{\partial \bar{z}_i}(\mathbf{z})\biggr)\bar{v}_i(\mathbf{z}).
\end{split}
\end{equation}
Hence \eqref{it:u} and \eqref{it:w} follow from \eqref{eq:proj.on.ev}.
\end{proof}

\begin{corollary}\label{cor:n+2.frame}
Let $f\colon \C^n \to \C$ be a mixed function with an isolated critical point at the origin. Let $v$ be a continuous  vector field on $V_f$ which is non-zero on $V_f^*$. Let $\mathbf{z}\in V_f^*$. Then, $(\nabla g(\mathbf{z}),v(\mathbf{z}))$ 
is a complex $2$-frame in $\C^n$ if and only if $w(\mathbf{z})\neq0$.
\end{corollary}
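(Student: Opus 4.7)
The plan is to combine Lemmas~\ref{lem:NF.li} and \ref{lem:u} directly; this corollary is essentially a reformulation of the orthogonal decomposition in terms of $\nabla g(\mathbf{z})$. The key observation is that $(\nabla g(\mathbf{z}), v(\mathbf{z}))$ is a complex $2$-frame if and only if both vectors are nonzero and $\C$-linearly independent. Since $v$ is non-zero on $V_f^*$ and $\mathbf{z}\in V_f^*$, the vector $v(\mathbf{z})$ is automatically nonzero, so the question reduces to whether $\nabla g(\mathbf{z})$ is nonzero and not a $\C$-multiple of $v(\mathbf{z})$.

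First I would treat the degenerate case $\nabla g(\mathbf{z})=0$: here the pair obviously fails to be a $2$-frame, and from Lemma~\ref{lem:u} the formulas give $u(\mathbf{z})=0$ and hence $w(\mathbf{z})=\nabla g(\mathbf{z})-u(\mathbf{z})=0$, so both sides of the claimed biconditional fail. Next, assume $\nabla g(\mathbf{z})\neq 0$ and apply Lemma~\ref{lem:NF.li} to the vector $\nabla g(\mathbf{z})$ in place of the generic $u$ there: the vector $\nabla g(\mathbf{z})$ fails to be a complex multiple of $v(\mathbf{z})$ precisely when $P_{R^\perp}\bigl(\nabla g(\mathbf{z})\bigr)\neq 0$. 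By Lemma~\ref{lem:u}\eqref{it:w}, this projection is exactly $w(\mathbf{z})$, and so the condition becomes $w(\mathbf{z})\neq 0$.

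Combining both cases, $(\nabla g(\mathbf{z}), v(\mathbf{z}))$ is a complex $2$-frame if and only if $w(\mathbf{z})\neq 0$. There is no genuine obstacle here; the only subtlety worth flagging is making sure the $\nabla g(\mathbf{z})=0$ case is handled separately, since Lemma~\ref{lem:NF.li} is stated for $u$ different from the zero vector. Once that is observed, the proof is a single-line invocation of the previous two lemmas.
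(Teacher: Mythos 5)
Your proof is correct and follows essentially the same route as the paper: both reduce the statement to Lemma~\ref{lem:NF.li} applied to $u=\nabla g(\mathbf{z})$ together with the identification $P_{R^\perp}\bigl(\nabla g(\mathbf{z})\bigr)=w(\mathbf{z})$ from Lemma~\ref{lem:u}. The only difference is that the paper simply notes $\nabla g(\mathbf{z})\neq 0$ for $\mathbf{z}\in V_f^*$ (since the critical point of $f$ is isolated, $Df_{\mathbf{z}}$ has rank $2$ there), so the degenerate case $\nabla g(\mathbf{z})=0$ that you treat separately is in fact vacuous.
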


\begin{proof}
Since $\mathbf{z}\in V_f^*$ we have that $v(\mathbf{z})\neq0$ and $\nabla g(\mathbf{z})\neq0$. 
By Lemma~\ref{lem:NF.li} $w(\mathbf{z})\neq0$ if and only if the vectors $\{\nabla g(\mathbf{z}),v(\mathbf{z})\}$ are $\C$-linearly independent.
\end{proof}

One particular case of Corollary~\ref{cor:n+2.frame} is when $v$ is  a complex vector field.

\begin{lemma}\label{lem:cvf.PH}
Let $f\colon \C^n \to \C$ be a mixed function with an isolated critical point at the origin. Let $v$ be a continuous complex vector field on $V_f$ which is non-zero on $V_f^*$.
Then, for every $\mathbf{z}\in V_f^*$ we have:
\begin{enumerate}
 \item $u(\mathbf{z})=0$,
 \item The vectors $\nabla g(\mathbf{z})$ and $v(\mathbf{z})$ are orthogonal,
 \item $\nabla g(\mathbf{z})=w(\mathbf{z})\neq0$.
 \end{enumerate}
\end{lemma}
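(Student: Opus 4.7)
The plan is to read the three assertions directly off the explicit formula for $u(\mathbf{z})$ in Lemma~\ref{lem:u}, using Corollary~\ref{cor:complex.vf} as the key input. Since $v$ is a complex vector field on $V_f$ (so both $v(\mathbf{z})$ and $iv(\mathbf{z})$ are tangent to $V_f^*$), Corollary~\ref{cor:complex.vf}\eqref{it:eqs.holo} gives the two vanishing identities
\begin{equation*}
\sum_{j=1}^n\overline{\frac{\partial f}{\partial z_j}}(\mathbf{z})\bar{v}_j(\mathbf{z})=0,\qquad \sum_{j=1}^n\frac{\partial f}{\partial \bar{z}_j}(\mathbf{z})\bar{v}_j(\mathbf{z})=0.
\end{equation*}
Adding these and substituting into \eqref{eq:u=aev} collapses the scalar coefficient to $0$, so $u(\mathbf{z})=0$; this is assertion (1).

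For assertion (2), note that $u(\mathbf{z})$ is by construction the orthogonal projection $P_R(\nabla g(\mathbf{z}))$ onto the complex line $R=\langle v(\mathbf{z})\rangle$ with respect to $\inpr[\C^n]{-}{-}$. Since $u(\mathbf{z})=0$ we have $\inpr[\C^n]{\nabla g(\mathbf{z})}{v(\mathbf{z})}=0$, and taking real parts yields $\inpr[\R^{2n}]{\nabla g(\mathbf{z})}{v(\mathbf{z})}=0$, i.e. the two vectors are orthogonal in both the Hermitian and the real Euclidean sense.

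For assertion (3), from the definition $w(\mathbf{z})=\nabla g(\mathbf{z})-u(\mathbf{z})$ in Lemma~\ref{lem:u}\eqref{it:w}, the vanishing of $u(\mathbf{z})$ immediately gives $w(\mathbf{z})=\nabla g(\mathbf{z})$. Finally, since $\mathbf{z}\in V_f^*$ is a regular point of $f$, the matrix $Df_\mathbf{z}$ in \eqref{eq:differential} has rank $2$, so its two rows $\nabla g(\mathbf{z})$ and $\nabla h(\mathbf{z})$ are $\R$-linearly independent in $\R^{2n}$; in particular $\nabla g(\mathbf{z})\neq 0$, and thus $w(\mathbf{z})\neq 0$.

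There is no real obstacle here: the whole content of the lemma is encoded in the holomorphic-type equations \eqref{eq:holo} satisfied by any complex vector field together with the explicit formula \eqref{eq:u=aev}. The only subtle point to state carefully is the last nonvanishing, which is not a property of $v$ but of $f$ at a regular point of $V_f$.
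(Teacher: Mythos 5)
Your proof is correct and follows essentially the same route as the paper: both arguments feed Corollary~\ref{cor:complex.vf} into the explicit formula \eqref{eq:u=aev} to kill the scalar coefficient (your use of the two equations \eqref{eq:holo} summed is literally the first equation of \eqref{eq:mixed} that the paper invokes), and both conclude $w(\mathbf{z})=\nabla g(\mathbf{z})\neq0$ from regularity of $\mathbf{z}$. The only difference is the order of deductions (you derive $u=0$ first and orthogonality second, the paper does the reverse), which is immaterial.
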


\begin{proof}
Since $v$ is a complex vector field, by Corollary~\ref{cor:complex.vf} the (conjugate of the) first equation of \eqref{eq:mixed} is satisfied. Hence, 
the vectors $\nabla g(\mathbf{z})$ and $v(\mathbf{z})$ are orthogonal and 
by \eqref{eq:u=aev} we have that $u(\mathbf{z})=0$. 
If $\mathbf{z}\in V_f^*$, then $\nabla g(\mathbf{z})\neq0$ and by Lemma~\ref{lem:u}-\eqref{it:w}
we have that $\nabla g(\mathbf{z})=w(\mathbf{z})$.
\end{proof}

\begin{proposition}\label{prop:frame.complex.vf}
Let $f\colon \C^n \to \C$ be a mixed function with an isolated critical point at the origin. Let $v$ be a continuous complex vector field on $V_f$ which is non-zero on $V_f^*$.
Then, for every $\mathbf{z}\in V_f^*$, $(\nabla g(\mathbf{z}), v(\mathbf{z}))$ is a complex $2$-frame in $\C^n$.
\end{proposition}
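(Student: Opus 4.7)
The plan is to chain together Lemma~\ref{lem:cvf.PH} and Corollary~\ref{cor:n+2.frame}, which together already encode almost everything we need; no genuinely new computation should be required.

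First I would fix $\mathbf{z}\in V_f^*$. Since $\mathbf{0}$ is an isolated critical point of $f$, at every regular point of $V_f$ the differential $Df_\mathbf{z}$ has rank $2$, so in particular $\nabla g(\mathbf{z})\neq 0$ (otherwise the first row of \eqref{eq:differential} would vanish and $Df_\mathbf{z}$ could not have rank $2$ — this is essentially the content of Proposition~\ref{prop:cplx.partials} together with the isolated-critical-point assumption). By hypothesis $v(\mathbf{z})\neq 0$ as well, so both vectors are individually nonzero.

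Next I would invoke Lemma~\ref{lem:cvf.PH}: because $v$ is a \emph{complex} vector field, we have $u(\mathbf{z})=0$ and $w(\mathbf{z})=\nabla g(\mathbf{z})$, and in particular $w(\mathbf{z})\neq 0$ by the previous paragraph. Geometrically this says that $\nabla g(\mathbf{z})$ has no component along the complex line $R$ spanned by $v(\mathbf{z})$, i.e.\ $\nabla g(\mathbf{z})$ already lies entirely in $R^\perp$.

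Finally I would apply Corollary~\ref{cor:n+2.frame}, which asserts that $(\nabla g(\mathbf{z}),v(\mathbf{z}))$ is a complex $2$-frame in $\C^n$ precisely when $w(\mathbf{z})\neq 0$. Since we just established that $w(\mathbf{z})=\nabla g(\mathbf{z})\neq 0$, the conclusion follows. There is no serious obstacle here; the only point one has to be a little careful about is verifying that $\nabla g(\mathbf{z})\neq 0$ at regular points (the isolated-critical-point hypothesis is used for this, combined with the fact that for a complex vector field the orthogonality relation forces $\nabla g$ to be the full $R^\perp$-component, so any degeneracy would have to come from $\nabla g$ vanishing outright, which is excluded).
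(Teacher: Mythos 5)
Your proposal is correct and follows exactly the paper's own route: the paper proves this proposition by citing Lemma~\ref{lem:cvf.PH} (which gives $u(\mathbf{z})=0$ and $w(\mathbf{z})=\nabla g(\mathbf{z})\neq 0$) together with Corollary~\ref{cor:n+2.frame}. Your extra care in checking that $\nabla g(\mathbf{z})\neq 0$ at regular points via the rank of $Df_{\mathbf{z}}$ is exactly the justification already built into the statement of Lemma~\ref{lem:cvf.PH}, so nothing is missing.
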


\begin{proof}
It follows from Lemma~\ref{lem:cvf.PH} and Corollary~\ref{cor:n+2.frame}.
\end{proof}

From Proposition~\ref{prop:frame.complex.vf} we recover the case when $f$ is a holomorphic function.

\begin{corollary}\label{cor:frame.holo}
Let $f\colon \C^n \to \C$ be a holomorphic function with an isolated critical point at the origin. Let $v$ be a continuous vector field on $V_f$ which is non-zero on $V_f^*$.
Then, for every $\mathbf{z}\in V_f^*$, the ordered set of vectors $(\nabla g(\mathbf{z}), v(\mathbf{z}))$ is a complex $2$-frame in $\C^n$.
\end{corollary}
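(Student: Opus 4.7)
The plan is to deduce this corollary from Proposition~\ref{prop:frame.complex.vf} by showing that when $f$ is holomorphic, every continuous vector field $v$ on $V_f$ is automatically a complex vector field in the sense of the previous subsection. Once this is established, Proposition~\ref{prop:frame.complex.vf} applies directly and gives the conclusion.

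The key observation is that holomorphy of $f$ means $\frac{\partial f}{\partial \bar{z}_j}\equiv 0$ for $j=1,\dots,n$, so $\bar{\mathrm{d}}f(\mathbf{z})=0$ everywhere. Fix $\mathbf{z}\in V_f^*$. Since $v$ is a vector field on $V_f$ we have $v(\mathbf{z})\in T_\mathbf{z}V_f^*$, and by Proposition~\ref{prop:mixed.vf}-\eqref{it:eqs.df.dbf} there exist real numbers $c,d$ with
\begin{equation*}
\inpr[\C^n]{\overline{\mathrm{d}f(\mathbf{z})}}{v(\mathbf{z})}=c+di,\qquad \inpr[\C^n]{\bar{\mathrm{d}}f(\mathbf{z})}{v(\mathbf{z})}=-c+di.
\end{equation*}
The second equation is automatically zero because $\bar{\mathrm{d}}f(\mathbf{z})=0$, forcing $c=d=0$, which in turn makes the first equation vanish as well. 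Hence both identities in \eqref{eq:holo} of Corollary~\ref{cor:complex.vf} are satisfied, so $iv(\mathbf{z})$ is also tangent to $V_f^*$ at $\mathbf{z}$. Therefore $v$ is a complex vector field on $V_f^*$ in the sense of the definition given above.

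Applying Proposition~\ref{prop:frame.complex.vf} to this complex vector field $v$ yields that $(\nabla g(\mathbf{z}),v(\mathbf{z}))$ is a complex $2$-frame in $\C^n$ for every $\mathbf{z}\in V_f^*$, which is the desired conclusion. There is no real obstacle here: the only thing to verify is the simple algebraic reduction $\bar{\mathrm{d}}f=0\Rightarrow c=d=0$, and then the corollary is a direct specialization of the previously proven proposition.
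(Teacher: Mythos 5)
Your proposal is correct and follows essentially the same route as the paper: the paper's proof simply states that for $f$ holomorphic every vector field on $V_f$ is a complex vector field and then invokes Proposition~\ref{prop:frame.complex.vf}. You additionally spell out the verification of that claim (via $\bar{\mathrm{d}}f=0$ and the tangency equations of Proposition~\ref{prop:mixed.vf}), which the paper leaves implicit, but the argument is the same.
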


\begin{proof}
For $f$ holomorphic, every vector field on $V_f$ is a complex vector field. The corollary follows from Proposition~\ref{prop:frame.complex.vf}.
\end{proof}

\begin{remark}\label{rem:holo.ng=nf}
If $f\colon \C^n \to \C$ is a holomorphic function we have that $\bar{\mathrm{d}}f(\mathbf{z})=0$, thus by \eqref{eq:nabla.d} we have that 
$\nabla g(\mathbf{z})=\overline{\mathrm{d}(\mathbf{z})}=\overline{\nabla f(\mathbf{z})}$. So Corollary~\ref{cor:frame.holo} recovers the complex $2$-frame $(\overline{\nabla f(\mathbf{z})},v(\mathbf{z}))$. By Lemma~\ref{lemma:CambioW} this $2$-frame is equivalent to the $2$-frame $(v(\mathbf{z}),\overline{\nabla f(\mathbf{z})})$.
\end{remark}

However, there are cases when $w(\mathbf{z})=0$ as shown in the following example.

\begin{example}\label{example:fallaLI}
Let $f \colon \C^3 \to \C$ given by
\begin{equation*}
    f(z_1,z_2,z_3)= -z_1\bar{z}_1^4+z_2^4+z_3^4.
\end{equation*}
The function $f$ is a polar weighted homogeneous polynomial with radial weight type $(4,5,5;20)$ and angular weight type $(-4,3,3;12)$ with an isolated critical point at the origin (see~\cite{Cisneros-Molina:Join,Oka:NDMF}). Consider the vector fields given by the radial and angular action, respectively,
\begin{align*}
	v_{\mathbb{R}^+}(z_1,z_2,z_3) &:= (4z_1,5z_2,5z_3),\\
	v_{\mathbb{S}^1}(z_1,z_2,z_3) &:= (-4iz_1,3iz_2,3iz_3).
\end{align*} 
In this case:
\begin{enumerate}
	\item the vectors  $\{\nabla g(\mathbf{z}), v_{\mathbb{R}^+}(\mathbf{z})\}$ are $\C$-linearly independent but,
	\item the vectors $\{\nabla g(\mathbf{z}), v_{\mathbb{S}^1}(\mathbf{z})\}$ are $\C$-linearly dependent.
\end{enumerate}
We prove the second assertion: thus, we need to find a non-trivial solution to 
\begin{equation}
\label{eq:SitemaFalla1}
\alpha v_{\mathbb{S}^1} (\mathbf{z})+ \beta \nabla g(\mathbf{z}) =0.
\end{equation}
By~\eqref{eq:nablag.z} we get
\begin{equation}
    \nabla g(\mathbf{z}) = \left(-z_1^4-4z_1 \bar{z_1}^3,4\bar{z}_2^3,4\bar{z}_3^3\right).
\end{equation}
Assume that $z_1=x,z_2=z_3=y$ where $x$ and $y$ are real numbers. By~\eqref{eq:SitemaFalla1} we have the following system of equations
\begin{equation*}
\left[
\begin{array}{cc}
-4ix & -5x^4 \\
3iy & 4y^3 \\
\end{array}
\right]
\begin{bmatrix}
\alpha \\
\beta \\
\end{bmatrix} =\begin{bmatrix}
0\\
0\\
\end{bmatrix} .
\end{equation*}
We just need to find real numbers such that the determinant of the previous matrix is zero, i.e., we need to find a real solution of the equation
\begin{equation}
\label{eq:Det.Ejemplo}
iyx\left(-16y^2 + 15 x^3\right)=0.
\end{equation}
It is easy to check that 
\begin{equation*}
x=\frac{128}{225},\quad \quad y=\sqrt{\frac{131072}{759375}},
\end{equation*}
is a solution of equation~\eqref{eq:Det.Ejemplo}. Hence, at the point
\begin{equation}
\label{eq:PuntoFalla}
z_1=\frac{128}{225},\quad \quad z_2=z_3=\sqrt{\frac{131072}{759375}},
\end{equation}
the system of equations \eqref{eq:SitemaFalla1} has a non-trivial solution. Thus, the vectors
\begin{equation*}
\{\nabla g(\mathbf{z}), v_{\mathbb{S}^1}(\mathbf{z})\},
\end{equation*}
are $\C$-linearly dependent, so $w(\mathbf{z})=0$. Furthermore, it is easy to check that the point \eqref{eq:PuntoFalla} is a zero of the function $f$. 
\end{example} 

\subsection{Frame homotopic vector fields}
To avoid situations as in Example~\ref{example:fallaLI}, the strategy is to choose a different election of vector $v^{\perp}(\mathbf{z})$ instead of $\nabla g(\mathbf{z})$, such that  $v^{\perp}(\mathbf{z})$ is always $\C$-linearly independent from the vector $v(\mathbf{z})$. The following definition gives us the idea in our construction.

\begin{definition}
\label{def:BuenaEleccion}
Let $v^{\perp}\colon V_f^* \to \C^n$  be a continuous vector field on $V_f$ which is non-zero on $V_f^*$. Suppose that $v^{\perp}$ and $v$ are complex orthogonal in $\C^n$. We say that $v^{\perp}$ is \emph{frame homotopic to $v$} if there exists a homotopy between the real $3$-frames
\begin{equation*}
(v(\mathbf{z}),\nabla{g}(\mathbf{z}),\nabla{h}(\mathbf{z})) \quad \text{and} \quad  (v^{\perp}(\mathbf{z}),v(\mathbf{z}),iv(\mathbf{z})).
\end{equation*}
\end{definition}
The following lemma guarantee that given any continuous vector field $v$ on $V_f$ which is non-zero on $V_f^*$, there is always a frame homotopic vector field $v^{\perp}$.
\begin{lemma}
\label{lema.generalj}
Let $f\colon \C^n \to \C$ be a mixed function with an isolated critical point at the origin. Let $v$ a continuous vector field on $V_f$ which is non-zero on $V_f^*$.
Then, there exists a continuous vector field $v^{\perp}$ frame homotopic to $v$.
\end{lemma}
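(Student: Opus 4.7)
The idea is to realize $v^{\perp}$ as a nowhere zero section of a complex vector bundle determined by $v$, and then to refine this section to secure the required frame homotopy.

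First, I would reformulate the problem as a bundle question. For every $\mathbf{z}\in V_f^*$, set $R(\mathbf{z})=\C\cdot v(\mathbf{z})$ and let $E\to V_f^*$ be the complex vector bundle of rank $n-1$ whose fiber is the Hermitian orthogonal complement $R(\mathbf{z})^{\perp}\subset\C^n$. A continuous vector field $v^{\perp}$ that is complex orthogonal to $v$ and nowhere vanishing on $V_f^*$ is precisely a nowhere zero section of $E$. Note that for any such section, $(v^{\perp},v,iv)$ is automatically a real $3$-frame in $\R^{2n}$, since Hermitian orthogonality of $v^{\perp}$ and $v$ entails real orthogonality of $v^{\perp}$ with both $v$ and $iv$.

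Second, I would establish the existence of an initial nowhere zero section $v_0^{\perp}$. A punctured neighborhood of $\mathbf{0}$ in $V_f^*$ deformation retracts onto the link $L_f$, a closed manifold of real dimension $2n-3$. The obstruction to a nowhere zero section of a complex rank $n-1$ bundle is its top Chern class $c_{n-1}(E)\in H^{2n-2}(L_f;\Z)$, which vanishes for dimension reasons. Hence $v_0^{\perp}$ exists on $L_f$, and extends radially to a punctured neighborhood of $\mathbf{0}$ in $V_f^*$.

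Third, and this is the core of the argument, I would adjust $v_0^{\perp}$ to produce the frame homotopy. The maps
\begin{equation*}
\Phi_0=(v,\nabla g,\nabla h),\qquad \Phi_1=(v_0^{\perp},v,iv),
\end{equation*}
from $L_f$ to the real Stiefel manifold $V(3,2n)$ need not a priori be homotopic. But by \eqref{eq:pii.wkm} the space $V(3,2n)$ is $(2n-4)$-connected and $\dim L_f=2n-3$, so Proposition \ref{prop:Vkm.smn} and Remark \ref{rmk:deg2.v1} reduce the homotopy classification on each connected component $L_i\subset L_f$ to a single mod-$2$ degree. Replacing $v_0^{\perp}$ by $\phi\cdot v_0^{\perp}$, for a continuous map $\phi\colon L_f\to\C^*$, preserves complex orthogonality and non-vanishing while changing the class of $\Phi_1$ on each $L_i$. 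By choosing $\phi$ with appropriate local class on each $L_i$ (and using, when needed, a fiberwise degree-modifying self-map of the sphere bundle of $E|_{L_i}$ coming from $\pi_{2n-3}(S^{2n-3})=\Z$), we can arrange $\Phi_0\simeq\Phi_1$, which yields the desired $v^{\perp}$ together with the frame homotopy.

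The main obstacle is the third step: precisely tracking how the mod-$2$ degree of $\Phi_1$ responds to rescalings of $v_0^{\perp}$ and verifying that the available freedom really is enough to match the class of $\Phi_0$. This requires an analysis of the reordering of the three entries of the frame, in the spirit of Lemma \ref{lemma:CambioW} but for the real Stiefel manifold $V(3,2n)$, so as to rule out any hidden obstruction coming from the different positions of $v$ in the two frames.
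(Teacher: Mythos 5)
Your Steps one and two are sound (the top Chern class argument is a legitimate alternative to the paper's cohomological obstruction computation, and the observation that Hermitian orthogonality to $v$ gives real orthogonality to both $v$ and $iv$ is exactly what the paper uses). The gap is in Step three, which you yourself flag as the core. The first mechanism you propose for correcting the mod-$2$ class --- replacing $v_0^{\perp}$ by $\phi\cdot v_0^{\perp}$ with $\phi\colon L_f\to\C^*$ --- does not work: after normalizing, the classifying map of Proposition~\ref{prop:Vkm.smn} changes from $\hat{\varphi}_i$ to $\mu\circ(\phi,\hat{\varphi}_i)$, where $\mu\colon \mathbb{S}^1\times\mathbb{S}^{2n-3}\to\mathbb{S}^{2n-3}$ is scalar multiplication; since (for $n\ge 3$) $H_{2n-3}(\mathbb{S}^1\times\mathbb{S}^{2n-3};\Z)\cong\Z$ is generated by the class of $\{pt\}\times\mathbb{S}^{2n-3}$ (the cross term vanishes because $H_{2n-4}(\mathbb{S}^{2n-3})=0$), the degree, hence the mod-$2$ class, is unchanged. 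The second mechanism (a fiberwise self-map of the sphere bundle of $E$) is left vague and its global existence is not addressed. What does work is a local modification: splice into $v_0^{\perp}$, over a small disk in $L_i$ where $E$ is trivial, a generator of $\pi_{2n-3}(\mathbb{S}^{2n-3})\cong\Z$; the exact sequence of the fibration $\mathbb{S}^{2n-3}\to V(3,2n)\to V(2,2n)$ together with the $(2n-3)$-connectivity of $V(2,2n)$ shows that the fiber inclusion is surjective on $\pi_{2n-3}$, so this shifts the class of $(v_0^{\perp},v,iv)$ in $[L_i,V(3,2n)]\cong\Z_2$ by the generator. With that replacement your plan closes, and the reordering worry is immaterial for existence since every class in $\Z_2^{\,r}$ is then attainable.

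You should also know that the paper's proof avoids all of this bookkeeping. It notes that $(\nabla g,\nabla h)$ and $(v,iv)$, as maps $L_f\to V(2,2n)$, are homotopic for free ($V(2,2n)$ is $(2n-3)$-connected and $\dim L_f=2n-3$, so $[L_f,V(2,2n)]=0$), and then lifts this homotopy through the fibration $\tilde{p}\colon V(3,2n)\to V(2,2n)$ of \eqref{eq:fib.gen.real}, starting from the known lift $(v,\nabla g,\nabla h)$ of the initial map. The time-one end of the lifted homotopy is automatically a frame of the form $(v^{\perp},v,iv)$, and the lifted homotopy itself is the required frame homotopy; $v^{\perp}$ and the homotopy are produced simultaneously, with no section to construct first and no degree to match afterwards.
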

\begin{proof}
Since $V_f$ has an isolated singularity at the origin $V_f^*=V_f \setminus \{\mathbf{0}\}$, since $V_f$ is homeomorphic to the cone of $L_f$ (see~\cite[Theorem~2.10]{Milnor:SPCH})
we have that the link $L_f$ of $f$ is a deformation retract of $V_f^*$ (in a small enough representative of the germ of $f$). Hence, in order to prove the lemma it is enough to work with $L_f$.
Up to homotpy, we have the following maps:
\begin{equation*}
    (v,iv)\colon L_f \to V(2,2n), \quad \quad (\nabla{g},\nabla{h})\colon L_f \to V(2,2n),
\end{equation*}
Denote by $[L_f,V(2,2n)]$ the homotopy classes of maps from $L_f$ to $V(2,2n)$.
Since $V(2,2n)$ is $2n-3$ connected and $L_f$ is a smooth compact manifold of dimension $2n-3$, by obstruction theory \cite[page~425 \& Theorem~8.4.3]{Spanier:AlgTop} there is a biyection
\begin{equation*}
[L_f,V(2,2n)]\cong H^{2n-2}(L_f;\Z)=0,
\end{equation*}
thus, there exists a homotopy
\begin{equation*}
    H\colon L_f \times I \to V(2,2n),
\end{equation*}
such that $H(\mathbf{z},0)=(\nabla{g}(\mathbf{z}),\nabla{h}(\mathbf{z}))$ and $H(\mathbf{z},1)=(v(\mathbf{z}),iv(\mathbf{z}))$. By~\eqref{eq:fib.gen}, the projection map
\begin{equation*}
p\colon V(3,2n) \to V(2,2n),
\end{equation*}
is a fibration. Furthermore, the map \eqref{eq:GSV-map.real} given by the real GSV-index,
\begin{equation*}\
\tilde{\varphi}_v= (v,\nabla{g},\nabla {h}) \colon L_f \to V(3,2n),
\end{equation*}
is a lifting of $H(\mathbf{z},0)$. Therefore, by the homotopy lifting property there is a lifting of $H$ to a homotopy
\begin{equation}
\label{eq:HomotopiaBuenaEleccion}
\tilde{H}\colon L_f \times I \to V(3,2n),
\end{equation}
such that $\tilde{H}(\mathbf{z},0)=(v(\mathbf{z}),\nabla{g}(\mathbf{z}),\nabla{h}(\mathbf{z}))$ and $\tilde{H}(\mathbf{z},1)=(v^{\perp}(\mathbf{z}),v(\mathbf{z}),iv(\mathbf{z}))$ for some vector field $v^\perp$
that is never zero and it is orthogonal to $v$ and $iv$ as vectors in $\R^{2n}$. Therefore, $v^{\perp}$ and $v$ are orthogonal as vectors in $\C^n$. Indeed, notice that
\begin{align*}
    \re \inpr[\C^n]{v^{\perp}(\mathbf{z})}{v(\mathbf{z})}&= \inpr[\R^{2n}]{v^{\perp}(\mathbf{z})}{v(\mathbf{z})}=0\\
    \im \inpr[\C^n]{v^{\perp}(\mathbf{z})}{v(\mathbf{z})}&= \inpr[\R^{2n}]{v^{\perp}(\mathbf{z})}{iv(\mathbf{z})}=0.
\end{align*}
Thus, $\inpr[\C^n]{v^{\perp}(\mathbf{z})}{v(\mathbf{z})}=0$. This equality and the homotopy~\eqref{eq:HomotopiaBuenaEleccion} tell us that $v^{\perp}$ is frame homotopic to $v$. This proves the lemma. 
\end{proof}

\subsection{The mixed GSV index} 
Let $f\colon \C^n \to \C$ be a mixed function with an isolated critical point at the origin. As before, denote by $V_f=f^{-1}(0)$.
Let $\mathbb{S}^{2n-1}_\varepsilon\subset\C^n$ be a sphere of  sufficiently small radius $\varepsilon>0$ with center at the origin. Let $L_f=V_f\cap\mathbb{S}^{2n-1}_\varepsilon$ be the link of $f$ at $0$. We have that $L_f$ is a compact oriented smooth manifold of dimension $2n-3$.

Let $v$ be a continuous vector field on $V_f$ with isolated singularity at the origin. Let $v^{\perp}$ be any vector field frame homotopic to $v$. By Definition~\ref{def:BuenaEleccion}, for every point $\mathbf{z}\in V_f^*$ we have that $(v^{\perp}(\mathbf{z}),v(\mathbf{z}))$ is a complex $2$-frame in $\C^{n}$
and up to homotopy, it can be assumed to be orthonormal. Hence we have a continuous map from $V_f^*$ to the Stiefel manifold $W(2,n)$ of complex orthonormal $2$-frames in $\C^{n}$,
which restricted to the link $L_f$ gives a continuous map
\begin{equation}\label{eq:mGSV.map}      
 \phi_v=(v^{\perp}(\mathbf{z}),v(\mathbf{z}))\colon L_f\to W(2,n).
\end{equation}
By \eqref{eq:pii.wkm}, the manifold $W(2,n)$ is $2n-4$-connected and $\pi_{2n-3}\bigl(W(2,n)\bigr)\cong\Z$, thus $\phi_v$ has a degree $\deg(\phi_v)$ defined in \eqref{eq:deg}. First, we prove that the degree of the map $\phi_v$ does not depend of the election of $v^{\perp}$. For this we need a lemma.

Consider the following map between real Stiefel manifolds
\begin{equation}\label{eq:p2}
\begin{gathered}
\tilde{p}_2\colon V(4,2n)\longrightarrow V(3,2n)\\
\tilde{p}_2(v_1,v_2,v_3,v_4)=(v_1,v_3,v_4)
\end{gathered}
\end{equation}
which removes the second vector of the frame. Let $\Phi=\tilde{p}_2\circ q$ be the following composition of the embedding \eqref{eq:comp.real} with the map \eqref{eq:p2}
\begin{equation}\label{eq:Phi}
\Phi\colon W(2,n)\xrightarrow{q}V(4,2n)\xrightarrow{\tilde{p}_2} V(3,2n).
\end{equation}
Hence we have
\begin{equation*}
 \Phi(v_1,v_2)=(v_1,v_2,iv_2).
\end{equation*}

\begin{lemma}\label{lem:fi.fi}
Let $M$ be a oriented closed $(2n-3)$-manifold and consider a map $\phi\colon M\to W(2,n)$ and the composition $\varphi=\Phi\circ\phi\colon M\to V(3,2n)$.
Let $\hat{\phi}\colon M\to\mathbb{S}^{2n-3}$ and $\hat{\varphi}\colon M\to\mathbb{S}^{2n-3}$ be the maps given, respectively, by Propositions~\ref{proposition:mixedGSVobstruction} and \ref{prop:Vkm.smn}.
Then $\hat{\phi}$ and $\hat{\varphi}$ are homotopic.
\end{lemma}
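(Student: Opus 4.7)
The plan is to exhibit explicit representatives of $\hat{\phi}$ and $\hat{\varphi}$ using the geometric descriptions supplied by Remarks~\ref{rmk:deg.v1} and \ref{rmk:deg2.v1}, and then to observe that these representatives actually coincide.

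First, I would write $\phi(x)=(v_1(x),v_2(x))$, so that by the formula defining $\Phi$ one has $\varphi(x)=\Phi(\phi(x))=(v_1(x),v_2(x),iv_2(x))$. Applying Remark~\ref{rmk:deg.v1} with $k=2$ and $m=n$, I may take $\hat{\phi}\colon M\to\mathbb{S}^{2n-3}$ to be the map $x\mapsto v_1(x)$, with $v_1(x)$ regarded as a unit vector in the Hermitian orthogonal complement of the complex line $\C\cdot v_2(x)\subset\C^n$, a complex $(n-1)$-dimensional subspace whose unit sphere has real dimension $2n-3$. Applying Remark~\ref{rmk:deg2.v1} with $k=3$ and $m=2n$ (so $m-k=2n-3$) to the map $\varphi$, I may likewise take $\hat{\varphi}\colon M\to\mathbb{S}^{2n-3}$ to be the map $x\mapsto v_1(x)$, with $v_1(x)$ now regarded as a unit vector in the Euclidean orthogonal complement in $\R^{2n}$ of the real $2$-plane $\mathrm{span}_\R\{v_2(x),iv_2(x)\}$.

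The crux of the argument is to observe that these two orthogonal complements coincide as subsets of $\R^{2n}\cong\C^n$. Indeed, the complex line $\C\cdot v_2(x)$, viewed as a real subspace of $\R^{2n}$, is precisely $\mathrm{span}_\R\{v_2(x),iv_2(x)\}$; and since $\inpr[\R^{2n}]{-}{-}=\re\inpr[\C^n]{-}{-}$ (as recorded in the notation section), a vector is Hermitian-orthogonal to $v_2(x)$ if and only if it is Euclidean-orthogonal to both $v_2(x)$ and $iv_2(x)$. Consequently the two prescriptions produce the same map $v_1\colon M\to\mathbb{S}^{2n-3}$, so $\hat{\phi}$ and $\hat{\varphi}$ are homotopic (in fact equal for this choice of representatives), as required.

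I do not foresee any genuine obstacle here: once the explicit descriptions of $\hat{\phi}$ and $\hat{\varphi}$ are extracted from the two Remarks, the entire content of the lemma collapses to the elementary identification of the complex and real orthogonal complements, which is immediate from the compatibility between the Hermitian and Euclidean inner products.
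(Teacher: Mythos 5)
Your proposal is correct and is essentially the paper's own argument: the identification of the Hermitian orthogonal complement of $\C\cdot v_2(x)$ with the Euclidean orthogonal complement of $\mathrm{span}_\R\{v_2(x),iv_2(x)\}$ is exactly the commutativity $\Phi\circ\gamma=\tilde{\gamma}$ that the paper records in its diagram of Stiefel fibrations, and your (implicit) appeal to the uniqueness clauses of Propositions~\ref{proposition:mixedGSVobstruction} and \ref{prop:Vkm.smn} plays the same role there. The only cosmetic difference is that you argue pointwise with the explicit representatives from Remarks~\ref{rmk:deg.v1} and \ref{rmk:deg2.v1} instead of chasing the commutative diagram.
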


\begin{proof}
Using the definitions of the maps it is straightforward to see that in the following diagram the squares commute and by Proposition~\ref{proposition:mixedGSVobstruction} the triangle commutes up to homotopy, that is,
$\phi\simeq \gamma\circ\hat{\phi}$:
\begin{equation}\label{eq:diag.Stiefel.cr}
\xymatrix{
 & M\ar[d]^{\phi}\ar[ld]_{\hat{\phi}} & \\
\mathbb{S}^{2n-3}\cong W(1,n-1)\ar@{^{(}->}[r]^(.7){\gamma}\ar@{=}[d] & W\bigl(2,n\bigr)\ar[r]^{p}\ar[d]^{\Phi} & W\bigl(1,n\bigr)\ar[d]^{q}\\
\mathbb{S}^{2n-3}\cong V(1,2n-2)\ar@{^{(}->}[r]^(.7){\tilde{\gamma}} & V\bigl(3,2n\bigr)\ar[r]^{\tilde{p}} & V\bigl(2,2n\bigr)
}
\end{equation}
where the first row is fibration \eqref{eq:fib.gen} with $m=n$, $k=2$ and $l=1$; and the second row is fibration \eqref{eq:fib.gen.real}
with $m=2n$, $k=3$ and $l=2$. Hence we have that $\Phi\circ\phi\simeq \Phi\circ\gamma\circ\hat{\phi}=\tilde{\gamma}\circ\hat{\phi}$. By Proposition~\ref{prop:Vkm.smn} we have that
$\Phi\circ\phi\simeq\tilde{\gamma}\circ\hat{\varphi}$ defines $\hat{\varphi}$ and since it is unique up to homotopy we have $\hat{\phi}\simeq\hat{\varphi}$. 
\end{proof}

\begin{lemma}
\label{lema:IndependenciaV}
Let $v_1^{\perp}$ and $v_2^{\perp}$ be two vector fields, both frame homotopic to $v$. Denote by
\begin{align*}
    \phi_{1}\colon L_f &\to W(2,n) \quad \quad &\phi_{2}\colon & L_f \to W(2,n)\\
    \mathbf{z} &\mapsto (v_1^{\perp}(\mathbf{z}),v(\mathbf{z})) \quad \quad & & \mathbf{z} \mapsto (v_2^{\perp}(\mathbf{z}),v(\mathbf{z}))
\end{align*}
Then, we have $\deg(\phi_{1})=\deg(\phi_{2})$.
\end{lemma}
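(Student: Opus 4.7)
The plan is to reduce the statement to Lemma~\ref{lem:fi.fi} by transporting the problem from $W(2,n)$ to $V(3,2n)$, where the frame-homotopy hypothesis lives. First, I would unpack Definition~\ref{def:BuenaEleccion}: saying $v_i^{\perp}$ is frame homotopic to $v$ means there is a homotopy of real $3$-frames
\[
(v,\nabla g,\nabla h)\;\simeq\;(v_i^{\perp},v,iv)\qquad\text{in } V(3,2n),
\]
for $i=1,2$. Concatenating the first homotopy (reversed) with the second yields a homotopy $(v_1^{\perp},v,iv)\simeq(v_2^{\perp},v,iv)$ as maps $L_f\to V(3,2n)$.

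Next, I would recognize this as a statement about $\phi_1$ and $\phi_2$ via the map $\Phi$ of \eqref{eq:Phi}. Since $\Phi(u,w)=(u,w,iw)$, we have
\[
\Phi\circ\phi_i(\mathbf{z})=(v_i^{\perp}(\mathbf{z}),v(\mathbf{z}),iv(\mathbf{z})),
\]
so the preceding homotopy is exactly $\Phi\circ\phi_1\simeq\Phi\circ\phi_2$ in $V(3,2n)$.

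Then I would pass to the associated sphere-valued maps. Let $\hat{\phi}_i\colon L_f\to\mathbb{S}^{2n-3}$ be produced from $\phi_i$ by Proposition~\ref{proposition:mixedGSVobstruction} and let $\hat{\varphi}_i\colon L_f\to\mathbb{S}^{2n-3}$ be produced from $\Phi\circ\phi_i$ by Proposition~\ref{prop:Vkm.smn}. By Lemma~\ref{lem:fi.fi}, $\hat{\phi}_i\simeq\hat{\varphi}_i$ for each $i$. On the other hand, $\Phi\circ\phi_1\simeq\Phi\circ\phi_2$ together with the chain
\[
\tilde{\gamma}\circ\hat{\varphi}_1\;\simeq\;\Phi\circ\phi_1\;\simeq\;\Phi\circ\phi_2\;\simeq\;\tilde{\gamma}\circ\hat{\varphi}_2
\]
and the uniqueness up to homotopy in Proposition~\ref{prop:Vkm.smn} give $\hat{\varphi}_1\simeq\hat{\varphi}_2$. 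Assembling, $\hat{\phi}_1\simeq\hat{\phi}_2$, hence $\deg(\hat{\phi}_1)=\deg(\hat{\phi}_2)$, and by Remark~\ref{rmk:deg.deg} this is exactly $\deg(\phi_1)=\deg(\phi_2)$.

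I do not anticipate a serious obstacle: Lemma~\ref{lem:fi.fi} is doing the real topological work, and everything else is bookkeeping with homotopies. The one subtlety to keep in mind is ensuring that the frames appearing in Definition~\ref{def:BuenaEleccion} actually define maps into $V(3,2n)$ (after orthonormalization), which is automatic from $v\neq0$, the linear independence of $\nabla g,\nabla h$ on $V_f^*$, and the complex orthogonality of $v$ and $v_i^{\perp}$.
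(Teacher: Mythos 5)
Your proof is correct and follows essentially the same route as the paper's: both pass through $\Phi$ to $V(3,2n)$, use the frame-homotopy hypothesis to get $\Phi\circ\phi_1\simeq\Phi\circ\phi_2$, invoke the uniqueness in Proposition~\ref{prop:Vkm.smn} to conclude $\hat{\varphi}_1\simeq\hat{\varphi}_2$, and then use Lemma~\ref{lem:fi.fi} to transfer this back to $\hat{\phi}_1\simeq\hat{\phi}_2$ and equality of degrees. The only (welcome) addition is that you spell out the concatenation of the two frame homotopies through $(v,\nabla g,\nabla h)$, a step the paper leaves implicit.
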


\begin{proof}
By Proposition~\ref{proposition:mixedGSVobstruction}, there exist maps 
\begin{equation*}
    \hat{\phi}_{1}\colon L_f \to \mathbb{S}^{2n-3} \quad \quad \hat{\phi}_{2}\colon L_f \to \mathbb{S}^{2n-3},
\end{equation*}
such that
\begin{equation}
    \label{eq:NoDependeV1}
 \deg(\phi_{1})=\deg(\hat{\phi}_{1}) \quad \text{and} \quad \deg(\phi_{2})=\deg(\hat{\phi}_{2}).
\end{equation}
Consider the compositions with the map $\Phi\colon W(2,n)\to V(3,2n)$
\begin{align*}
    \varphi_{1}=\Phi\circ\phi_1\colon L_f &\to V(3,2n) \quad \quad &\varphi_{2}=\Phi\circ\phi_2\colon & L_f \to V(3,2n)\\
    \mathbf{z} &\mapsto (v_1^{\perp}(\mathbf{z}),v(\mathbf{z}),iv(\mathbf{z})) \quad \quad & & \mathbf{z} \mapsto (v_2^{\perp}(\mathbf{z}),v(\mathbf{z}),iv(\mathbf{z})).
\end{align*}
Since $v_1^{\perp}$ and $v_2^{\perp}$ are frame homotopic we have that $\varphi_{1}\simeq\varphi_{2}$. On the other hand, by Proposition~\ref{prop:Vkm.smn} there exist maps
$\hat{\varphi}_1\colon L_f\to\mathbb{S}^{2n-3}$ and $\hat{\varphi}_2\colon L_f\to\mathbb{S}^{2n-3}$ such that $\tilde{\gamma}\circ\hat{\varphi}_1\simeq \varphi_{1}$ and 
$\tilde{\gamma}\circ\hat{\varphi}_2\simeq \varphi_{2}$. Thus $\tilde{\gamma}\circ\hat{\varphi}_1\simeq \varphi_{1}\simeq\varphi_2$. Since $\hat{\varphi}_2$ is unique up to homotopy we have
$\hat{\varphi}_1\simeq\hat{\varphi}_2$. By Lemma~\ref{lem:fi.fi} we have that $\hat{\phi}_1\simeq\hat{\varphi}_1$ and $\hat{\phi}_2\simeq\hat{\varphi}_2$. Thus $\hat{\phi}_1\simeq\hat{\phi}_2$ and
$\deg(\phi_{1})=\deg(\hat{\phi}_{1})=\deg(\hat{\phi}_{2})=\deg(\phi_{2})$ which proves the lemma.
\end{proof}


\begin{definition}
\label{def:MGSVIndex}
Let $f\colon \C^n \to \C$ be a mixed function with isolated critical point at the origin and let $v$ be a continuous vector field on $V_f$ with isolated singularity at the origin.
Let $v^{\perp}$ be any vector field frame homotopic to $v$ and let $\phi_v=(v^{\perp}(\mathbf{z}),v(\mathbf{z}))\colon L_f\to W(2,n)$ be the map defined in \eqref{eq:mGSV.map}.
The \textit{mixed GSV-index} of $v$ is given by
\begin{equation*}
\Ind[mGSV](v,\mathbf{0})=\deg(\phi_v).
\end{equation*}
\end{definition}
\begin{remark}
By Lemma~\ref{lema:IndependenciaV}, Definition~\ref{def:MGSVIndex} does not depend on the election of $v^{\perp}$.
\end{remark}

\subsection{Comparison with the classical GSV index for holomorphic maps}
In this subsection we prove that if $f\colon\C^n\to\C$ is a holomorphic map with isolated critical point at the origin and $v$ is a vector field on $V_f$ with an isolated zero at the origin, 
then the gradient vector field $\overline{\nabla f}$ is frame homotopic to $v$. Thus, the classical GSV-index and the mixed GSV-index of $v$ coincide, hence, the mixed GSV-index is indeed a generalization of the GSV-index. We prove a stronger result.
\begin{proposition}
\label{prop:goodElectionComplex}
Let $f\colon \C^n \to \C$ be a mixed function with isolated critical point at the origin. Let $v$ be a continuous complex vector field on $V_f$ with isolated singularity at the origin. Then, the vector field $\nabla{g}$ is frame homotopic to $v$.
\end{proposition}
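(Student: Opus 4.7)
The plan is to produce an explicit homotopy in $V(3,2n)$ between the real $3$-frames $(v,\nabla g,\nabla h)$ and $(\nabla g,v,iv)$ over $V_f^{*}$ (and hence over $L_f$). Since $v$ is a complex vector field, Corollary~\ref{cor:complex.vf} combined with~\eqref{eq:nabla.d} yields $\inpr[\C^n]{\nabla g}{v}=\inpr[\C^n]{\nabla h}{v}=0$, so taking real and imaginary parts the four vectors $v$, $iv$, $\nabla g$, $\nabla h$ are pairwise real-orthogonal in $\R^{2n}$ with the single possible exception of the pair $(\nabla g,\nabla h)$. Since $\mathbf{0}$ is an isolated critical point of $f$, the vectors $\nabla g$ and $\nabla h$ are $\R$-linearly independent on $V_f^{*}$, so the real subspace
\begin{equation*}
W(\mathbf{z})\;:=\;\C v(\mathbf{z})\ \oplus\ \mathrm{span}_{\R}\{\nabla g(\mathbf{z}),\nabla h(\mathbf{z})\}
\end{equation*}
has real dimension $4$, and both $3$-frames of interest lie inside $W(\mathbf{z})$.

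Applying continuous Gram-Schmidt to $(v,iv,\nabla g,\nabla h)$ produces an orthonormal basis $E_1=v/\|v\|$, $E_2=iv/\|v\|$, $E_3=\nabla g/\|\nabla g\|$, $E_4=(\nabla h-\alpha\nabla g)/\|\nabla h-\alpha\nabla g\|$ of $W(\mathbf{z})$, where $\alpha=\inpr[\R^{2n}]{\nabla h}{\nabla g}/\|\nabla g\|^{2}$; the denominator in $E_4$ never vanishes because $\nabla g$ and $\nabla h$ are $\R$-linearly independent. Positive rescaling in each slot, together with the substep replacing $\nabla h$ by $\nabla h-\alpha\nabla g$ (whose component transverse to $\mathrm{span}(v,\nabla g)$ never vanishes), continuously deforms $(v,\nabla g,\nabla h)$ through $3$-frames in $W$ to the orthonormal frame $(E_1,E_3,E_4)$. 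The second frame $(\nabla g,v,iv)$ is already pairwise $\R$-orthogonal, and positive rescaling reduces it to $(E_3,E_1,E_2)$.

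It remains to connect $(E_1,E_3,E_4)$ to $(E_3,E_1,E_2)$ inside $V(3,4)$. Let $R(\theta)\in SO(W)$, for $\theta\in[0,\pi]$, be the rotation by angle $\theta$ in the real $2$-plane $P(\mathbf{z})=\mathrm{span}_{\R}\bigl(\tfrac{1}{\sqrt{2}}(E_1-E_3),\tfrac{1}{\sqrt{2}}(E_2-E_4)\bigr)$ extended by the identity on $P(\mathbf{z})^{\perp}\cap W$. A direct check shows $R(\pi)$ swaps $E_1\leftrightarrow E_3$ and $E_2\leftrightarrow E_4$, so the family $\theta\mapsto(R(\theta)E_1,R(\theta)E_3,R(\theta)E_4)$ carries $(E_1,E_3,E_4)$ to $(E_3,E_1,E_2)$ through orthonormal $3$-frames in $W$, varying continuously in $\mathbf{z}$. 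Concatenating the three homotopies yields the desired homotopy in $V(3,2n)$. The main technical burden is bookkeeping: verifying at each stage that the triple is a genuine $3$-frame, which reduces to the non-vanishing of $v$, $\nabla g$, and $\nabla h-\alpha\nabla g$ on $V_f^{*}$, together with the orthonormality of $(E_1,E_2,E_3,E_4)$ in the rotation stage.
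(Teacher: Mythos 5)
Your argument is correct, and it rests on the same key input as the paper's proof, namely the complex orthogonality $\inpr[\C^n]{\nabla g(\mathbf{z})}{v(\mathbf{z})}=\inpr[\C^n]{\nabla h(\mathbf{z})}{v(\mathbf{z})}=0$ supplied by Corollary~\ref{cor:complex.vf}, which is what makes $v$ and $iv$ real-orthogonal to $\nabla g$ and $\nabla h$; but the homotopy itself is assembled differently. The paper first assumes, up to homotopy, that $v,\nabla g,\nabla h$ are pairwise $\R$-orthogonal and then performs four successive straight-line interpolations, each replacing one slot of the $3$-frame,
$(v,\nabla g,\nabla h)\to(v,\nabla g,iv)\to(v,\nabla h,iv)\to(\nabla g,\nabla h,iv)\to(\nabla g,v,iv)$,
each step justified by a one-line orthogonality check. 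You instead keep the possible non-orthogonality of $\nabla g$ and $\nabla h$ explicit, orthonormalize inside the $4$-dimensional subspace $W(\mathbf{z})=\C v(\mathbf{z})\oplus\mathrm{span}_\R\{\nabla g(\mathbf{z}),\nabla h(\mathbf{z})\}$ (using the $\R$-linear independence of $\nabla g$ and $\nabla h$ on $V_f^*$, which holds because the critical point is isolated), and then join the two resulting orthonormal frames by a single rotation by $\pi$ in the plane spanned by $E_1-E_3$ and $E_2-E_4$; I checked that this rotation indeed swaps $E_1\leftrightarrow E_3$ and $E_2\leftrightarrow E_4$ and varies continuously with $\mathbf{z}$. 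What your route buys is self-containedness: the Gram--Schmidt step replaces the paper's unexplained ``up to homotopy we can assume orthogonality'', and the single rotation makes the continuity and the frame condition transparent at every stage. What the paper's route buys is brevity: each of its four steps is a one-line verification. The only addition I would make to yours is to say explicitly that the identity $\inpr[\C^n]{\nabla g}{v}=0$ you record at the outset is precisely the complex-orthogonality clause required by Definition~\ref{def:BuenaEleccion}, so that both conditions of ``frame homotopic'' are accounted for.
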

\begin{proof}
Up to homotopy, we can assume that $v(\mathbf{z}), \nabla{g}(\mathbf{z})$ and $\nabla{h}(\mathbf{z})$ are orthogonal over $\R$. The proof is done in four steps.
\paragraph{\textbf{Step one}} Consider the homotopy
\begin{align*}
    L_1 \colon V_f^* \times I &\xrightarrow{\hspace*{2cm}}  V(3,2n)\\
    (\mathbf{z},t) &\mapsto \bigl(v(\mathbf{z}),\nabla{g}(\mathbf{z}), (1-t)\nabla{h}(\mathbf{z})+tiv(\mathbf{z})\bigr).
\end{align*}
By Corollary~\ref{cor:complex.vf} the vectors $\nabla{g}$ and $\nabla{h}$ are orthogonal to $v$ and $iv$. Therefore, for any $(\mathbf{z},t)$ in $V_f^* \times I$ the vector
\begin{equation*}
    (1-t)\nabla{h}(\mathbf{z})+tiv(\mathbf{z}),
\end{equation*}
is never zero and it is orthogonal to $v(\mathbf{z})$ and $\nabla{g}(\mathbf{z})$ . Therefore, the real frames
\begin{equation}
\label{eq:Equiv1}
    \bigl(v(\mathbf{z}), \nabla{g}(\mathbf{z}),\nabla{h}(\mathbf{z})\bigr) \quad \text{and} \quad \bigl(v(\mathbf{z}),\nabla{g}(\mathbf{z}),iv(\mathbf{z})\bigr),
\end{equation}
are equivalent.

\paragraph{\textbf{Step two}} Define the following homotopy
\begin{align*}
    L_2 \colon V_f^* \times I &\xrightarrow{\hspace*{2cm}}  V(3,2n)\\
    (\mathbf{z},t) &\mapsto \bigl(v(\mathbf{z}), (1-t)\nabla{g}(\mathbf{z})+t\nabla{h}(\mathbf{z}),iv(\mathbf{z})\bigr).
\end{align*}
By an analogous argument to \textbf{Case one}, we get that the real frames
\begin{equation}
\label{eq:Equiv2}
    \bigl(v(\mathbf{z}),\nabla{g}(\mathbf{z}),iv(\mathbf{z})\bigr) \quad \text{and} \bigl(v(\mathbf{z}), \nabla{h}(\mathbf{z}),iv(\mathbf{z})\bigr),
\end{equation}
are equivalent.

\paragraph{\textbf{Step three}} Let $L_3$ be the homotopy given by
\begin{align*}
    L_3 \colon V_f^* \times I &\xrightarrow{\hspace*{2cm}}  V(3,2n)\\
    (\mathbf{z},t) &\mapsto \bigl((1-t)v(\mathbf{z})+t\nabla{g}(\mathbf{z}),\nabla{h}(\mathbf{z}), iv(\mathbf{z})\bigr).
\end{align*}
By an analogous argument to \textbf{Case one}, we get that the real frames
\begin{equation}
\label{eq:Equiv2}
    \bigl(v(\mathbf{z}),\nabla{h}(\mathbf{z}), iv(\mathbf{z})\bigr) \quad \text{and} \quad \bigl(\nabla{g}(\mathbf{z}),\nabla{h}(\mathbf{z}), iv(\mathbf{z})\bigr),
\end{equation}
are equivalent.
\paragraph{\textbf{Step four}} Define the homotopy $L_4$ by
\begin{align*}
    L_4 \colon V_f^* \times I &\xrightarrow{\hspace*{2cm}}  V(3,2n)\\
    (\mathbf{z},t) &\mapsto \bigl(\nabla{g}(\mathbf{z}),(1-t)\nabla{h}(\mathbf{z})+tv(\mathbf{z}), iv(\mathbf{z})\bigr).
\end{align*}
By an analogous argument to \textbf{Case one}, we get that the real frames
\begin{equation}
\label{eq:Equiv3}
    \bigl(\nabla{g}(\mathbf{z}),\nabla{h}(\mathbf{z}), iv(\mathbf{z})\bigr) \quad \text{and} \quad \bigl(\nabla{g}(\mathbf{z}),v(\mathbf{z}), iv(\mathbf{z})\bigr).
\end{equation}
are equivalent. By~\eqref{eq:Equiv1},~\eqref{eq:Equiv2} and~\eqref{eq:Equiv3}the real frames
\begin{equation*}
    \bigl(v(\mathbf{z}), \nabla{g}(\mathbf{z}),\nabla{h}(\mathbf{z})\bigr) \quad \text{and} \quad \bigl(\nabla{g}(\mathbf{z}),v(\mathbf{z}), iv(\mathbf{z})\bigr),
\end{equation*}
are equivalent. This proves the proposition.
\end{proof}

\begin{corollary}
\label{cor:MGSVequalGSV}
Let $f\colon \C^n \to \C$ be a holomorphic function with isolated critical point at the origin. Let $v$ be a continuous vector field on $V_f$ with isolated singularity at the origin.
Then the classical GSV-index and the mixed GSV-index of $v$ coincide, that is,
\begin{equation*}
\Ind[mGSV](v,\mathbf{0})=\Ind[GSV](v,\mathbf{0}).
\end{equation*}
\end{corollary}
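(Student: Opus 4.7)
The plan is to exploit the machinery just developed, in particular Proposition~\ref{prop:goodElectionComplex}, so that the corollary becomes a matter of assembling already-proved facts.

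First, I would observe that when $f$ is holomorphic, $\bar{\mathrm{d}}f\equiv 0$, so by Corollary~\ref{cor:complex.vf} (equivalence of \eqref{it:cvf} and \eqref{it:eqs.holo}) every continuous vector field $v$ on $V_f$ is automatically a complex vector field. In particular the hypothesis of Proposition~\ref{prop:goodElectionComplex} is met, and therefore $v^{\perp} := \nabla g$ is a legitimate choice of frame-homotopic perpendicular vector field for $v$. By Lemma~\ref{lema:IndependenciaV} the mixed GSV-index is independent of the choice of $v^{\perp}$, so I may compute
\begin{equation*}
\Ind[mGSV](v,\mathbf{0}) = \deg(\phi_v), \qquad \phi_v(\mathbf{z}) = \bigl(\nabla g(\mathbf{z}),\, v(\mathbf{z})\bigr).
\end{equation*}

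Next, I would invoke Remark~\ref{rem:holo.ng=nf}: for holomorphic $f$ one has $\nabla g(\mathbf{z}) = \overline{\nabla f(\mathbf{z})}$. Substituting, the defining map for the mixed GSV-index becomes
\begin{equation*}
\phi_v(\mathbf{z}) = \bigl(\overline{\nabla f(\mathbf{z})},\, v(\mathbf{z})\bigr) \colon L_f \to W(2,n),
\end{equation*}
while the classical GSV-index of \eqref{eq:GSV-map.complex} is $\deg(\tilde{\phi}_v)$ with
\begin{equation*}
\tilde{\phi}_v(\mathbf{z}) = \bigl(v(\mathbf{z}),\, \overline{\nabla f(\mathbf{z})}\bigr).
\end{equation*}
These two maps differ only by a transposition of the frame entries, which is precisely the situation covered by Lemma~\ref{lemma:CambioW}, giving $\deg(\phi_v) = \deg(\tilde{\phi}_v)$ and hence $\Ind[mGSV](v,\mathbf{0}) = \Ind[GSV](v,\mathbf{0})$.

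There is essentially no obstacle here: all three ingredients (every vector field is complex when $f$ is holomorphic, $\nabla g = \overline{\nabla f}$ in the holomorphic case, and the swap invariance of the degree on $W(2,n)$) have already been established. The only point needing a sentence of care is justifying that Proposition~\ref{prop:goodElectionComplex} applies: one needs $v$ to be a complex vector field in the sense of the paper, which follows from Corollary~\ref{cor:complex.vf} since the second equation in \eqref{eq:holo} is vacuous for a holomorphic $f$ and the first reduces to the tangency condition $\langle \overline{\nabla f(\mathbf{z})}, v(\mathbf{z})\rangle_{\C^n} = 0$ already guaranteed by $v$ being tangent to $V_f$.
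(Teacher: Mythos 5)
Your proof is correct and follows essentially the same route as the paper's: the authors likewise note that every vector field on $V_f$ is complex when $f$ is holomorphic, apply Proposition~\ref{prop:goodElectionComplex} to take $v^{\perp}=\nabla g$, and conclude via Remark~\ref{rem:holo.ng=nf} (which already packages the identity $\nabla g=\overline{\nabla f}$ together with the frame-swap Lemma~\ref{lemma:CambioW}). Your version simply spells out the steps the paper leaves implicit.
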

\begin{proof}
Since $f$ is holomorphic, then $v$ is a complex vector field. By Proposition~\ref{prop:goodElectionComplex} we have that the map $\nabla{g}$ is frame homotopic to $v$. The proof follows by Remark~\ref{rem:holo.ng=nf}
\end{proof}

\subsection{The multi-index}
The classical GSV-index is for vector fields with isolated singularity at the origin on complex analytic varieties. Then, the link of $V$ is necessarily connected, unless $V$ has complex dimension $1$.
This does not happen in the real case, given a mixed function $f\colon\C^n\to\C$ it may happen that $V_f$ has several irreducible components and we need to consider vector field on them with isolated singularity.
Following \cite[Section~2]{Aguilar-Seade-Verjovsky:IVFTIRAS}, this yields to the definition of multi-index.

\begin{definition}
Let $f\colon \C^n \to \C$ be a mixed function with isolated critical point at the origin. Let $v$ be a continuous vector field on $V_f$ with isolated singularity at the origin.
Let $L_1,\dots,L_r$ be the connected components of the link $L_f$. We define the \textit{mixed multi-index} $\Ind[mGSV]^{\mathrm{multi}}(v,\mathbf{0})$ of the vector field $v$ by
\begin{equation*}
\Ind[mGSV]^{\mathrm{multi}}(v,\mathbf{0}):=(\deg(\phi_{v_1}),\dots,\deg(\phi_{v_r})),
\end{equation*}
where $\phi_{v_j}$ is the restriction of $\phi_v$ to the component $L_j$. Notice that one has
\begin{equation*}
\Ind[mGSV](v,\mathbf{0})=\deg(\phi_{v_1})+\dots+\deg(\phi_{v_r}).
\end{equation*}
\end{definition}

\section{Applications}
\label{Applications}
In this section we study the relation between the Mixed GSV-index and some classical invariants like the real GSV-index, the Poincar\'e-Hopf index and the curvatura integra.

\subsection{Mixed GSV-index vs real GSV-index}
In this subsection we prove that the real GSV-index is the reduction modulo two of the mixed GSV-index. 
\begin{theorem}
\label{theorem.soniguales}
Let $f\colon \C^n \to \C$ be a mixed function with isolated critical point at the origin and $v$ a continuous vector field on $V_f$ with isolated singularity at the origin. Then,
\begin{equation*}
\Ind[\R GSV](v,\mathbf{0}) = \Ind[mGSV](v,\mathbf{0}) \mod 2.
\end{equation*}
\end{theorem}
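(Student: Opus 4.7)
The plan is to bridge the two indices via the map $\Phi\colon W(2,n)\to V(3,2n)$ introduced in \eqref{eq:Phi}, which sends $(v_1,v_2)\mapsto(v_1,v_2,iv_2)$. Both indices are degrees of maps defined on the same link $L_f$, a closed oriented $(2n-3)$-manifold, and by Propositions~\ref{proposition:mixedGSVobstruction} and~\ref{prop:Vkm.smn} each of those maps factors up to homotopy through a map to the common sphere $\mathbb{S}^{2n-3}$. So the strategy is to show that these two spherical reductions are homotopic; then the mod $2$ relation between the indices is automatic from the remarks that identify their degrees with the degrees of the spherical lifts.

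First I would fix any vector field $v^{\perp}$ frame homotopic to $v$, whose existence is guaranteed by Lemma~\ref{lema.generalj}, and consider the two relevant maps side by side: the mixed GSV map $\phi_v=(v^{\perp},v)\colon L_f\to W(2,n)$ and the real GSV map $\tilde{\varphi}_v=(v,\nabla g,\nabla h)\colon L_f\to V(3,2n)$. Computing the composition with $\Phi$ gives $\Phi\circ\phi_v(\mathbf{z})=(v^{\perp}(\mathbf{z}),v(\mathbf{z}),iv(\mathbf{z}))$, and the very definition of frame homotopy (Definition~\ref{def:BuenaEleccion}) yields a homotopy in $V(3,2n)$ between $\tilde{\varphi}_v$ and $\Phi\circ\phi_v$.

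Next I would apply Proposition~\ref{prop:Vkm.smn} to both $\tilde{\varphi}_v$ and $\Phi\circ\phi_v$, writing $\tilde{\varphi}_v\simeq\tilde{\gamma}\circ\hat{\varphi}_v$ and $\Phi\circ\phi_v\simeq\tilde{\gamma}\circ\widehat{\Phi\circ\phi_v}$ for spherical maps $\hat{\varphi}_v,\widehat{\Phi\circ\phi_v}\colon L_f\to\mathbb{S}^{2n-3}$ that are unique up to homotopy. The homotopy from the previous paragraph, combined with this uniqueness, forces $\hat{\varphi}_v\simeq\widehat{\Phi\circ\phi_v}$. Lemma~\ref{lem:fi.fi} then identifies $\widehat{\Phi\circ\phi_v}$ with $\hat{\phi}_v$ up to homotopy, so $\hat{\varphi}_v\simeq\hat{\phi}_v$ and therefore $\deg(\hat{\varphi}_v)=\deg(\hat{\phi}_v)$.

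The conclusion is then assembled from Remarks~\ref{rmk:deg.deg} and~\ref{rmk:deg2.deg}: the mixed GSV-index equals $\deg(\phi_v)=\deg(\hat{\phi}_v)$, while the real GSV-index equals $\deg_2(\tilde{\varphi}_v)=\deg(\hat{\varphi}_v)\bmod 2$, and combining these gives the desired congruence. I do not expect any serious obstacle: the argument is essentially bookkeeping that transports the frame homotopy through $\Phi$ and packages together several results already established in the paper. The most delicate step is invoking the uniqueness clause in Proposition~\ref{prop:Vkm.smn} to turn a homotopy in $V(3,2n)$ into a homotopy between spherical lifts, after which Lemma~\ref{lem:fi.fi} is precisely the mechanism that links that spherical lift to $\hat{\phi}_v$.
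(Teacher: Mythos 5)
Your argument is correct and follows essentially the same route as the paper: both hinge on composing $\phi_v$ with $\Phi$, using the frame homotopy to identify $\Phi\circ\phi_v$ with $\tilde{\varphi}_v$ in $V(3,2n)$, and comparing the two indices through their common spherical reduction $L_f\to\mathbb{S}^{2n-3}$ (the paper does this via the first-vector identifications of Remarks~\ref{rmk:deg.v1} and~\ref{rmk:deg2.v1} together with the diagram underlying Lemma~\ref{lem:fi.fi}, which is exactly the uniqueness-of-lifts mechanism you invoke).
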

\begin{proof}
Let $L_f$ be the link of $f$ at $0$. Up to homotopy we can assume that ${v(\mathbf{z}),\nabla{g}(\mathbf{z}),\nabla{h}(\mathbf{z})}$ are orthonormal over $\R$. We have two continuous maps (given in~\eqref{eq:mGSV.map} and~\eqref{eq:GSV-map.real}),
\begin{align*}
 \phi_v\colon &L_f \to W(2,n)  &\tilde{\varphi}_v \colon& L_f \to V(3,2n),\\
 &\mathbf{z} \mapsto (v^{\perp}(\mathbf{z}),v(\mathbf{z})) &&\mathbf{z} \mapsto (v(\mathbf{z}),\nabla{g}(\mathbf{z}),\nabla{h}(\mathbf{z}))
\end{align*}
The real, and the mixed GSV-index of $v$ are given by
\begin{equation*}
\Ind[mGSV](v,\mathbf{0})=\deg(\phi_v), \quad \text{and} \quad \Ind[\R GSV](v,\mathbf{0})=\deg_2(\tilde{\varphi}_v).
\end{equation*}
Consider the map defined in \eqref{eq:Phi},
\begin{align*}
    \Phi \colon W(2,n) &\to V(3,2n)\\
    (v_1,v_2) &\mapsto (v_1,v_2,iv_2).
\end{align*}
Thus, we have the map
\begin{align*}
    \Gamma:=\Phi\circ{}\phi_v  \colon L_f &\to V(3,2n)\\
    \mathbf{z} &\mapsto (v^{\perp}(\mathbf{z}),v(\mathbf{z}),iv(\mathbf{z})).
\end{align*}
Since $v^{\perp}$ is frame homotopic to $v$, the real $3$-frames
\begin{equation*}
(v(\mathbf{z}),\nabla{g}(\mathbf{z}),\nabla{h}(\mathbf{z})) \quad \text{and} \quad  (v^{\perp}(\mathbf{z}),v(\mathbf{z}),iv(\mathbf{z})).
\end{equation*}
are homotopic. Therefore, by Proposition~\ref{prop:Vkm.smn} and Remark~\ref{rmk:deg2.v1}
\begin{equation}\label{ed:degreeGamma}
    \deg_2(\Gamma) = \deg(v^{\perp})\mod 2 = \deg(v)\mod 2 =\deg_2(\tilde{\varphi}_v)= \Ind[\R GSV](v,\mathbf{0}).
\end{equation}
In order to finish the proof, we need to determine the degree of $\Gamma$. For this, notice the following diagram
 \begin{equation}
 \label{eq:DiagramadeGamma}
\begin{tikzpicture}
  \matrix (m)[matrix of math nodes,
    nodes in empty cells,text height=2ex, text depth=0.25ex,
    column sep=3.5em,row sep=3em] {
    & L_f\\
    \mathbb{S}^{2n-3}\cong W(1,n-1) & W(2,n) \\
    \mathbb{S}^{2n-3}\cong V(1,2n-2) & V(3,2n) \\};
    \draw[-stealth] (m-1-2) edge node[auto]{$\phi_v$} (m-2-2);
    \draw[-stealth] (m-1-2) edge node[above]{$v^\perp$} (m-2-1);
\draw[-stealth] (m-2-1) edge node[auto]{$\gamma$} (m-2-2);
\draw[-stealth] (m-2-2) edge node[auto]{$\Phi$} (m-3-2);
\draw[-stealth] (m-2-1) edge node[auto]{$=$} (m-3-1);
\draw[-stealth] (m-3-1) edge node[above]{$\tilde{\gamma}$} (m-3-2);
\end{tikzpicture}
\end{equation}
By~\eqref{eq:DiagramadeGamma}, Proposition~\ref{proposition:mixedGSVobstruction} and Proposition~\ref{prop:Vkm.smn},
\begin{equation*}
    \deg(\phi_v) = \deg(v^\perp) \quad \text{and} \quad \deg_2(\Gamma)=\deg_2(\Phi \circ{} \phi_v) = \deg(v^\perp) \mod 2.
\end{equation*}
Therefore,
\begin{equation}
\label{ed:degreeGamma2}
    \deg_2(\Gamma)= \deg(\phi_v) \mod 2 = \Ind[mGSV](v,\mathbf{0}) \mod 2.
\end{equation}
By~\eqref{ed:degreeGamma} and~\eqref{ed:degreeGamma2} we get
\begin{equation*}
   \Ind[\R GSV](v,\mathbf{0})= \Ind[mGSV](v,\mathbf{0}) \mod 2.
\end{equation*}
This proves the theorem.
\end{proof}
\subsection{Mixed GSV-index and the Poincar\'e-Hopf index}
In this section we study the relation with the Poincar\'e-Hopf index of $v$ in a Milnor fiber. For this, let $f\colon \C^n \to \C$ be a mixed function with isolated critical point at the origin.
Let $\mathbb{B}^{2n}_\epsilon$ be a closed ball with center at the origin $\mathbf{0}$ in $\C^n$ of sufficiently small radius $\epsilon>0$.  By the Implicit Function Theorem there exists $\delta$ with 
$0<\delta\ll\epsilon$ such that all the fibers over the open disk $\mathbb{D}_\delta\subset\C$ centered at $0$ of radius $\delta$ are transverse to $\mathbb{S}_\epsilon=\partial\mathbb{B}^{2n}_\epsilon$.
Then, by the Ehresmann Fibration Theorem for manifolds with boundary, the restriction
\begin{equation*}
 f|\colon f^{-1}(\mathbb{D}_\delta\setminus\{0\})\cap\mathbb{B}^{2n}_\epsilon\to \mathbb{D}_\delta\setminus\{0\},
\end{equation*}
is a fiber bundle \cite{Milnor:ISH}, \cite[\S11]{Milnor:SPCH} called the Milnor-L\^e Fibration or fibration on the Milnor tube.
Let $t\in\mathbb{D}_\delta\setminus\{0\}$, the fiber $F_t$ over $t$ is given by
\begin{equation*}
    F_t := f^{-1}(t) \cap \mathbb{B}^{2n}_\epsilon.
\end{equation*}
Since $t$ is a regular value of $f$ near $0$, the Milnor fiber $F_t$ is a $(2n-2)$-manifold with boundary, and its boundary $\partial F_t$ is diffeomorphic to the link $L_f$.

Let $v$ be a continuous vector field on $V_f=f^{-1}(0)$ with isolated singularity at the origin.
By~\cite[Lemma~2.9]{Aguilar-Seade-Verjovsky:IVFTIRAS} we can extend $v$ into a non-singular vector field, also denoted by $v$, defined in a neighborhood of the boundary $\partial F_t$ of $F_t$. By~\cite[Theorem~1.1.2]{Brasselet-etal:VFSV}, we can extend $v$ to a vector field on the whole fiber $F_t$ with only one singular point, say $P$, in the interior of $F_t$. The local Poincar\'e-Hopf index at $P$ is independent of the way how we extend $v$ to the interior of $F_t$, and this number is the Poincar\'e-Hopf index of $v$ in $F_t$ denoted by $\Ind[PH](v,F_t)$. 

Using the previous work we have the following theorem.
\begin{theorem}
\label{theorem.sonigualesPH}
Let $f\colon \C^n \to \C$ be a mixed function with isolated critical point at the origin and $v$ a continuous vector field on $V_f$ with isolated singularity at the origin. Then,
\begin{equation*}
\Ind[mGSV](v,0) = \Ind[PH](v,F_t),
\end{equation*}
where $\Ind[PH](v,F_t)$ is the local Poincar\'e–Hopf index of $v$ on a Milnor fibre $F_t$.
\end{theorem}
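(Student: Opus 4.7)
The plan is to realise both indices as the integer degree of a single lifted map $L_f\to\mathbb{S}^{2n-3}$, bringing together the obstruction-theoretic machinery of Propositions~\ref{proposition:mixedGSVobstruction} and~\ref{prop:Vkm.smn} with the frame-homotopy of Definition~\ref{def:BuenaEleccion}. As in the discussion preceding the theorem, I fix an extension of $v$ to a continuous vector field on $F_t$, still denoted $v$, with a unique isolated zero $P$ in the interior of $F_t$, and let $B\subset F_t$ be a small closed ball around $P$.

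The first step is to compute $\Ind[PH](v,F_t)$ as a degree on $\partial F_t\cong L_f$. Because $F_t$ is a regular level set of $f=g+ih$, the pair $(\nabla g,\nabla h)$ is everywhere linearly independent on $F_t$ and trivialises the normal bundle of $F_t$ in $\R^{2n}$. After orthonormalisation, the triple
\begin{equation*}
\tilde{\varphi}_v=(v,\nabla g,\nabla h)\colon F_t\setminus\{P\}\to V(3,2n)
\end{equation*}
is well defined, and an adapted version of Proposition~\ref{prop:Vkm.smn} (using that $V(2,2n)$ is $(2n-3)$-connected by \eqref{eq:pii.wkm}) produces a continuous lift $\hat{\tilde{\varphi}}_v\colon F_t\setminus\{P\}\to\mathbb{S}^{2n-3}$ satisfying $\tilde\gamma\circ\hat{\tilde{\varphi}}_v\simeq\tilde{\varphi}_v$. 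On the small boundary sphere $\partial B$, the lift $\hat{\tilde{\varphi}}_v$ is by construction the normalised vector field $v/\|v\|$ read in a local trivialisation of $TF_t$ compatible with the normal framing (compare Remark~\ref{rmk:deg2.v1}), so its integer degree is exactly the Poincar\'e--Hopf index $\Ind[PH](v,P)=\Ind[PH](v,F_t)$. The cobordism $F_t\setminus\mathrm{int}(B)$, being the source of a map into $\mathbb{S}^{2n-3}$, then forces
\begin{equation*}
\deg\bigl(\hat{\tilde{\varphi}}_v|_{\partial F_t}\bigr)=\deg\bigl(\hat{\tilde{\varphi}}_v|_{\partial B}\bigr)=\Ind[PH](v,F_t).
\end{equation*}

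The second step is to match $\deg(\hat{\tilde{\varphi}}_v|_{\partial F_t})$ with $\Ind[mGSV](v,\mathbf{0})$ using the frame-homotopy. On $L_f$ the $3$-frames $\tilde{\varphi}_v=(v,\nabla g,\nabla h)$ and $\Phi\circ\phi_v=(v^\perp,v,iv)$ are homotopic by Definition~\ref{def:BuenaEleccion}, so by the uniqueness clause of Proposition~\ref{prop:Vkm.smn} their lifts to $\mathbb{S}^{2n-3}$ are homotopic; Lemma~\ref{lem:fi.fi} further identifies the lift $\widehat{\Phi\circ\phi_v}$ with the lift $\hat\phi_v$ of $\phi_v\colon L_f\to W(2,n)$. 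Combining these facts with Remark~\ref{rmk:deg.deg},
\begin{equation*}
\Ind[PH](v,F_t)=\deg\bigl(\hat{\tilde{\varphi}}_v|_{\partial F_t}\bigr)=\deg(\widehat{\Phi\circ\phi_v})=\deg(\hat\phi_v)=\deg(\phi_v)=\Ind[mGSV](v,\mathbf{0}),
\end{equation*}
which is the desired equality.

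The main obstacle is that the dimension $2n-3$ of $L_f$ is precisely the one in which $V(3,2n)$ only detects the parity of degrees, so the integer $\Ind[PH](v,F_t)$ cannot be read off the real $3$-frame $\tilde{\varphi}_v$ directly. The integer information is carried by the complex structure, i.e., by the lift through $W(2,n)$, which is why the frame-homotopy is crucial. The technical heart of the argument is ensuring that the cobordism argument on $F_t\setminus\mathrm{int}(B)$ can be carried out in $\mathbb{S}^{2n-3}$ rather than merely in $V(3,2n)$, or equivalently, that a continuous lift $\hat{\tilde{\varphi}}_v$ exists on the non-compact manifold $F_t\setminus\{P\}$ and restricts compatibly on both boundary components; verifying this is where the substance of the proof lies.
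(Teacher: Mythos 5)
Your cobordism step --- extending $v$ over $F_t$, excising a ball around the zero $P$, and equating the degrees of the restrictions to the two boundary components --- is exactly the second half of the paper's proof. The gap is in how you extract the \emph{integer} $\Ind[mGSV](v,\mathbf{0})$ on $\partial F_t$. You lift the real $3$-frame $\tilde{\varphi}_v=(v,\nabla g,\nabla h)$ through the fibration $\mathbb{S}^{2n-3}\xrightarrow{\tilde{\gamma}}V(3,2n)\to V(2,2n)$ and argue that, since $\tilde{\varphi}_v\simeq\Phi\circ\phi_v$ in $V(3,2n)$, ``by the uniqueness clause of Proposition~\ref{prop:Vkm.smn} their lifts are homotopic.'' But $\tilde{\gamma}_*$ is only the reduction modulo $2$ (Remark~\ref{rmk:deg2.deg}; see \eqref{eq:pii.wkm} with $m-k=2n-3$ odd), so two maps $L_f\to\mathbb{S}^{2n-3}$ whose compositions with $\tilde{\gamma}$ are homotopic need only have the same degree mod~$2$: a homotopy of $3$-frames inside $V(3,2n)$ controls the degree of a lift only up to parity. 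What your argument actually delivers is $\deg(\hat{\tilde{\varphi}}_v|_{\partial F_t})\equiv\Ind[mGSV](v,\mathbf{0})\pmod 2$, i.e.\ a version of Theorem~\ref{theorem.soniguales}, not the integer identity. This loss of information is precisely the obstruction that makes the real GSV-index only $\Z_2$-valued, and it is why the paper's proof never lifts the real $3$-frame over the Milnor fibre: it stays in the complex Stiefel manifold, reorders the complex $2$-frame to $\check{\phi}_v=(v,v^{\perp})$ via Lemma~\ref{lemma:CambioW}, and uses Remark~\ref{rmk:deg.v1} --- where $\gamma_*$ is an isomorphism on $\pi_{2n-3}(W(2,n))\cong\Z$ --- to identify $\Ind[mGSV](v,\mathbf{0})$ with the honest integer degree of $v\colon L_f\to\mathbb{S}^{2n-3}$; only then is the degree-zero boundary argument applied, directly to the normalized extension $\hat{v}$ on $F_t\setminus\mathring{\mathbb{D}}^{2n-2}_{\eta}$. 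Note also that, unlike in Lemma~\ref{lema:IndependenciaV}, your frame $(v,\nabla g,\nabla h)$ does not arise from a complex $2$-frame via $\Phi$, so there is no complex lift available to resolve the mod-$2$ ambiguity.

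A secondary weak point is the claim that on $\partial B$ the lift ``is by construction'' the normalised vector field read in a trivialisation of $TF_t$, hence has degree $\Ind[PH](v,F_t)$: the lift was produced by abstract obstruction theory, and on the sphere $\partial B$ a lift through $\tilde{\gamma}$ is again determined only up to even degree, so this identification requires choosing the explicit candidate first and verifying that it is a lift. Both defects disappear if you carry the integer through $W(2,n)$ as the paper does: read off $\deg(\phi_v)=\deg(v)$ from the reordered complex frame and apply the boundary-degree argument to $\hat{v}$ itself.
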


\begin{proof}
Let $v^\perp$ be any vector field frame homotopic to $v$. Thus, for any $\mathbf{z}\in V_f^*$,  $(v^\perp(\mathbf{z}), v(\mathbf{z}))$ is a complex $2$-frame in $\C^n$ and up to homotopy it can be assumed to be orthonormal.
Restricting this $2$-frame to the link $L_f$ we get the continuous map given in \eqref{eq:mGSV.map}
\begin{align*}
 \phi_v\colon L_f&\to W(2,n)\\
 \mathbf{z}&\mapsto (v^\perp(\mathbf{z}),v(\mathbf{z})),
\end{align*}
which defines the mixed GSV-index, that is, $\Ind[mGSV](v,0)=\deg(\phi_v)$.
Consider the map
\begin{align*}
 \check{\phi}_v\colon L_f&\to W(2,n)\\
 \mathbf{z}&\mapsto (v(\mathbf{z}),v^\perp(\mathbf{z})).
\end{align*}
By Lemma~\ref{lemma:CambioW} we have 
\begin{equation}\label{eq2:check.phi}
\deg(\check{\phi}_v)=\deg(\phi_v)=\Ind[mGSV](v,0).
\end{equation}
Consider fibration \eqref{eq:fib.gen} with $k=2$ and $m=n$, i.~e., $\mathbb{S}^{2n-3}\xrightarrow{\gamma} W(2,n)\xrightarrow{p} W(1,n)$.
By Proposition~\ref{proposition:mixedGSVobstruction} and Remark~\ref{rmk:deg.v1} the map
\begin{equation*}
v\colon L_f\to \mathbb{S}^{2n-3},
\end{equation*}
is such that $\gamma\circ v=\check{\phi}_v$ and we have that
\begin{equation}\label{eq:deg.v}
\deg(v)=\deg(\check{\phi}_v)=\Ind[mGSV](v,0).
\end{equation}

As we mentioned above, we can extend the vector field $v$ on $L_f$ to a vector field $\hat{v}$ on a Milnor fiber $F_t$, with only one singularity at a point $P$ in the interior of $F_t$.
Let $\mathbb{D}^{2n-2}_\eta$ be a small disk in $F_t$ centered at $P$ of radius $\eta>0$
and $\mathbb{S}^{2n-3}_\eta$ its boundary sphere. Then  $W:=F_t\setminus\mathring{\mathbb{D}}^{2n-2}_{\eta}$, where $\mathring{\mathbb{D}}^{2n-2}_{\eta}$ denotes the interior of $\mathbb{D}^{2n-2}_\eta$,
is an oriented $(2n-2)$-manifold with boundary $\partial W=L_f\sqcup \mathbb{S}^{2n-3}_\eta$, where $\mathbb{S}^{2n-3}_\eta$ has the reverse orientation of the usual one.
Hence, restricting $\hat{v}$ to $\partial W$ we get a map
\begin{equation*}
 \partial\hat{v}\colon \partial W\to W(2,n),
\end{equation*}
whose restriction to $L_f$ is $v$, that is, we have
\begin{equation}\label{eq:p.hat.v}
(\partial\hat{v})|_{L_f}=v.
\end{equation}
Since $\hat{v}$ is defined without zeros on $W$, by \cite[Proposition, p.~110]{Gullemin-Pollack:TopDiff} or \cite[Lemma~1, p.~28]{Milnor:TopDiff} the degree of $\partial\hat{v}$ is zero and we have that
\begin{equation*}
0=\deg(\partial\hat{v})=\deg(\partial\hat{v}|_{L_f})-\deg(\partial\hat{v}|_{\mathbb{S}^{2n-3}_\eta}),
\end{equation*}
thus, by \eqref{eq:p.hat.v} and \eqref{eq:deg.v} we have
\begin{equation*}
\Ind[PH](v,F_t)=\deg(\partial\hat{v}|_{\mathbb{S}^{2n-3}_\eta})=\deg(\partial\hat{v}|_{L_f})=\deg(v)=\Ind[mGSV](v,0).
\end{equation*}
This proves the theorem.
\end{proof}

\subsection{Curvatura integra}
In this subsection we give the relation between the mixed GSV-index and the curvatura integra of a mixed function $f$ which satisfies the strong Milnor condition.
We start recalling the definition of the curvartura integra defined by Kervaire in~\cite{M1956}.
Let $M$ be a $m$-dimensional closed manifold embedded in $\R^n$ with trivial normal bundle. A trivialization of this bundle defines, up to homotopy, a continuous map
\begin{equation*}
    \varphi \colon M \to V(n-m,n).
\end{equation*}
The map $\varphi$ induces an homomorphism in homology
\begin{equation}
\label{eq:CurvaturaIntegra.1}
    \varphi_* \colon H_{m}(M;\mathbb{Z}) \to H_{m}(V(n-m,n);\mathbb{Z}).
\end{equation}
If $m$ is even, the image  $\varphi(M)$ of the fundamental class of $M$ under $\varphi_*$ is an integer, or  an integer modulo $2$  if $m$ is odd, which in either case is called the \emph{curvatura integra} of $M$.

In \cite{Cisneros-Grulha-Seade:OTRAM} Cisneros-Molina, Seade and Grulha defined the curvatura integra of certain real analytic maps, based on Kervaire's curvatura integra.
We recall this definition for the case we are interested. Firstly, we need some definitions. 
\begin{definition}
Let $f\colon \C^n \to \C$ be a mixed function with an isolated critical point. The \emph{canonical pencil} of $f$ is a family $V_\ell$ of real analytic varieties, parametrized by the real projective space $\mathbb{R}\mathbb{P}^{1}$, defined as follows: for each line $\ell$ through the origin in $\C$ we define
\begin{equation*}
    V_\ell := \set{\mathbf{z} \in \C^n}{f(\mathbf{z}) \in \ell}.
\end{equation*}
which is a real analytic variety of dimension $2n-1$.
\end{definition}

\begin{remark}\label{rmk:Ll}
Notice that the union of all $V_\ell$ is the whole ambient space $\C^n$ and their intersection is $f^{-1}(0)$. Moreover, since $f$ has an isolated singularity at the origin, then
each $V_\ell$ is non-singular away from $\mathbf{0}$. Hence, the link of $V_\ell$, which we denote by $L_\ell$, is a smooth submanifold of $\C^n\cong\R^{2n}$ of real dimension $2n-2$, embedded with
trivial normal bundle.
\end{remark}

\begin{definition}
Let $f\colon \C^n \to \C$ be a mixed function with an isolated critical point at $\mathbf{0}$. We say that $f$ has \emph{the strong Milnor property} if for every $\epsilon>0$ (small enough) the map:
\begin{equation*}
    \frac{f}{\|f\|}\colon \mathbb{S}_\epsilon \setminus L_f \to \mathbb{S}^1,
\end{equation*}
is a locally trivial fibration. See \cite[Proposition~3.2]{Cisneros-Seade-Snoussi:d-regular} for a characterization of the strong Milnor property in terms of $d$-regularity.
\end{definition}

\begin{definition}
Let $f\colon \C^n \to \C$ be a mixed function with an isolated critical point, having the strong Milnor property. Let $V_\ell$ be an element in the canonical pencil of $f$. Let $L_\ell$ be the link of $V_\ell$, and by Remark~\ref{rmk:Ll} a trivialization of its normal bundle defines, up to homotopy a map
\begin{equation}\label{eq:phi.Ll}
\varphi\colon L_\ell\to V(2,2n).
\end{equation}
Then, the \emph{curvatura integra} of $f$, denoted by $\Psi(f)$, is defined as:
\begin{equation*}
    \Psi(f):= \varphi(L_\ell)=\varphi_*([L_\ell])
\end{equation*}
where $\varphi(L_\ell)$ is Kervaire's curvatura integra of $L_\ell$ defined as the image of the fundamental class $[L_\ell]$ of $L_\ell$ under the homomorphism $\varphi_*$ induced in homology by \eqref{eq:phi.Ll}.
\end{definition}
The following corollary shows the relation between the curvatura integra and the mixed GSV-index.
\begin{corollary}
\label{cor:CurvaturaIntegra}
Let $f\colon \C^n \to \C$ be a mixed function with an isolated critical point, having the strong Milnor property. Let $v$ be a continuous vector field on $V_f$ such that $v$ is everywhere transverse to the link of $f$. Then,
\begin{equation*}
    \Psi(f)=\Ind[mGSV](v,\mathbf{0}).
\end{equation*}
\end{corollary}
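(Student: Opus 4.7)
The approach is to reduce to Theorem~\ref{theorem.sonigualesPH} via the Poincaré–Hopf index on a Milnor fibre. That theorem gives $\Ind[mGSV](v,\mathbf{0})=\Ind[PH](v,F_t)$, so it suffices to prove
\begin{equation*}
\Psi(f)=\Ind[PH](v,F_t).
\end{equation*}
The geometric bridge is the strong Milnor property: for a chosen line $\ell=e^{i\theta}\mathbb{R}$, the two fibres of $f/\|f\|\colon\mathbb{S}^{2n-1}_\varepsilon\setminus L_f\to\mathbb{S}^1$ over $\pm e^{i\theta}$ close up along $L_f$ to yield two copies $F^+,F^-$ of the closed Milnor fibre, and $L_\ell=F^+\cup_{L_f}F^-$ is the double of $F$ along its boundary $L_f$.

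First I would extend $v$ across $L_\ell$. Because $v$ is transverse to $L_f$, on each component of $L_f$ it points outward from exactly one of $F^\pm$; after relabelling the two sides component by component I may assume $v$ points outward from $F^+$ everywhere on $L_f$. Applying the extension procedure from the proof of Theorem~\ref{theorem.sonigualesPH} on each side then produces a continuous vector field $\tilde v$ on the closed $(2n-2)$-manifold $L_\ell$ vanishing only at two interior points $P^\pm\in\mathring F^\pm$, each of local index $\Ind[PH](v,F_t)$. The classical Poincaré–Hopf theorem on $L_\ell$ therefore gives
\begin{equation*}
\chi(L_\ell)=2\,\Ind[PH](v,F_t).
\end{equation*}

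Second I would compute $\Psi(f)$ from the same geometry. Writing $e^{-i\theta}f=g_\ell+ih_\ell$ so that $V_\ell=h_\ell^{-1}(0)$, the normal bundle of $L_\ell$ in $\mathbb{R}^{2n}$ is trivialised, after Gram–Schmidt, by the pair $(\mathbf{z}/\|\mathbf{z}\|,\nabla h_\ell)$, which is the framing used in the definition of $\Psi(f)$. By Proposition~\ref{prop:Vkm.smn} and Remark~\ref{rmk:deg2.v1} the degree of the resulting map $\varphi\colon L_\ell\to V(2,2n)$ equals the degree of a lift $\hat\varphi\colon L_\ell\to\mathbb{S}^{2n-2}$, which I would identify up to homotopy with the intrinsic Gauss map of $L_\ell$ as a closed even-dimensional hypersurface in $\mathbb{S}^{2n-1}_\varepsilon$. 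Hopf's theorem then supplies $\deg\hat\varphi=\chi(L_\ell)/2$, and combining the two equalities yields $\Psi(f)=\Ind[PH](v,F_t)=\Ind[mGSV](v,\mathbf{0})$.

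The main obstacle is this last identification, namely verifying that the obstruction-theoretic lift of Proposition~\ref{prop:Vkm.smn} really coincides, up to homotopy, with the intrinsic Gauss map of $L_\ell$ in $\mathbb{S}^{2n-1}_\varepsilon$. An alternative route, more in keeping with the paper's framework, would bypass Hopf's theorem altogether by producing a frame homotopy in the sense of Lemma~\ref{lema.generalj} between $(\mathbf{z},\nabla h_\ell)$ and a framing whose second vector is $\tilde v/\|\tilde v\|$ off $P^\pm$, and then reading $\deg\varphi$ directly as the total Poincaré–Hopf index of $\tilde v$ on $L_\ell$. In either version of the argument the transversality of $v$ to $L_f$ is indispensable: it is precisely the condition that lets $\tilde v$ be extended continuously across the separating hypersurface $L_f$ and that controls the required rotation of the normal framing.
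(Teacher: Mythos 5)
Your skeleton coincides with the paper's: both proofs first invoke Theorem~\ref{theorem.sonigualesPH} to replace $\Ind[mGSV](v,\mathbf{0})$ by $\Ind[PH](v,F_t)$, and then identify both $\Ind[PH](v,F_t)$ and $\Psi(f)$ with $\chi(F_t)$. The difference is that the paper obtains these last two identities purely by citation --- $\Ind[PH](v,F_t)=\chi(F_t)$ from \cite[Theorem~3.3]{Aguilar-Seade-Verjovsky:IVFTIRAS} (this is where the transversality hypothesis enters) and $\Psi(f)=\chi(F_t)$ from \cite[Theorem~4]{Cisneros-Grulha-Seade:OTRAM} --- whereas you attempt to re-derive both from the doubling picture $L_\ell=F^+\cup_{L_f}F^-$. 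Your re-derivation is essentially how those cited results are proved, and the first half (extending $v$ across the separating hypersurface $L_f$ and applying Poincar\'e--Hopf on the closed manifold $L_\ell$ to get $\chi(L_\ell)=2\,\Ind[PH](v,F_t)$) is sound once one grants that the two fibres of $f/\Vert f\Vert$ over $\pm e^{i\theta}$ close up along $L_f$ into copies of the Milnor fibre $F_t$; that identification is not automatic and is exactly what the strong Milnor property (equivalently, $d$-regularity and the resulting equivalence between the tube fibration and the sphere fibration) supplies, so it deserves an explicit citation rather than being treated as part of the definition of the pencil.

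The step you flag as the main obstacle is, however, a genuine gap rather than a technicality: $L_\ell$ has codimension $2$ in $\R^{2n}$, so $\varphi$ lands in $V(2,2n)$ and the equality $\deg(\hat{\varphi})=\chi(L_\ell)/2$ is not the classical Hopf theorem for the Gauss map of a hypersurface but Kervaire's theorem \cite{M1956} that the curvatura integra of an even-dimensional closed manifold with trivialized normal bundle equals half its Euler characteristic. There is no ``intrinsic Gauss map'' argument that avoids this; that identity is precisely the content outsourced by the paper to \cite[Theorem~4]{Cisneros-Grulha-Seade:OTRAM}. If you insert a citation to Kervaire (or to \cite{Cisneros-Grulha-Seade:OTRAM}) at that point, your argument closes and becomes equivalent to the paper's three-line citation chain, with the bonus that it re-proves \cite[Theorem~3.3]{Aguilar-Seade-Verjovsky:IVFTIRAS} along the way.
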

\begin{proof}
Let $F_t$ be a Milnor fiber of $f$. By Theorem~\ref{theorem.sonigualesPH} we have
\begin{equation*}
    \Ind[PH](v,F_t) = \Ind[mGSV](v,\mathbf{0}).
\end{equation*}
By~\cite[Theorem~3.3]{Aguilar-Seade-Verjovsky:IVFTIRAS} and~\cite[Theorem~4]{Cisneros-Grulha-Seade:OTRAM}  we have
\begin{equation*}
    \Ind[PH](v,F_t) = \chi(F_t) \quad \text{and}\quad\Psi(f) = \chi(F_t).
\end{equation*}
This proves the corollary. 
\end{proof}
\bibliography{matart,mypapers,mymatbk,matbook,nuevo}
\bibliographystyle{plain}
\end{document}